\allowdisplaybreaks \numberwithin{equation}{section}
\theoremstyle{plain}
\newtheorem{theorem}{Theorem}[section]
\newtheorem{proposition}[theorem]{Proposition}
\newtheorem{lemma}[theorem]{Lemma}
\newtheorem{corollary}[theorem]{Corollary}
\theoremstyle{definition}
\newtheorem{definition}[theorem]{Definition}
\newtheorem{remark}[theorem]{Remark}
\def \de {\mathrm{d}}
\def \e {\epsilon}
\def \B {\mathbb B}
\def \D {\mathbb D}
\def \Om {\Omega}
\def \la {\lambda}
\def \X {X}
\def \Y {Y}
\def \E {\mathcal D}
\def \D {\mathcal E}
\def \z {\psi}
\title[On Blaschke-Santal\'o diagrams]{On Blaschke-Santal\'o diagrams for the torsional rigidity and the first Dirichlet eigenvalue}
\author{Ilaria Lucardesi}
\address[Ilaria Lucardesi]{Universit\'e de Lorraine, CNRS, IECL, F-54000 Nancy, France}
\email{ilaria.lucardesi@univ-lorraine.fr}
\author{Davide Zucco}
\address[Davide Zucco]{Dipartimento di Matematica ``Giuseppe Peano'', Universit\`a di Torino, Via Carlo Alberto 10, 10123 Torino, Italy}
\email{davide.zucco@unito.it}
\begin{document}
\maketitle

\begin{abstract} We study Blaschke-Santal\'o diagrams associated to the torsional rigidity and the first eigenvalue of the Laplacian with Dirichlet boundary conditions. We work under convexity and volume constraints, in both strong (volume exactly one) and  weak (volume at most one) form. We discuss some topological (closedness, simply connectedness) and geometric (shape of the boundaries, slopes near the point corresponding to the ball) properties of these diagrams, also providing a list of conjectures.
\end{abstract}

{\small

\medskip
\noindent {\textbf{Keywords:} shape optimization, Blaschke-Santal\'o diagram, torsional rigidity, first Dirichlet eigenvalue.}

\medskip
\noindent{\textbf{MSC 2010:} 
49Q10, %Optimization of shapes other than minimal surfaces, 
49R05, %Variational methods for eigenvalues of operators (should also be assigned at least one other classification number in Section 49)
35P15, % Estimates of eigenvalues in context of PDEs 
35J25% Boundary value problems for second-order elliptic equations
}
}
\bigskip

\section{Introduction}

Given two shape functionals $\X$ and $\Y$ defined on a class $\mathcal A$ of sets of $\mathbb R^N$, the corresponding \emph{Blaschke-Santal\'o diagram} is the following region of the plane:
$$
 \left\{ (x,y) \in \mathbb R^2\ :\ \text{$\exists \,\Omega\in \mathcal A$ with $x=\X(\Omega)$ and $y=\Y(\Omega)$}\right\},
$$
namely the range of the vector map $\Omega \mapsto (\X(\Omega), \Y(\Omega))$ over the shapes $\Omega$ in $\mathcal A$. For this reason, the diagram is also referred to as \emph{attainable set}. Notice that the map $\Omega \mapsto (\X(\Omega), \Y(\Omega))$ is in general not injective, since different shapes could be associated to the same point.

Typically, the class $\mathcal A$ encodes some constraints, that prevent the diagram to be trivial (e.g., the whole plane, a whole quadrant, a line, or a half-line). They can be either bounds, or prescribed values, for some quantities (such as volume, perimeter, diameter, inradius), or geometric and topological restrictions (such as convexity or simple connectedness).

The goal is to identify the attainable set, in particular its boundary and the shapes {associated to the points} on it. A complete description would amount to characterize the relations between the shape functionals $X$ and $Y$, by means of (optimal) upper and lower bounds in terms of the boundary points of the diagram and the associated shapes. Since shape functionals and their bounds appear in several mathematical areas (e.g., Poincar\'e inequalities in functional analysis), Blaschke-Santal\'o diagrams are very useful tools, and the literature on the subject is quite vast (see for instance \cite{AH, VDBBV, BHT,Bla, BNP, BBF,  MZ, Santa} and more recently \cite{VDBBP,FL}). As it appears from the literature, the theoretical analysis, even if very fine, is in general not enough for an accurate description, and some aspects remain unsolved. Often, conjectures are supported by numerical simulations.

In this paper we study the Blaschke-Santal\'o diagram corresponding to the first eigenvalue of the Dirichlet Laplacian and to the torsional rigidity, under volume and convexity constraints. 
Given an open bounded set $\Omega$ of $\mathbb R^N$, the \emph{first Dirichlet eigenvalue} $\la_1(\Om)$ and the \emph{torsional rigidity} $T(\Omega)$ are defined as follows:
\begin{equation}\label{minmax}
\lambda_1(\Omega) := \min_{u\in H^1_0(\Omega)\setminus\{0\}} \frac{\int_{\Omega}|\nabla u(x)|^2dx}{\int_{\Omega}|u(x)|^2dx} \quad
\text{ and } \quad
T(\Omega):=\max_{u\in H^1_0(\Omega)\setminus\{0\}}\frac{\big(\int_{\Omega} u(x) dx \big) ^ 2}{\int_{\Omega}|\nabla u(x)|^2dx}.
\end{equation}
It is well-known that these minimum and maximum are achieved, respectively, by the so-called {\it first eigenfunction} $\varphi_\Omega$ and {\it torsion function} $w_\Omega$. These functions are unique up to a multiplicative constant, therefore, in this paper we choose to work with the first eigenfunction $\varphi_\Omega$ normalized in $L^2(\Omega)$, such that $\lambda(\Omega)=\int_\Omega|\nabla \varphi_\Omega|^2$,  and with the torsion function $w_\Omega$ such that $T(\Omega)=\int_\Omega w_\Omega=\int_\Omega|\nabla w_\Omega|^2$.
Notice also that they are weak solutions in $\Omega$ of the following PDEs:
$$
-\Delta \varphi_\Omega = \lambda_1(\Omega) \varphi_\Omega \quad\text{ and } \quad  -\Delta w_\Omega = 1,
$$
with zero boundary condition on $\partial \Omega$. 
Our aim is to characterize the Blaschke-Santal\'o diagram when $\X=\lambda_1$ and $\Y=T^{-1}$ over the class $\mathcal A$ of convex sets with unit volume: 
$$
\E:=
\left\{ (x,y)\in \mathbb R^2\ :\ \text{$\exists\, \Omega \subset \mathbb R^N$ convex, $|\Omega|=1$, with $x=\lambda_1(\Omega)$ and $y=T(\Omega)^{-1}$}\right\},
$$
where $|\cdot|$ denotes the $N$-dimensional Lebesgue measure. The choice of pairing $\lambda_1$ with the inverse of $T$ (instead of $T$) is natural: as it is clear from \eqref{minmax}, they share many properties, e.g., they are both monotonically decreasing with respect to set inclusion and they are homogeneous with negative indeces.
Actually, the volume constraint can be removed, up to enclosing it into the shape functionals: since $\lambda_1$ is $-2$-homogeneous and $T$ is $(N+2)$-homogeneous, we have
$$
\E=\{ (x,y)\in \mathbb R^2\ :\ \exists\, \Omega \subset \mathbb R^N\ \hbox{convex},\ x=\lambda_1(\Omega)|\Omega|^{2/N},\ y=T(\Omega)^{-1}|\Omega|^{(N+2)/N}\}.
$$
In this paper, we also address the variant with volume constraint in a \emph{weak form}:
$$
\D:=\left\{ (x,y)\in \mathbb R^2\ :\ \text{$\exists\, \Omega \subset \mathbb R^N$ convex, $|\Omega|\leq1$, with $x=\lambda_1(\Omega)$ and  $y=T(\Omega)^{-1}$} \right\},
$$
which clearly contains $\E$.

The classical inequalities (see, e.g. \cite{H, Heditor, PS})
\begin{align}
& T(\Omega) |\Omega|^{-(N+2)/N}\leq T(B) |B|^{-(N+2)/N} \qquad & (Saint-Venant)\label{SV}
\\
& \lambda_1(\Omega) |\Omega|^{2/N} \geq \lambda_1(B) |B|^{2/N}  \qquad & (Faber-Krahn)\label{FK}
\\
& T(\Omega) \lambda_1(\Omega) \leq  |\Omega| \qquad &  ({\hbox{{\it P\'olya}}})\label{P}
\\
& T(\Omega)^{2/(N+2)} \lambda_1(\Omega) \geq    T(B)^{2/(N+2)} \lambda_1(B) \qquad & (Kohler-Jobin)\label{KJ}
\end{align}
valid  for every open set $\Omega$ of $\mathbb R^N$ and for every ball $B$ of $\mathbb R^N$, define, in a natural way, 
a region $\mathcal R$ including 
the diagrams $\E$ and $\D$:
\begin{equation}\label{DeU}
\E\subset \D \subset \mathcal R:= \{(x,y)\in \mathbb R^2\ :\ y \geq  T(\B)^{-1},\  x \geq \lambda_1(\B),\  y\geq x, \ y\leq  	c_\B \, x^{(N+2)/2}\},
\end{equation}
where $c_\B\coloneqq1/[T(\B)\lambda_1(\B)^{(N+2)/2}]$ and $\B$ denotes the $N$ dimensional ball of unit volume. To fix the ideas, in Figure~\ref{figura1}, we plot the region $\mathcal R$ for $N=2$, where $ \lambda_1(\B)=\pi j_{0,1}^2 \sim 18$ ($j_{0,1}$ is the first zero of the Bessel function $J_0$), $T(\B)^{-1}=8\pi \sim 25$, and $c_\B \sim 0.077$.

The \emph{Kohler-Jobin curve} $\Gamma_{\mathbb B}:=\{y=c_\B \, x^{(N+2)/2} \ : \ x\geq \lambda_1(\mathbb B)\}$, corresponds to sets of volume less than or equal to 1 realizing the equality in the Kohler-Jobin inequality \eqref{KJ}, namely each point of this curve is \emph{uniquely} associated to a ball of volume less than or equal to one. 
The constant 1 in front of the volume in the P\'olya inequality \eqref{P} is optimal for generic sets, in the sense that it cannot be lowered: this is shown in \cite{VDBFNT} by taking a suitable sequence of perforated domains (\emph{\`a la} Cioranescu-Murat), whose first Dirichlet eigenvalues go to $+\infty$, whereas their torsional rigidities go to $0$. In other words, the bisector $y=x$ is approached asymptotically, by some points of the diagram whose horizontal and vertical components diverge.
These results, together with the fact that balls realize the equalities in \eqref{SV} and \eqref{FK}, imply that
$\Gamma_{\mathbb B}$ is the only piece of the boundary of $\mathcal R$ belonging to $\D$; this is a quite rough information. If we restrict ourselves to the set $\E$, the situation is even worse:  the only point of $\partial \mathcal R$ in the diagram $\E$ is the vertex $\mathbb V:=(\lambda_1(\B), T(\B)^{-1})$. However, for convex sets, there holds a \emph{reverse P\'olya inequality} $\lambda_1(\Omega) T(\Omega) \geq C_N |\Omega| $ for some dimensional constant  $C_N>0$ (this is explicitly determined in \cite[Theorem 1.4, formula (1.7)]{VDBFNT}). This translates into the following bound:
\begin{equation}\label{ubE}
\E \subset \{ (x,y) \in \mathbb R^2\ :\ y \leq x / C_N  \},
\end{equation}
which indeed states that the diagram is bounded from above by a linear function.

\begin{figure}[t]                                         
\begin{tikzpicture}[domain=0:10, scale=0.5]   

  \draw[->] (-0.2,0) -- (10,0) node[below] {$x$};
  \draw[->] (0,-0.3) -- (0,10) node[left] {$y$};
  \draw[draw=gray!50!white,fill=gray!50!white]
  plot[domain=2.5:9.3] (\x,{(\x)}) --
  plot[domain=3.5:1.8] (\x,{(0.77*(\x)^2)}); 
  \draw[-] (1.8,2.5) -- (2.5,2.5);
  \draw[domain=2.5:9.3]  plot (\x,{(\x)}) node[above] {\tiny $y=x$};
  \draw[domain=1.8:3.5]  plot (\x,{(0.77*(\x)^2)}) node[above] {\tiny $\qquad \quad y=c_\B x^{(N+2)/2}$};
  \draw[-, dashed, very thin] (1.8,0) node[below] {\tiny $\la_1(\B)$} -- (1.8,10) ;
  \draw[-, dashed, very thin] (0,2.5) node[left] {\tiny $T(\B)^{-1}$} -- (10,2.5);
  \fill[black] (1.8,2.5) circle (2.5pt) node[below left] {\tiny $\mathbb V$};

\end{tikzpicture}
\caption{The region $\mathcal R$ containing the Blaschke-Santal\'o diagrams $\D$ and $\E$.}\label{figura1}
\end{figure}
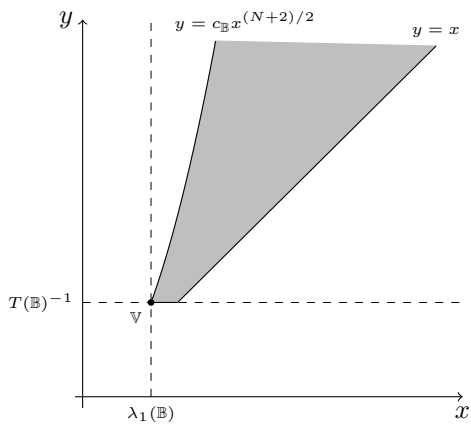

As shown above, the available results relating $\lambda_1$ and $T$ only allow to give some bounds on the diagrams. The challenging problem of completing the description motivates our study. In the following two theorems we summarize our results on the diagrams, under volume constraint in the strong and weak form.

\begin{theorem}[The diagram $\E$]\label{thmE}
There hold the following properties.
\begin{itemize}
\item[1.] \emph{(Topology).} The diagram $\E$ is closed, connected by arcs, and $\mathbb R^2\setminus \overline\E$ has only one unbounded connected component.
\item[2.] \emph{(Boundary).} The unbounded connected component of $\partial \E$ is the union of two curves 
which meet at the vertex $\mathbb V:=(\lambda_1(\B), T(\B)^{-1})$ and diverge to $+\infty$ as $x\to+\infty$. 
\item[3.] \emph{(Tangents at the vertex).} In dimension 2 the maximal and minimal slopes $\gamma^\pm$ at $\mathbb V$ (see \eqref{gammapm} for the definition) satisfy
\begin{equation}\label{eq.slopes}
{\gamma^+}=\frac{16}{j_{0,1}^2} \qquad\text{and}\qquad 0\leq\gamma^-\leq\frac{32}{j_{0,1}^2(j_{0,1}^2-2)},
\end{equation}
respectively.
\end{itemize}
\end{theorem}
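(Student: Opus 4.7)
The plan is to establish the three items separately, leveraging compactness, Hausdorff continuity, Minkowski interpolation, and second-order shape calculus around the ball.

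For item 1, closedness of $\E$ follows by the direct method: given $(\lambda_1(\Omega_n), T(\Omega_n)^{-1}) \to (x_*, y_*)$ with $\Omega_n$ convex of unit volume, the boundedness of $\lambda_1(\Omega_n)$ yields a uniform inradius bound and hence, together with unit volume and convexity, a uniform diameter bound; the Blaschke selection theorem provides a Hausdorff-convergent subsequence with convex limit $\Omega_*$ of unit volume, and Hausdorff continuity of $\lambda_1$ and $T$ on nondegenerate convex bodies gives $(\lambda_1(\Omega_*), T(\Omega_*)^{-1}) = (x_*, y_*) \in \E$. Arcwise connectedness follows by rescaled Minkowski interpolation: after translating $\Omega_0, \Omega_1$ so both contain the origin in their interior, the family $\Omega_t := ((1-t)\Omega_0 + t\Omega_1)/|(1-t)\Omega_0 + t\Omega_1|^{1/N}$ is continuous in the Hausdorff distance and produces a continuous arc in $\E$. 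For the unique-unbounded-component statement I would combine item 2 with the following structural observation: letting $f, g$ denote the lower and upper envelopes from item 2, the open set $\{y > g(x)\} \cup \{y < f(x)\} \cup \{x < \lambda_1(\B)\}$ is connected, unbounded, and contained in $\mathbb{R}^2 \setminus \E$; a constrained interpolation between two unit-volume convex sets with the same $\lambda_1$ value (Minkowski-combining them with a ball and rescaling to fix both $\lambda_1$ and the volume) then shows that the vertical slices $\E \cap (\{x\} \times \mathbb{R})$ are intervals, which prevents any additional unbounded component of $\mathbb{R}^2 \setminus \E$ inside the strip $\{f \leq y \leq g\}$.

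For item 2, set $f(x) := \min\{y : (x, y) \in \E\}$ and $g(x) := \max\{y : (x, y) \in \E\}$; these extrema are attained by closedness of $\E$ and have common domain $[\lambda_1(\B), \infty)$ since unit-area ellipses with varying eccentricity realize every value $\lambda_1 \geq \lambda_1(\B)$. Closedness of $\E$ gives lower semicontinuity of $f$ and upper semicontinuity of $g$, while the arcs from item 1 provide the opposite one-sided continuity, hence full continuity of both envelopes. The Pólya and reverse-Pólya inequalities \eqref{P}, \eqref{ubE} force $x \leq f(x) \leq g(x) \leq x/C_N$, so both envelopes diverge as $x \to \infty$. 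The Faber-Krahn equality case (the ball is the unique minimizer up to translation) forces $f(\lambda_1(\B)) = g(\lambda_1(\B)) = T(\B)^{-1}$, so the two envelope curves meet at $\mathbb{V}$ and escape to $+\infty$.

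For item 3, in dimension two I parametrize area-preserving smooth normal perturbations of $\mathbb{B}$ (radius $R = 1/\sqrt{\pi}$) by polar graphs $r(\theta) = R(1 + \epsilon h(\theta) + \epsilon^2 k(\theta) + O(\epsilon^3))$, with $h$ of zero mean and $k$ chosen to preserve area at order $\epsilon^2$. Since $\mathbb{B}$ is optimal in Faber-Krahn and Saint-Venant, the first-order shape derivatives of $\lambda_1$ and $T^{-1}$ vanish and the slope at $\mathbb{V}$ along the family is
\[
\frac{T(\Omega_\epsilon)^{-1} - T(\B)^{-1}}{\lambda_1(\Omega_\epsilon) - \lambda_1(\B)} = \frac{-T(\B)^{-2}\, T^{(2)}[h]}{\lambda_1^{(2)}[h]} + O(\epsilon),
\]
with $\lambda_1^{(2)}[h] \geq 0$ and $-T^{(2)}[h] \geq 0$ by optimality of the disk. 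Fourier-expanding $h$ in $(\cos k\theta, \sin k\theta)$ (modes $0, 1$ excluded by dilation- and translation-invariance), both quadratic forms diagonalize with explicit coefficients involving Bessel-function values at $j_{0,1}$. Mode-wise computations then yield the identity $\gamma^+ = 16/j_{0,1}^2$ (realized by the $k = 2$ family) and the upper bound $\gamma^- \leq 32/[j_{0,1}^2(j_{0,1}^2 - 2)]$ from an appropriate admissible perturbation, while $\gamma^- \geq 0$ is immediate from the sign of the ratio. The most delicate parts are expected to be the constrained-interpolation argument establishing that vertical slices of $\E$ are intervals (needed for item 1), and the explicit mode-wise second-order computation for $T$ in item 3, which requires solving an auxiliary Dirichlet problem on the disk and verifying that the Fourier-mode extrema actually coincide with the true slopes $\gamma^\pm$ of $\E$ at the vertex.
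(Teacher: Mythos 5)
Your items 1 (closedness, arcwise connectedness via normalized Minkowski interpolation) and the skeleton of item 2 (envelopes attained by compactness, divergence via P\'olya and \eqref{ubE}, coincidence at $\mathbb V$ via Faber--Krahn and Saint-Venant) match the paper. However, there are three genuine gaps. First, your route to the unique unbounded component of the complement rests on the claim that the vertical slices $\E\cap(\{x\}\times\mathbb R)$ are intervals, obtained by ``Minkowski-combining with a ball and rescaling to fix both $\lambda_1$ and the volume''. No such constrained one-parameter family is available (rescaling to unit volume changes $\lambda_1$, and nothing guarantees the $T^{-1}$-value sweeps the whole interval), and if the slices were intervals $\E$ would be simply connected --- which the paper explicitly lists as Open Problem 2. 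The paper's argument (Proposition \ref{noholes1}) is quantitative and different: for large $x_1$ it joins optimal sets for $L^+(x_1)$ and $L^-(x_1)$ by a normalized Minkowski arc, chooses the rigid motion that makes the inradius additive, and combines Brunn--Minkowski with the Hersh--Protter inequality \eqref{eq.HP} to show the entire arc stays in a half-plane $\{x\geq M(x_1)\}$ with $M(x_1)\to+\infty$, so it must cross any unbounded hole. Second, your claim of full continuity of the envelopes is an overreach: closedness gives u.s.c.\ of $L^+$ and l.s.c.\ of $L^-$, and the continuous Steiner symmetrization (which decreases both coordinates, and which you never invoke) gives continuity from the left, but none of your arcs yields right-continuity; the paper proves only these one-sided properties (Propositions \ref{prop-Lpiu} and \ref{prop-Lmeno}) and leaves continuity of $\Gamma^\pm$ open.

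In item 3 the mode attribution is reversed and the decisive global ingredient is missing. The ratio of the two diagonalized quadratic forms on the $m$-th Fourier mode is $r_m=16(m-1)/[j_{0,1}^2(1+j_{0,1}J_m'(j_{0,1})/J_m(j_{0,1}))]$; its minimum is at $m=2$, where $r_2=32/[j_{0,1}^2(j_{0,1}^2-2)]$, and this is what bounds $\gamma^-$ from above --- it is not the value of $\gamma^+$. The number $16/j_{0,1}^2$ is only the limit of $r_m$ as $m\to\infty$ and is attained by no mode, so the perturbative analysis alone gives $\gamma^+\geq 16/j_{0,1}^2$; the matching upper bound $\gamma^+\leq 16/j_{0,1}^2$ comes from the Kohler--Jobin inequality \eqref{KJ} (the diagram lies below $\Gamma_{\mathbb B}$, whose slope at $\mathbb V$ is $16/j_{0,1}^2$ in dimension $2$), which your outline never uses. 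Finally, to pass from slopes computed along smooth families to the quantities $\gamma^\pm$ defined in \eqref{gammapm} as extrema over \emph{all} sequences in $\mathcal A$ approaching the vertex, one needs the characterization of $\gamma^\pm$ through local optimality of $F_\gamma=T^{-1}-\gamma\lambda_1$ at the ball (Proposition \ref{gammameno}); this bridge is absent from your proposal, and without it the mode-wise computation only produces bounds from particular deformations rather than statements about $L^\pm$.
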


\begin{theorem}[The diagram $\D$]\label{thmD}
There hold the following properties.
\begin{itemize}
\item[1.] \emph{(Topology).} The diagram $\D$ is closed, simply connected, convex in the $x$-direction and convex in the $y$-direction.
\item[2.] \emph{(Boundary).} Its boundary $\partial \D$ is the union of two curves which meet only at the vertex $\mathbb V:=(\lambda_1(\B), T(\B)^{-1})$ and diverge to $+\infty$ as $x\to+\infty$. The boundary above the diagram $\D$ is the Kohler-Jobin curve $\Gamma_{\mathbb B}:=\{y=c_\B \, x^{(N+2)/2} \ : \ x\geq \lambda_1(\mathbb B)\}$, where $c_\B\coloneqq1/[T(\B)\lambda_1(\B)^{(N+2)/2}]$. The boundary below the diagram is a continuous increasing curve.
\item[3.] \emph{(Measure of shapes).} The measure of a shape $\Omega$ associated to a point $(x,y)\in\D$ is bounded below by 
$$
|\Omega| \geq \max\left\{\frac{\lambda_1(\B)}{x}, \left(\frac{1}{T(\B)y} \right)^{\frac{N}{N+2}}\right\}.
$$
\item[4.] \emph{(Tangents at the vertex).} In dimension 2 the maximal and minimal slopes at $\mathbb V$ are the same $\gamma^\pm$ found in \eqref{eq.slopes} of Theorem \ref{thmE}.
\end{itemize}
\end{theorem}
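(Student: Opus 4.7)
The main structural observation I would exploit is the scaling identity
\[
\D \;=\; \bigl\{ (c^2 x_0,\, c^{N+2} y_0)\,:\, (x_0, y_0) \in \E,\ c \geq 1 \bigr\},
\]
which follows by writing every admissible $\Omega'$ with $|\Omega'| \leq 1$ as $\Omega' = c^{-1}\Omega$ with $c := |\Omega'|^{-1/N} \geq 1$ and $|\Omega| = 1$, and noting that $\lambda_1(\Omega') = c^2\lambda_1(\Omega)$ and $T(\Omega')^{-1} = c^{N+2}T(\Omega)^{-1}$. Combined with Theorem \ref{thmE}, this identity drives all four items. Item~3 is a direct consequence: Faber--Krahn \eqref{FK} gives $|\Omega|^{2/N} \geq \lambda_1(\B)/x$ and Saint--Venant \eqref{SV} gives $|\Omega|^{(N+2)/N} \geq 1/(T(\B)y)$, and taking the larger of the two produces the bound.

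For item~2, the Kohler--Jobin inequality \eqref{KJ} rewrites as $y \leq c_\B x^{(N+2)/2}$ with equality iff $\Omega$ is a ball; since every $t\mathbb B$ with $t \in (0,1]$ is admissible, the curve $\Gamma_\B$ lies in $\D$ and realizes its upper boundary. For the lower boundary I would define $\mu(x) := \inf\{y : (x,y) \in \D\}$; by the scaling identity,
\[
\mu(x) \;=\; \inf\bigl\{ y_0 (x/x_0)^{(N+2)/2} \,:\, (x_0, y_0) \in \E,\ x_0 \leq x\bigr\}.
\]
Closedness of $\E$ and the reverse P\'olya bound \eqref{ubE} imply that the infimum is attained, that $\mu$ is continuous and strictly increasing on $[\lambda_1(\B), +\infty)$ with $\mu(\lambda_1(\B)) = T(\B)^{-1}$, and that $\mu(x) \to +\infty$ as $x \to +\infty$; the rigidity in \eqref{KJ} ensures that $\Gamma_\B$ and the graph of $\mu$ meet only at $\mathbb V$.

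For item~1, closedness follows by a standard compactness argument: given $(x_n, y_n) \to (x_\infty, y_\infty)$ with associated convex $\Omega_n$, Faber--Krahn provides a uniform lower bound on $|\Omega_n|$ and convexity together with $\lambda_1(\Omega_n) = x_n$ provides a uniform diameter bound (after translation), so Blaschke selection yields a Hausdorff-convergent subsequence whose nondegenerate limit $\Omega_\infty$ is convex with $|\Omega_\infty| \leq 1$; continuity of $\lambda_1$ and $T$ on nondegenerate convex bodies then gives $(x_\infty, y_\infty) \in \D$. Convexity of $\D$ in each axial direction follows from the scaling identity: horizontal and vertical slices are continuous images of connected subsets of $\E \times [1,+\infty)$, whose connectedness is inherited from the path-connectedness of $\E$ (Theorem \ref{thmE}) combined with the monotone structure of the scaling rays. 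Simple connectedness is then automatic from item~2, since $\D$ is the closed region bounded by two continuous graphs meeting only at $\mathbb V$ and diverging at infinity.

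For item~4, any point of $\D$ close to $\mathbb V$ has the form $(c^2 x_0, c^{N+2} y_0)$ with $(x_0, y_0) \in \E$ near $\mathbb V$ and $c$ near $1$. A direct computation shows that, in this joint limit, the chord slope from $\mathbb V$ to such a point interpolates monotonically between the chord slope from $\mathbb V$ to $(x_0, y_0) \in \E$ and the Kohler--Jobin slope $(N+2)T(\B)^{-1}/(2\lambda_1(\B))$, which for $N = 2$ equals $16/j_{0,1}^2 = \gamma^+$. Since chord slopes in $\E$ near $\mathbb V$ lie in $[\gamma^-, \gamma^+]$, so do the slopes in $\D$, giving $\gamma^\pm(\D) = \gamma^\pm(\E)$. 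The hardest step I anticipate is the fine analysis of the lower boundary $\mu$ in item~2 and the connectedness of the truncations $\E \cap \{x_0 \leq x\}$ used implicitly in both items~1 and~2, neither of which is directly covered by Theorem \ref{thmE}; both seem to require combining the path-connectedness of $\E$ with a careful transversality or monotonicity argument along the level sets $\{\lambda_1 = x\}$.
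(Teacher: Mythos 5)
Your scaling identity $\D=\{(c^2x_0,c^{N+2}y_0):(x_0,y_0)\in\E,\ c\geq1\}$ is exactly the mechanism the paper exploits (the Kohler--Jobin type curves $\Gamma_\Omega$ of \S\ref{sec-CSS}), and several pieces of your argument do go through: the closedness argument for item~1 is the content of Lemma~\ref{lemma.h}; the identification of the upper boundary and the strict separation from the lower one via the rigidity in \eqref{KJ} matches Proposition~\ref{prop43}; and your mediant computation for item~4 is precisely the estimate in Corollary~\ref{corollario}. (A small caveat on item~3: Faber--Krahn gives $|\Omega|\geq(\lambda_1(\B)/x)^{N/2}$, which equals the stated $\lambda_1(\B)/x$ only for $N=2$ and is weaker for $N>2$; the paper's own Remark~\ref{rem41} contains the same slip, so I do not count it against you.)

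The genuine gap is exactly where you flagged it, and it is not filled by the ingredients you invoke. You assert that closedness of $\E$ and the reverse P\'olya bound \eqref{ubE} make $\mu$ continuous and strictly increasing, and that the axial slices of $\D$ are connected because connectedness is ``inherited from the path-connectedness of $\E$ combined with the monotone structure of the scaling rays.'' Neither claim follows. Writing
\begin{equation*}
\mu(x)=x^{(N+2)/2}\,\inf\bigl\{\,y_0\,x_0^{-(N+2)/2}\ :\ (x_0,y_0)\in\E,\ x_0\leq x\,\bigr\},
\end{equation*}
the infimum is non-increasing in $x$ while the prefactor is increasing, so monotonicity of $\mu$ is not automatic; and the scaling rays only move up and to the right, so they cannot produce the down-left comparison needed when the minimizer realizing $\mu(x')$ has abscissa $x_0\in(x,x']$. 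Likewise, path-connectedness of $\E$ does not make the truncations $\E\cap\{x_0\leq x\}$ connected, since a connecting path could a priori leave that half-plane. The missing tool is the continuous Steiner symmetrization (Lemma~\ref{CSS-cvx} and Remark~\ref{rem-CSS}): for every convex $\Omega$ it yields a path \emph{inside the diagram} from $(\lambda_1(\Omega),T(\Omega)^{-1})$ to the point of the ball of the same volume along which \emph{both} coordinates decrease continuously and monotonically. This is what the paper uses to prove that $\widehat L^-$ is increasing and left-continuous (Proposition~\ref{prop-L2}, via the argument of Proposition~\ref{prop-Lmeno}; right-continuity then does come from your dilation rays), and it is what makes the separation argument of Proposition~\ref{prop-Dcvx} --- or, in your formulation, the connectedness of the truncated slices --- actually work. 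Without this monotone-path ingredient, items~1 and~2 of your proposal remain unproved.
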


\medskip

Recently, the same pair of shape functionals has been considered in \cite{VDBBP}, in which the authors investigate upper and lower bounds for functionals of the form $\lambda_1(\Omega)T(\Omega)^q$.

The plan of the paper is the following: in Section \ref{sec-prel} we fix some notation and we recall some tools of shape optimization, such as the Hausdorff metric, the continuous Steiner symmetrization, Minkowski sums, and shape derivatives. For the benefit of the reader, some of the proofs are postponed to the Appendix (Section \ref{sec-app}). In Section \ref{sec-E}, we study the diagram $\E$: the statement 1 of Theorem \ref{thmE} is proved in Propositions \ref{chiusoecpa} and \ref{noholes1}, the statement 2 in Propositions \ref{prop-Lpiu} and \ref{prop-Lmeno}, and the statement 3 in Proposition \ref{prop-valorislopes}.
Then, in Section \ref{sec-D}, we impose the inequality sign in the volume constraint, describing the diagram $\D$:  the statement 1 of Theorem \ref{thmD} is proved in Proposition \ref{prop-Dcvx}, the statement 2 in Propositions \ref{prop-L2} and \ref{prop43}, the statement 3 in Remark \ref{rem41}, and the statement 4 in Corollary \ref{corollario}.
The study led us to address some very deep questions, whose answer is beyond the scope of the present paper. We list them at the end of the paper, in Section \ref{sec-open}, together with some comments and conjectures.

\section{Preliminaries}\label{sec-prel}
In this section we fix some notation by recalling known facts that will be useful in the sequel.
In the proofs and in the technical parts, we write $\X(\Omega)$ and $\Y(\Omega)$ for $\lambda_1(\Omega)$ and $T(\Omega)^{-1}$, respectively. We say that a point $(x,y)$ in a Blaschke-Santal\'o diagram is associated to a set $\Omega$ when $X(\Omega)=\lambda_1(\Omega)=x$ and $Y(\Omega)=T(\Omega)^{-1}=y$.

\subsection{Hausdorff metric}\label{hausdorff.metric}

We endow the class of open convex sets with the \emph{Hausdorff complementary metric} (in short Hausdorff metric): the Hausdorff distance of two open sets is defined through the Hausdorff distance of their complements, which are closed sets (see \cite{HP}).
In the paper we will need the following well-known result.

\begin{lemma}\label{lemma.h}
Let $\{\Omega_n\}$ be a sequence of convex sets of $\mathbb R^N$ such that $\sup_n |\Omega_n|<+\infty$ and $\sup_n \lambda_1(\Omega_n)<+\infty$.
Then the following facts hold.
\begin{itemize}
\item[-] \emph{(Compactness).} There exists a convex set $\Omega$ of $\mathbb R^N$ such that, up to subsequences (that we do not relabel), $\Omega_n$ converges to $\Omega$ in the Hausdorff metric.
\item[-] \emph{(Continuity).} For the subsequence of the previous item there hold
\[
\lim_{n\to\infty} |\Omega_n|=|\Omega|,\quad  \lim_{n\to\infty} \lambda_1(\Omega_n)=\lambda_1(\Omega),\quad  \lim_{n\to\infty} T(\Omega_n)=T(\Omega).
\]
\end{itemize}

\end{lemma}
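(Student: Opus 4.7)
The plan is to first extract a uniform bound on the diameters of the $\Omega_n$, after which compactness reduces to Blaschke's selection theorem. I would begin with the classical inequality for convex sets in $\mathbb{R}^N$ relating the first Dirichlet eigenvalue to the inradius $\rho(\Omega_n)$, namely $\lambda_1(\Omega_n)\, \rho(\Omega_n)^2 \geq c_N$ (Hersch in dimension~$2$, Protter in general dimension). The hypothesis $\sup_n \lambda_1(\Omega_n) < +\infty$ thus translates into a uniform lower bound $\rho(\Omega_n) \geq \rho_0 > 0$. Combining this with the elementary geometric estimate $|\omega| \geq C_N\, \rho(\omega)^{N-1}\, \mathrm{diam}(\omega)$, which holds because any convex body contains the convex hull of an inscribed ball and of any diametric pair of its points, together with $\sup_n |\Omega_n| < +\infty$, I would conclude that $\sup_n \mathrm{diam}(\Omega_n) < +\infty$.

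Since $|\cdot|$, $\lambda_1$ and $T$ are translation invariant, I may replace each $\Omega_n$ by a translate having its incenter at the origin. All translated sets then lie in a common ball $B_R \subset \mathbb{R}^N$, and Blaschke's selection theorem produces, up to a further subsequence, a convex limit $\Omega \subset B_R$ in the Hausdorff metric, which on uniformly bounded sequences of closed convex sets coincides with the Hausdorff complementary metric introduced in Section~\ref{hausdorff.metric}. Continuity of the inradius for Hausdorff-converging convex bodies then yields $\rho(\Omega) \geq \rho_0 > 0$, so the limit is non-degenerate.

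For the continuity part, I would rely on well-known stability results in the convex setting. Volume is continuous under Hausdorff convergence of convex bodies contained in a common ball (e.g.\ by dominated convergence on the indicator functions, which converge pointwise almost everywhere thanks to convexity). For $\lambda_1$ and $T$, the uniform lower bound on $\rho(\Omega_n)$ provides a uniform exterior cone condition, ensuring the Mosco convergence of the spaces $H^1_0(\Omega_n)$ towards $H^1_0(\Omega)$; the continuity of both shape functionals then follows by standard $\Gamma$-convergence arguments applied to the Rayleigh quotients in \eqref{minmax}.

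The main obstacle is the diameter estimate: a sequence of convex sets with only a volume bound can escape to infinity or collapse (think of long thin slabs), so without the eigenvalue bound one cannot hope for Hausdorff compactness. The role of the hypothesis $\sup_n \lambda_1(\Omega_n) < +\infty$ is precisely to prevent such degeneration, via the inradius estimate above; once the diameter is under control, the remaining steps are standard tools from the shape-optimization toolbox.
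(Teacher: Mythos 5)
Your proposal is correct and follows essentially the same route as the paper: the key step in both is the uniform diameter bound obtained by combining the Hersch--Protter inequality (which turns the eigenvalue bound into a uniform lower bound on the inradius) with the elementary estimate $K_N\,\rho(\Omega)^{N-1}\,\mathrm{diam}(\Omega)\leq |\Omega|$, after which compactness and the continuity of $|\cdot|$, $\lambda_1$ and $T$ along Hausdorff-convergent sequences of uniformly bounded convex sets are standard results that the paper simply cites. The extra detail you supply on the continuity step (Mosco convergence of the $H^1_0$ spaces) is a correct unpacking of those cited results rather than a different argument.
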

\begin{proof}
First notice that by \eqref{KJ} we also have that $\sup_n T(\Omega_n)^{-1}<+\infty$.
To prove the lemma it is sufficient to show that $\sup_n \text{diam}(\Omega_n)<+\infty$, where $\text{diam}(\Omega_n)$ denotes the diameter of $\Omega_n$. If so, the family $\{\Omega_n\}$ turns out to be uniformly bounded and then compactness and continuity of volume, first eigenvalue, and torsional rigidity are well known, see \cite[Theorems 2.3.15 and 2.3.17]{H} or also \cite{BB,HP}.

In order prove that the family $\{\Omega_n\}$ is uniformly bounded, we fix a set $\Omega_n$ and use (a weak version of) the \emph{Hersh-Protter inequality} \cite{Hersh, P}, which provides a lower bound on the first eigenvalue $\lambda_1(\Omega_n)$ of a convex set in terms of a power of its inradius $\rho(\Omega_n)$:
\begin{equation}\label{eq.HP}
\lambda_1(\Omega_n) \geq \frac{\pi^2}{4\rho(\Omega_n)^2}.
\end{equation}
Moreover, by convexity, diameter and inradius give a lower bound on the volume: 
indeed, by considering the convex hull of a ball with radius $\rho(\Omega_n)$ and of a segment with length $\mathrm{diam}(\Omega_n)$, both contained into the convex set $\Omega_n$, we infer that 
\begin{equation}\label{stima-diam}
K_N\rho(\Omega_n)^{N-1} \mathrm{diam}(\Omega_n)\leq |\Omega_n|,
\end{equation}
where $K_N$ is a dimensional constant independent of $n$. By combining \eqref{eq.HP} with \eqref{stima-diam}, and taking the supremum with respect to $n$, we finally get a uniform bound on the diameters of the family $\{\Omega_n\}$, thanks to the hypothesis of the lemma.
\end{proof}

\subsection{Three continuous paths}\label{sec-CSS}

In this section we introduce three kinds of continuous paths joining pairs of points in the diagrams. Roughly speaking, starting from a continuous deformation of sets $t\mapsto \Omega_t$ from $\Omega_0$ to $\Omega_1$, we end up with a curve $t\mapsto (\X(\Omega_t), \Y(\Omega_t))$ in the diagram.

\medskip

The first deformation that we consider is the homotecy: given a bounded open set $\Omega$ of volume less than or equal to 1, all the homotecies $t\Omega$, with $0<t\leq |\Omega|^{-1/N}$, have still volume less than or equal to 1. For $t\in (0,1]$ we have compressions, whereas for $t\in [1, |\Omega|^{-1/N}]$ we have dilations. In particular, the set
$$
\Gamma_\Omega\coloneqq \left \{  (\X(t\Omega), \Y(t\Omega))\ :\ t\in \left(0,|\Omega|^{-1/N}\right]  \right \}
$$ 
is contained into the diagram $\D$.
Notice that, in view of the homogeneity of $\X(\cdot)$ and $\Y(\cdot)$ (of order $-2$ and $-(N+2)$, respectively), such set is a smooth curve whose explicit formula is
$$
\Gamma_\Omega=\left\{ y= \frac{\Y(\Omega)}{\X(\Omega)^{(N+2)/2}}x^{(N+2)/2}  \ :\ x\geq |\Omega|^{2/N}\lambda_1(\Omega) \right\}.
$$
Similarly, we define the portions of the curve associated to homotecies $t\Omega$, $0<t_1\leq t\leq t_2\leq |\Omega|^{-1/N}$:
$$
\Gamma_\Omega(t_1,t_2)\coloneqq \{ (\X(t\Omega), \Y(t\Omega))\ :\ t\in [t_1,t_2]  \}.
$$

\begin{remark}\label{rem-KJ} Notice that $\Gamma_{\Omega}$ is a portion of the curve $y=c x^{(N+2)/2}$, which is superlinear and passes through the origin. The coefficient $c$ depends on the shape and  is the same for homotetic sets, since $c={\Y(\Omega)}{\X(\Omega)^{-(N+2)/2}}$ is scale invariant. Moreover, it is easy to see that if $\X(\Omega_1)=\X(\Omega_2)$ and $\Y(\Omega_1)<\Y(\Omega_2)$, then $\Gamma_{\Omega_1}$ lies below $\Gamma_{\Omega_2}$.

When $\Omega=B$ is a ball, $\Gamma_B$ is nothing but the Kohler-Jobin curve $\Gamma_{\mathbb B}$. For this reason, in the sequel we will refer to these curves as of {\it Kohler-Jobin type}.
\end{remark}

\medskip

The second deformation that we introduce is the so-called {\it continuous Steiner symmetrization}.
Roughly speaking, as the name itself suggests, it is the continuous version of the ``classic'' Steiner symmetrization. For a detailed presentation, see \cite{B, BH} and the very recent paper \cite{BP}.
\begin{lemma}\label{CSS-cvx}
Let $\Omega$ be a convex set different from a ball, with $|\Omega|\leq 1$. Let $\phi_t$, $t\in [0,+\infty]$, be the continuous Steiner symmetrization which maps $\phi_0(\Omega)=\Omega$ into $\phi_\infty(\Omega)=|\Omega|^{1/N}\B$. Then, for every $t$, $\phi_t(\Omega)$ is convex, $|\phi_t(\Omega)|= |\Omega|$, and the functions
$$
t\mapsto \lambda_1(\phi_t(\Omega))\,,\quad t\mapsto T(\phi_t(\Omega))^{-1}
$$
are continuous, with respect to the Hausdorff metric, and decreasing.
\end{lemma}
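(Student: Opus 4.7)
The plan is to verify in turn the four assertions of the lemma---convexity, volume preservation, monotonicity, and continuity---most of which follow by assembling known properties of the continuous Steiner symmetrization (CSS) and combining them with Lemma~\ref{lemma.h}.

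First I would address convexity and volume. By construction, the CSS is built in \cite{B, BH, BP} as a limit/composition of elementary Steiner symmetrizations, each of which preserves the Lebesgue measure and maps a convex set to a convex set. Consequently $|\phi_t(\Omega)|=|\Omega|$ for every $t$, and the convex starting datum is preserved along the whole flow; this is the precise content for which one invokes the convex-sets version of the construction in \cite{BP}.

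Second, for the monotonicity, I would rely on the semigroup-type structure of CSS: for $0\leq s\leq t$, the set $\phi_t(\Omega)$ is obtained from $\phi_s(\Omega)$ by additional CSS steps. Since classical Steiner symmetrization does not increase $\lambda_1$ and does not decrease $T$ (Pólya--Szegő), these inequalities transfer to the continuous flow, yielding
\[
\lambda_1(\phi_t(\Omega)) \leq \lambda_1(\phi_s(\Omega)), \qquad T(\phi_t(\Omega))^{-1} \leq T(\phi_s(\Omega))^{-1},
\]
which is the desired monotonicity of both functions.

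Third, for continuity, I would combine two ingredients. By construction, $t \mapsto \phi_t(\Omega)$ is continuous in $L^1$ (convergence of characteristic functions). Moreover, the monotonicity just proved gives $\lambda_1(\phi_t(\Omega)) \leq \lambda_1(\Omega)$ uniformly in $t$, while the volumes are constant; hence, for any sequence $t_n \to t$, Lemma~\ref{lemma.h} provides Hausdorff precompactness of $\{\phi_{t_n}(\Omega)\}$, and the $L^1$ limit identifies the Hausdorff cluster point as $\phi_t(\Omega)$. Thus the whole family converges in the Hausdorff metric, and invoking once more the continuity statement of Lemma~\ref{lemma.h} gives the continuity of $t\mapsto \lambda_1(\phi_t(\Omega))$ and $t\mapsto T(\phi_t(\Omega))^{-1}$. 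The main obstacle is the preservation of convexity along the entire continuous flow: while each elementary Steiner symmetrization preserves convexity, controlling this property uniformly in $t$ is delicate and relies on the careful construction in \cite{BP}, which is the non-trivial input to be cited rather than reproved.
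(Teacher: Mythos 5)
The paper gives no proof of this lemma: it is stated as a known fact and delegated entirely to the references \cite{B,BH,BP}, so your proposal, which assembles the standard ingredients and defers the hard construction to \cite{BP}, is consistent with what the authors do. Your outline of convexity/volume preservation and of the continuity argument (uniform bound on $\lambda_1$ from monotonicity, constant volume, Hausdorff precompactness via Lemma~\ref{lemma.h}, identification of the cluster point through $L^1$ convergence, which for convex bodies of fixed positive volume is equivalent to Hausdorff convergence) is sound.

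One step, however, is justified incorrectly as written: the monotonicity. You argue that $\phi_t(\Omega)$ is obtained from $\phi_s(\Omega)$ by ``additional CSS steps'' and then invoke the fact that the \emph{classical} Steiner symmetrization does not increase $\lambda_1$ and does not decrease $T$. But the passage from $\phi_s(\Omega)$ to $\phi_t(\Omega)$ for $s<t$ is a \emph{partial} (intermediate-time) continuous symmetrization, not a full classical one, so the classical P\'olya--Szeg\H{o} inequality says nothing about it. What is needed is Brock's continuous P\'olya--Szeg\H{o} inequality, namely $\int|\nabla u^t|^2\le\int|\nabla u|^2$ for the continuous symmetrization $u\mapsto u^t$ at \emph{every} time $t$ (together with equimeasurability, which handles $\int u$ and $\int u^2$), combined with the semigroup property $\phi_t=\phi_{t-s}\circ\phi_s$. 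This is precisely the content of \cite{B}, and it is the correct citation here. Relatedly, since a single-direction symmetrization does not reach the ball, the flow $\phi_t$ is a concatenation over infinitely many directions; the facts that this concatenation converges to $|\Omega|^{1/N}\B$ in the Hausdorff metric, that it can be parametrized continuously on $[0,+\infty]$, and that convexity survives the whole concatenation are exactly the nontrivial inputs from \cite{BP} that you rightly flag as to be cited rather than reproved.
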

\begin{remark}\label{rem-CSS}
Composing a continuous Steiner symmetrization with the pair of shape functionals $(\X,\Y)$, we find a continuous path which connects a convex set $\Omega$ to the ball of the same volume. Moreover, in the diagram the path goes downwards in both $x$ and $y$ coordinates. 
\end{remark}

\medskip

The third and last deformation that we recall is the so-called {\it Minkowski sum} of two convex bodies $A$ and $B$:
$$
A\oplus B := \{ a + b \ :\ a \in A,\ b \in B\}.
$$
A classical reference on this subject is \cite{S}.
Given two convex sets $\Omega_0$ and $\Omega_1$ of unit measure, we define the path
\begin{equation}\label{mink}
t\mapsto \frac{t \Omega_1 \oplus (1-t) \Omega_0}{|t\Omega_1 \oplus (1-t) \Omega_0|^{1/N}}.
\end{equation}
Such function deforms in a continuous way (with respect to the Hausdorff metric) $\Omega_0$ into $\Omega_1$, preserving the volume and convexity.
Composing the function above with the pair of shape functionals $(\X,\Y)$, we obtain a continuous curve in $\E$ which connects $(\X(\Omega_0), \Y(\Omega_0))$ and $(\X(\Omega_1), \Y(\Omega_1))$. Such kind of curve will be referred to as \emph{normalized Minkowski curve}. 

\begin{remark}\label{rigid}
Notice that the points $(\X(\Omega_0), \Y(\Omega_0))$ and $(\X(\Omega_1), \Y(\Omega_1))$ are invariant under rigid motion of $\Omega_0$ and $\Omega_1$. But the Minkowski sum isn't. In particular, if we consider in \eqref{mink} the Minkowski sum $t \Omega_1 \oplus (1-t) \Phi(\Omega_0)$, being $\Phi$ a rigid motion, after composing with $(\X,\Y)$, we might find different paths in $\E$, still connecting the same endpoints.
\end{remark}

\subsection{Shape derivatives in dimension 2}

In this paragraph we recall the definition of shape derivatives of order 1 and 2 at $\B$, with respect to smooth deformations which preserve both convexity and volume. In this paragraph we work in dimension $N=2$.

The first order shape derivative of a functional $F$ at $\B$ in direction $V\in C^2(\mathbb R^2;\mathbb R^2)$, if it exists, is defined as
$$
F'(\B;V)\coloneqq \lim_{\e \to 0} \frac{F(\Omega_\e)-F(\B)}{\e},
$$  
where $\Omega_\e \coloneqq (I+\e V)(\B)$. Similarly, taking two vector fields $V,W\in C^2(\mathbb R^2;\mathbb R^2)$ and $\Omega_\e\coloneqq (I+\e V + \frac{\e^2}{2} W)$, the second order shape derivative, if it exists, reads
$$
F''(\B;V,W)\coloneqq \lim_{\e \to 0}2\, \frac{F(\Omega_\e)-F(\B)- \e F'(\B;V)}{\e^2}.
$$
We will focus our attention on a particular class of deformations acting on the class of admissible shapes
\begin{equation}\label{defA}
\mathcal A\coloneqq \{\Omega\subset \mathbb R^N \ : \ \Omega \hbox{ convex},\ |\Omega|= 1\}.
\end{equation}

\begin{definition}\label{admissible} We say that $V,W\in C^2(\mathbb R^2;\mathbb R^2)$ define an {\it admissible deformation} in {$\mathcal A$} if the sets $\Omega_\e\coloneqq (I+ \e V+ \e^2/2 W) (\B)$ {belong to $\mathcal A$}, for every $\e>0$ small enough.
\end{definition}

By \eqref{SV} and \eqref{FK}, the ball is a critical shape for both $T$ and $\lambda_1$ under volume constraint, therefore 
$$
T'(\B;V)=\lambda_1'(\B;V)=0
$$
for every admissible deformation $V$ in {$\mathcal A$}. 

The computation of the second order shape derivatives is more delicate and requires some preliminaries.
We choose to work with support functions (for the definition see, e.g., \cite{SZ}): the support function of $\B$ is constant and equals to the radius $R:=1/\sqrt{\pi}$ of the disk itself; whereas the support function of $\Omega_\e$ is a small perturbation of the constant $R$, of the following form: 
$$
R+ \e \alpha(\theta)+\frac{ \e^2}{2} \beta(\theta),
$$ 
being $\alpha$ and $\beta$ two suitable $2\pi$-periodic functions. 
The relation between $\alpha,\beta$ and $V,W$ is the following: on the boundary $\partial \B$, parametrized with the angular coordinate $\theta \in [0,\pi]$, we have
\begin{equation}\label{supporto}
V(R\cos\theta, R\sin \theta) = \alpha(\theta) n + \dot{\alpha}(\theta) \tau,\quad W(R\cos\theta, R\sin \theta) = \beta(\theta) n + \dot{\beta}(\theta) \tau,
\end{equation}
being $n=(\cos\theta, \sin\theta)$ the unit normal and $\tau=(-\sin\theta, \cos\theta)$ the unit tangent. Here a dot function represents the derivative function.
\\
In order to state the following result, it is convenient to write $\alpha$ and $\beta$ in Fourier series:
\begin{equation}\label{Fourier}
\alpha(\theta)=a_0 + \sum_{m\geq 1}[ a_m\cos(m\theta) + b_m \sin (m\theta)], \  \beta(\theta)= c_0 + \sum_{m\geq 1}[ c_m\cos(m\theta) + d_m \sin (m\theta)].
\end{equation}
The representation with support functions encodes the convexity constraint. As for the area constraint, it results in the following necessary condition (see Lemma \ref{optcond} and Remark \ref{lastremark} in the Appendix):
\begin{equation}\label{lastformula}
a_0=0, \quad c_0 = \frac{\pi}{R}\sum_{m\geq 1} (m^2-1)(a_m^2 + b_m^2).
\end{equation}
We are now in a position to state the following result.

\begin{proposition}\label{prop-ds} Let $V$ and $W$ be admissible deformations in {$\mathcal A$}. Then
\begin{align}
\lambda_1''(\B;V,W) & = 2\pi^2  j_{0,1}^2 \sum_{m\geq 2}\left[ \left( 1 +  j_{0,1}\frac{J_m'(j_{0,1})}{J_m(j_{0,1})} \right) (a_m^2+b_m^2)\right] ,\label{L''}
\\
T''(\B;V,W)& =  - \frac{1}{2} \sum_{m\geq 2} \left[(m-1)(a_m^2 + b_m^2)   \right],\label{T''}
\end{align}
where $\{a_m,b_m\}$ are the Fourier coefficients associated to $V$ as in \eqref{Fourier}-\eqref{supporto}, $J_m$ is the $m$-th Bessel function, and $j_{0,1}$ is the first zero of $J_0$.
\end{proposition}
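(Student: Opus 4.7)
The plan is to exploit that $\B$ is a critical shape for both $T$ and $\lambda_1$ under volume-preserving deformations, so that the first shape derivatives vanish. In the definition of $F''(\B;V,W)$, the term $F'(\B;W)$ is zero, hence $F''(\B;V,W)$ depends only on $V$, and in fact only on its normal component $V\cdot n=\alpha$ on $\partial\B$ by the Hadamard structure theorem. The volume constraint \eqref{lastformula} forces $a_0=0$, while the Fourier mode $m=1$ corresponds to an infinitesimal translation of $\B$ (whose support function is $R+v\cdot n$) and therefore does not alter $T$ or $\lambda_1$. By orthogonality of the trigonometric system on $[0,2\pi]$, it then suffices to compute the contribution of a single mode $\alpha_m(\theta)=a_m\cos(m\theta)+b_m\sin(m\theta)$ for each $m\ge 2$ and sum the results.

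For $T$, I would use $w_\B(r)=(R^2-r^2)/4$, so $\partial_n w_\B=-R/2$ on $\partial\B$. Differentiating the boundary value problem $-\Delta w_\e=1$ in $\Omega_\e$, $w_\e=0$ on $\partial\Omega_\e$, the first-order corrector $w'$ solves
\[
-\Delta w'=0\ \text{in}\ \B,\qquad w'=-\alpha\,\partial_n w_\B=\tfrac{R}{2}\alpha\ \text{on}\ \partial\B.
\]
Separation of variables gives, for a single mode, $w'_m(r,\theta)=\tfrac{R}{2}(r/R)^m\alpha_m(\theta)$. Using the energy formulation $T(\Omega)=\max_u\{2\int_\Omega u-\int_\Omega|\nabla u|^2\}$ with the maximization transported to $\bar\B$ via $\psi_\e=I+\e V+(\e^2/2)W$, I would expand to order $\e^2$: the interior Dirichlet energy $\int_\B|\nabla w'_m|^2$ combines with boundary curvature and Jacobian contributions involving $|\nabla w_\B|^2=R^2/4$; after using $\pi R^2=1$, the terms telescope to \eqref{T''}.

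For $\lambda_1$, the first eigenfunction is a constant multiple of $J_0(j_{0,1}r/R)$, so $\lambda_1(\B)=(j_{0,1}/R)^2=\pi j_{0,1}^2$ and $\partial_n\varphi_\B$ is constant on $\partial\B$. Differentiating the eigenvalue problem, and using $\lambda_1'(\B;V)=0$, the first-order corrector $\varphi'$ solves the Helmholtz problem
\[
-\Delta\varphi'-\lambda_1(\B)\varphi'=0\ \text{in}\ \B,\qquad \varphi'=-\alpha\,\partial_n\varphi_\B\ \text{on}\ \partial\B,
\]
which separates in polar coordinates as $\varphi'_m(r,\theta)=c_m\,J_m(j_{0,1}r/R)\,\alpha_m(\theta)$, with $c_m$ determined by the Dirichlet trace. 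Expanding the Rayleigh quotient to second order, the $m$-th contribution reduces by integration by parts to $\int_{\partial\B}\alpha_m\,\partial_n\varphi_\B\,\partial_n\varphi'_m\,d\sigma$ plus curvature and $L^2$-normalization corrections. The identity $\partial_r J_m(j_{0,1}r/R)|_{r=R}=(j_{0,1}/R)J_m'(j_{0,1})$ introduces the Bessel ratio $j_{0,1}J_m'(j_{0,1})/J_m(j_{0,1})$, while the curvature term produces the additive $1$, yielding \eqref{L''}.

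The main obstacle is the careful bookkeeping of the boundary contributions (curvature, Jacobian, and in the eigenvalue case the $L^2$-normalization of $\varphi_\B+\e\varphi'$) arising in the second-order Taylor expansion. These are precisely the pieces that account for the shift by $+1$ inside the parentheses in \eqref{L''} and for the subtraction producing the factor $(m-1)/2$ in \eqref{T''}. Because $\B$ is critical, the $W$-dependent contributions cancel automatically, leaving only a quadratic form in $V$; Fourier diagonalization together with the explicit harmonic and Bessel representations above then reduces the proposition to elementary trigonometric integrals on $[0,2\pi]$.
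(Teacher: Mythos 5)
Your overall strategy is the same as the paper's: reduce the second derivative to the first--order correctors (a harmonic function with boundary datum $\tfrac{R}{2}\alpha$ for the torsion, a solution of the Helmholtz equation with boundary datum $-\alpha\,\partial_\nu\varphi_\B$ for the eigenvalue), solve them by separation of variables in Fourier--Bessel modes, and diagonalize the resulting quadratic form mode by mode. Your separated solutions coincide with the paper's $v$ and $\z$, and your explanation of where the ratio $j_{0,1}J_m'(j_{0,1})/J_m(j_{0,1})$ comes from is correct, as is the observation that the mode $m=1$ drops out.

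There is, however, a genuine gap in your treatment of $W$. You assert that ``the term $F'(\B;W)$ is zero'' and that ``the $W$-dependent contributions cancel automatically''. They do not. The second--order shape derivatives contain the boundary term $\int_{\partial \B}\left(-W\cdot n - Z + \kappa (V\cdot n)^2\right)|\partial_\nu \varphi_\B|^2\,\de\mathcal H^1$ (and its analogue for $T$), and $\int_{\partial\B}W\cdot n=2\pi R\,c_0$ is \emph{not} zero: the second--order volume constraint (Lemma~\ref{volume}, rephrased in \eqref{lastformula}) forces $c_0=\frac{\pi}{R}\sum_{m\geq 1}(m^2-1)(a_m^2+b_m^2)$, a nonzero quadratic expression in $V$. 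It is precisely the substitution $\int(-W\cdot n-Z+\kappa\alpha^2)=2\kappa\int\alpha^2$, obtained from $\mathrm{Vol}''=0$, that produces the ``$+1$'' inside the parentheses of \eqref{L''} and contributes to the factor $(m-1)$ in \eqref{T''}. If you drop the $W\cdot n$ term as your plan suggests, you are left with $\int(-Z+\kappa\alpha^2)=\frac{1}{R}\int(\dot\alpha^2+\alpha^2)$, i.e.\ a spurious coefficient $(m^2+1)$ in place of $2$, and both final formulas come out wrong. Relatedly, the decisive second--order bookkeeping (``the terms telescope'', ``the curvature term produces the additive $1$'') is asserted rather than carried out, and that is exactly where this error would surface. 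To repair the argument you must record the condition $\int\beta=\frac{1}{R}\int(\dot\alpha^2-\alpha^2)$ and feed it into the expansion before diagonalizing, as the paper does in Lemma~\ref{volume} and Remark~\ref{lastremark}.
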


We postpone the details of the proof to the Section \ref{sec-app}.

\section{The diagram $\E$}\label{sec-E}
This section is devoted to the proof of Theorem \ref{thmE}.

\subsection{Upper and lower bounds}
We recall the definition \eqref{defA} of the class of admissible sets $\mathcal A$ and, for every $x\geq \lambda_1(\B)$, {we introduce} the subfamily
$$
\mathcal A(x)\coloneqq \{\Omega \in \mathcal A\ :\ \lambda_1(\Omega)=x  \}.
$$
Clearly, we have $\mathcal A=\cup_{x\geq \lambda_1(\B)} \mathcal A(x)$. Notice that $\mathcal A(x)$ are all non empty: for a fixed $x$, it is enough to take a parallelepiped $R$ of unit volume and sufficiently small width, so that $\lambda_1(R)>x$; then, taking a continuous Steiner symmetrization of $R$ (see Lemma \ref{CSS-cvx}) we obtain a continuous family of convex sets, whose first Dirichlet eigenvalue runs from $\lambda_1(\B)$ to $\lambda_1(R)$, covering all the interval, including $x$.
According to this notation, the points $(x,y)\in \E$ are of the form $(x,T(\Omega)^{-1})$, for some $\Omega \in \mathcal A(x)$.  

For every $x\geq \lambda_1(\B)$, the diagram is bounded above and below by the following functions:
\begin{equation}\label{Lpm}
L^+(x) \coloneqq \max \left\{T(\Omega)^{-1}\ :\ \Omega \in \mathcal A(x) \right\},\quad L^-(x) \coloneqq \min \left\{ T(\Omega)^{-1}\ :\ \Omega \in \mathcal A(x)\right\}.
\end{equation}
The existence of the maximum and the minimum is a direct consequence of Lemma~\ref{lemma.h}.

\begin{proposition}\label{prop-Lpiu}
The function $L^+$ is upper semicontinuous and continuous from the left. 
\end{proposition}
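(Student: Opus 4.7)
The plan is to prove upper semicontinuity of $L^+$ via a compactness argument based on Lemma~\ref{lemma.h}, and continuity from the left by propagating a maximizer of $L^+(x)$ through a continuous Steiner symmetrization.

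For upper semicontinuity I would fix $x\geq \la_1(\B)$ and a sequence $x_n\to x$, and choose maximizers $\Omega_n\in\mathcal A(x_n)$ so that $T(\Omega_n)^{-1}=L^+(x_n)$. Since $|\Omega_n|=1$ and $\la_1(\Omega_n)=x_n$ is bounded, Lemma~\ref{lemma.h} gives, up to a subsequence, Hausdorff convergence $\Omega_n\to \Omega$ together with $|\Omega|=1$, $\la_1(\Omega)=x$, and $T(\Omega_n)\to T(\Omega)$; hence $\Omega\in \mathcal A(x)$ and $\lim_n L^+(x_n)=T(\Omega)^{-1}\leq L^+(x)$. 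Applying this to any subsequence of $(L^+(x_n))$ yields $\limsup_n L^+(x_n)\leq L^+(x)$.

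For continuity from the left the case $x=\la_1(\B)$ is vacuous, so I take $x>\la_1(\B)$, a sequence $x_n\to x^-$ with $x_n\geq \la_1(\B)$, and a maximizer $\Omega^*\in\mathcal A(x)$, which by Faber--Krahn is not a ball. I then apply the continuous Steiner symmetrization of Lemma~\ref{CSS-cvx} to $\Omega^*$: this produces a Hausdorff-continuous family $\phi_t(\Omega^*)\in\mathcal A$ with $t\mapsto \la_1(\phi_t(\Omega^*))$ continuous and decreasing from $x$ at $t=0$ to $\la_1(\B)$ at $t=\infty$. By the intermediate value theorem, for every $n$ large there is a smallest $t_n>0$ with $\la_1(\phi_{t_n}(\Omega^*))=x_n$, so that $\phi_{t_n}(\Omega^*)\in \mathcal A(x_n)$ and $L^+(x_n)\geq T(\phi_{t_n}(\Omega^*))^{-1}$.

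The key step---and the main obstacle I anticipate---is to show $t_n\to 0^+$. Once this is established, Hausdorff continuity of $\phi$ and Lemma~\ref{CSS-cvx} give $T(\phi_{t_n}(\Omega^*))^{-1}\to T(\Omega^*)^{-1}=L^+(x)$, so $\liminf_n L^+(x_n)\geq L^+(x)$, and together with the upper semicontinuity just proved this yields continuity from the left. Any subsequential limit $t^*\geq 0$ of $(t_n)$ must satisfy $\la_1(\phi_{t^*}(\Omega^*))=x$ by continuity and the minimality of $t_n$, and I need to force $t^*=0$. The delicate point is ruling out a plateau of $\la_1\circ\phi$ at level $x$ near $t=0$: I would handle this either by invoking the strict monotonicity of $\la_1\circ\phi$ for non-balls built into the standard continuous Steiner symmetrization (iterating the symmetrization along non-symmetry directions if necessary), or by combining the CSS monotonicity of $T\circ\phi$ with the maximality of $\Omega^*$ to rule out a $T^{-1}$-drop along any such $\la_1$-plateau.
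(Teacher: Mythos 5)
Your proposal follows the same two-step route as the paper: upper semicontinuity via the compactness and continuity statements of Lemma~\ref{lemma.h} applied to maximizers $\Omega_n\in\mathcal A(x_n)$, and continuity from the left by sliding a maximizer $\Omega^*$ of $L^+(x)$ along a continuous Steiner symmetrization until its eigenvalue reaches $x_n$. The upper semicontinuity argument is complete and coincides with the paper's. For left-continuity, the obstacle you single out --- proving $t_n\to 0^+$, i.e.\ excluding a plateau of $t\mapsto\la_1(\phi_t(\Omega^*))$ at level $x$ near $t=0$ along which $T^{-1}$ could drop --- is precisely the point that the paper's own proof passes over in silence: there, the contradiction between ``$y(t)>L^+(x)-\e$ for $t$ small'' and ``$y(t)\le L^+(x_n)<L^+(x)-\e$'' is only effective if the level $x_n$ is attained by $x(t)$ at some \emph{small} $t$, which is exactly your $t_n\to0^+$ claim. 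So you have correctly isolated the one genuinely delicate step, but you have not closed it, and neither does the paper.

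Concerning your two proposed remedies: only the first is viable. What is needed is a strict-decrease or equality-case statement for $\la_1$ along the continuous Steiner symmetrization (if $\la_1(\phi_t(\Omega^*))=\la_1(\Omega^*)$ on $[0,t_0]$ then $\phi_t(\Omega^*)$ does not move, hence $T^{-1}$ does not drop either, and one may restart the argument at $t_0$); this is available in the literature on the continuous Steiner symmetrization but is not contained in Lemma~\ref{CSS-cvx} as stated, which only asserts (non-strict) monotonicity. Your second remedy does not work: if $\la_1(\phi_t(\Omega^*))\equiv x$ on $[0,t_0]$ while $T(\phi_t(\Omega^*))^{-1}$ strictly decreases, the sets $\phi_t(\Omega^*)$ are merely suboptimal competitors in $\mathcal A(x)$, which is entirely compatible with the maximality of $\Omega^*$ for $L^+(x)$, so no contradiction arises from maximality alone. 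In short, your proof is the paper's proof with its one hidden assumption made explicit; to be complete it needs the equality case of the monotonicity of $\la_1$ under continuous Steiner symmetrization.
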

\begin{proof} 
We first prove the upper semicontinuity. Let $x\geq \lambda_1(\B)$ and let $x_n \to x$ be fixed. We notice that $\sup_n L^+(x_n)$ is bounded, since in view of \eqref{ubE} there holds $L^+(x_n)\leq x_n/C_N$. Therefore, up to a subsequence (not relabeled), we may assume that $ \limsup_n L^+(x_n)=\lim_n L^+(x_n)$. In view of Lemma \ref{lemma.h}, there exists a family of shapes $\Omega_n\in \mathcal A(x_n)$ such that $L^+(x_n)=\Y(\Omega_n)$ and we may find a subsequence $n_k$ and a convex set $\Omega$ satisfying the following properties: the sets $\Omega_{n_k}$ converge to $\Omega$ in the Hausdorff metric as $k\to +\infty$, the limit set $\Omega$ belongs to $\mathcal A(x)$, and $\Y(\Omega)= \lim_k L^+(x_{n_k})$. 
In particular, since $\Omega$ is a competitor for $L^+(x)$ and since by construction $\lim_k L^+(x_{n_k})=\limsup_n L^+(x_n)$, we deduce 
$$L^+(x)\geq Y(\Omega)=\limsup_{n\to\infty} L^+(x_n),$$
namely that $L^+$ is upper semicontinuous.

We now prove the continuity from the left. Let $x\geq \lambda_1(\B)$ and $x_n \to x^-$ be fixed. Thanks to the upper semicontinuity it is enough to prove that 
$$\liminf_{n\to\infty} L^+(x_n)\geq L^+(x).$$
Assume by contradiction that $\liminf_n L^+(x_n) < L^+(x)$. Up to extract a subsequence we may assume that  $\liminf_n L^+(x_n)=\lim_n L^+(x_n)$. Then, for $\epsilon>0$ fixed, we have $L^+(x_n)<L^+(x)-\e$ for $n$ large enough. Let $\Omega\in \mathcal A(x)$ be a set such that $L^+(x)=\Y(\Omega)$. Performing a continuous Steiner symmetrization of $\Omega$ (see Lemma \ref{CSS-cvx} and Remark \ref{rem-CSS}), we construct a (continuous) curve $(x(t),y(t))$, $t\in [0,+\infty]$, contained into the diagram $\E$, with $x(t)=\X(\phi_t(\Omega))$, $y(t)=\Y(\phi_t(\Omega))$, $\phi_t(\Omega)$ convex sets of unit volume, and such that $x(t)\leq x$ and $y(t)\leq L^+(x)$. On one hand, by definition, we have that $L^+(x)-\e > L^+(x_n)=L^+(x(t))\geq \Y(\phi_t(\Omega)) =y(t)$; on the other hand, by the continuity of the continuous Steiner symmetrization, we have $y(t) > L^+(x) -\e$ for all $t$ small enough. This gives the desired contradiction.
 \end{proof}

\begin{proposition}\label{prop-Lmeno}
The function $L^-$ is lower semicontinuous and increasing (then it is continuous from the left).
\end{proposition}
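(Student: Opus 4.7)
The plan is to follow the same structure as the proof of Proposition~\ref{prop-Lpiu}: establish lower semicontinuity by a compactness argument on minimizers, establish monotonicity by a continuous Steiner symmetrization argument, and then deduce left continuity as a routine consequence of the combination.

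For the lower semicontinuity, I would fix a sequence $x_n\to x$ and set $\ell:=\liminf_n L^-(x_n)$. If $\ell=+\infty$ there is nothing to prove. Otherwise, extracting a (non-relabeled) subsequence along which $L^-(x_n)\to\ell$, I pick minimizers $\Omega_n\in\mathcal A(x_n)$ with $Y(\Omega_n)=L^-(x_n)$. Since $|\Omega_n|=1$ and $\lambda_1(\Omega_n)=x_n$ is bounded, Lemma~\ref{lemma.h} applies and yields a further subsequence converging in the Hausdorff metric to a convex set $\Omega$ with $|\Omega|=1$, $\lambda_1(\Omega)=x$ and $Y(\Omega)=\ell$. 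Thus $\Omega\in\mathcal A(x)$ is admissible for $L^-(x)$, so $L^-(x)\le Y(\Omega)=\ell$, which is exactly the lower semicontinuity.

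For the monotonicity, fix $\lambda_1(\B)\le x_1<x_2$ and a minimizer $\Omega_2\in\mathcal A(x_2)$ realizing $L^-(x_2)=Y(\Omega_2)$. By Lemma~\ref{CSS-cvx} the continuous Steiner symmetrization $t\mapsto\phi_t(\Omega_2)$ defines a continuous family of convex sets of unit volume along which $t\mapsto\lambda_1(\phi_t(\Omega_2))$ is continuous and decreasing from $\lambda_1(\Omega_2)=x_2$ down to $\lambda_1(\phi_\infty(\Omega_2))=\lambda_1(\B)\le x_1$. By the intermediate value theorem there exists $t^\star\in[0,+\infty]$ with $\lambda_1(\phi_{t^\star}(\Omega_2))=x_1$, so that $\Omega_1:=\phi_{t^\star}(\Omega_2)$ belongs to $\mathcal A(x_1)$; moreover, the same lemma gives $Y(\Omega_1)\le Y(\Omega_2)=L^-(x_2)$, whence $L^-(x_1)\le L^-(x_2)$.

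Finally, for the left continuity, if $x_n\to x^-$ with $x_n<x$, then by monotonicity $L^-(x_n)\le L^-(x)$, hence $\limsup_n L^-(x_n)\le L^-(x)$; combining with the lower semicontinuity $L^-(x)\le\liminf_n L^-(x_n)$ just proved, the limit exists and equals $L^-(x)$. I do not expect any serious obstacle: the compactness/continuity package of Lemma~\ref{lemma.h} drives the semicontinuity and the monotonicity of both $\lambda_1$ and $T^{-1}$ along the continuous Steiner symmetrization (Lemma~\ref{CSS-cvx}) drives the monotonicity; the only point to watch is that in the intermediate value step one needs $x_1\ge\lambda_1(\B)$, which is automatic since $L^-$ is only defined on $[\lambda_1(\B),+\infty)$.
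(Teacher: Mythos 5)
Your proof is correct and follows essentially the same route as the paper, which itself only sketches these two steps (lower semicontinuity via the compactness and continuity of Lemma~\ref{lemma.h} applied to minimizers, and monotonicity via the continuous Steiner symmetrization of a minimizer at the larger abscissa); your direct intermediate-value formulation replaces the paper's contradiction argument but uses exactly the same ingredients. Note only that both your argument and the paper's really yield that $L^-$ is non-decreasing rather than strictly increasing, which is all that is needed for the left continuity.
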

\begin{proof}
The proof follows the lines of Proposition \ref{prop-Lpiu}. The lower semicontinuity follows from the fact that $L^-$ is defined as a minimum; the monotonicity follows by contradiction: if there would exist $x_1<x_2$ with $L^-(x_1)\geq L^-(x_2)$ then the continuous Steiner symmetrization, see Lemma~\ref{CSS-cvx}, starting from $(x_2, L^-(x_2))$ would contradict the minimality of $L^-(x_1)$.
\end{proof}

We denote by $\Gamma^+$ and $\Gamma^-$ the graphs of $L^+$ and $L^-$, respectively:
\begin{equation}\label{bordi}
\Gamma^\pm:= \mathrm{graph}L^\pm = \{(x,L^\pm(x))\ :\ x\geq \lambda_1(\B)\}.
\end{equation}
Note that $\Gamma^+\cup\Gamma^-\subset \partial \E$. If $L^\pm$ were continuous, we would have that their graphs are the upper and lower boundaries of $\E$.

We now focus our attention on the behavior of $L^\pm$ near $x=\lambda_1(\B)$. To this aim, we introduce an auxiliary family of shape functionals: given $\gamma \in \mathbb R$, we set 
\begin{equation}\label{Fgamma}
F_\gamma(\Omega)\coloneqq  \frac{1}{T(\Omega)} - \gamma \lambda_1(\Omega), 
\end{equation}
where $\Omega$ varies in $\mathcal A$.
\begin{definition}\label{def-locminmax}
We say that $\Omega^*\in \mathcal A$ is a {\it local minimizer [resp. maximizer]} for $F_\gamma$ if there exists $\e>0$ such that, for every $\Omega\in \mathcal A$,
\begin{equation}\label{locmin}
|\lambda_1(\Omega)-\lambda_1(\Omega^*)|<\e \quad \Rightarrow \quad  F_\gamma(\Omega)\geq  F_\gamma(\Omega^*) \quad [\hbox{resp. }\  F_\gamma(\Omega)\leq  F_\gamma(\Omega^*)].
\end{equation}
\end{definition}

\begin{proposition}\label{gammameno} Let $x_0\coloneqq \lambda_1(\B)$ and set the maximal and minimal slope at $\mathbb V$ as
\begin{equation}\label{gammapm}
\gamma^+\coloneqq {\sup_{x_n \searrow  x_0} \limsup_{n\to \infty}} \frac{L^+(x_n) - L^+(x_0)}{x_n- x_0},\quad 
\gamma^-\coloneqq {\inf_{x_n \searrow  x_0} \liminf_{n\to \infty} }\frac{L^-(x_n) - L^-(x_0)}{x_n- x_0}.
\end{equation}
Then $\gamma^\pm$ are in $[0,+\infty]$ and admit the following characterization:
\begin{align*}
\gamma^+  =  \inf\{\gamma\ :\ \B\ \hbox{is a local maximizer of}\  F_\gamma\} = \sup\{\gamma\ :\ \B\ \hbox{is not a local maximizer of}\  F_\gamma\},
\\
\gamma^-  = \sup\{\gamma\ :\ \B\ \hbox{is a local minimizer of}\  F_\gamma\} = \inf\{\gamma\ :\ \B\ \hbox{is not a local minimizer of}\  F_\gamma\}.
\end{align*}
\end{proposition}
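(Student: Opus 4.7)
The plan is to translate both characterizations into a geometric statement about the diagram $\E$ near $\mathbb V$. Since $F_\gamma(\Omega)=\Y(\Omega)-\gamma\X(\Omega)$, its level sets are straight lines of slope $\gamma$ in the $(x,y)$ plane, so ``$\B$ is a local maximizer (resp.\ minimizer) of $F_\gamma$'' means precisely that a small neighborhood of $\mathbb V$ in the diagram lies below (resp.\ above) the line through $\mathbb V$ of slope $\gamma$. By the strict Faber--Krahn inequality, every $\Omega\in\mathcal A$ satisfies $\lambda_1(\Omega)\geq x_0$, with equality only at the ball, so $\mathcal A(x_0)=\{\B\}$ up to rigid motions and $L^\pm(x_0)=y_0$; in particular the relevant points of $\E$ near $\mathbb V$ all lie in $\{x\geq x_0\}$, and the slopes of $L^\pm$ at $\mathbb V$ are genuine one-sided derivatives from the right.

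For $\gamma^+$, I would argue by a double inequality. If $\B$ is a local maximizer of $F_\gamma$, then for any $x_n\searrow x_0$ and any shape $\Omega_n\in \mathcal A(x_n)$ realizing the maximum in $L^+(x_n)$ (existence granted by Lemma~\ref{lemma.h}), the defining inequality $\Y(\Omega_n)-y_0\leq \gamma(x_n-x_0)$ gives $(L^+(x_n)-L^+(x_0))/(x_n-x_0)\leq \gamma$ for $n$ large, so $\gamma^+\leq \gamma$. Conversely, if $\B$ is not a local maximizer of $F_\gamma$, for each $n$ we can pick $\Omega_n\in\mathcal A$ with $|\lambda_1(\Omega_n)-x_0|<1/n$ and $F_\gamma(\Omega_n)>F_\gamma(\B)$; Faber--Krahn rigidity forces $x_n\coloneqq \lambda_1(\Omega_n)>x_0$ (otherwise $\Omega_n=\B$ and $F_\gamma(\Omega_n)=F_\gamma(\B)$), and $L^+(x_n)\geq \Y(\Omega_n)>y_0+\gamma(x_n-x_0)$ yields $\gamma^+\geq \gamma$ along this sequence. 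This sandwiches $\gamma^+$ between $\sup\{\gamma:\B$ is not a local maximizer of $F_\gamma\}$ and $\inf\{\gamma:\B$ is a local maximizer of $F_\gamma\}$. To collapse the sandwich I would use the elementary identity
\[
F_{\gamma'}(\Omega)-F_{\gamma'}(\B)=F_\gamma(\Omega)-F_\gamma(\B)+(\gamma-\gamma')(\lambda_1(\Omega)-x_0),
\]
which, combined with Faber--Krahn, shows that the property of being a local maximizer of $F_\gamma$ is preserved when $\gamma$ is increased. Hence $\{\gamma:\B$ is a local maximizer of $F_\gamma\}$ is an upper half-line and the two boundary values coincide.

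The case of $\gamma^-$ is entirely parallel: ``maximizer'' is replaced by ``minimizer'', the bounding inequalities are reversed, and the same monotonicity identity now shows that being a local minimizer is preserved upon \emph{decreasing} $\gamma$, so $\{\gamma:\B$ is a local minimizer of $F_\gamma\}$ is a lower half-line and the corresponding sandwich closes. Finally, to conclude $\gamma^\pm\in[0,+\infty]$ I would invoke the Saint--Venant inequality~\eqref{SV} to obtain $L^+(x_n)\geq y_0$ and the monotonicity of $L^-$ from Proposition~\ref{prop-Lmeno} to obtain $L^-(x_n)\geq y_0$; since $x_n>x_0$, both difference quotients are nonnegative.

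The main obstacle, more conceptual than technical, is to carefully exploit the \emph{rigidity} in the Faber--Krahn inequality at every step: without it, sequences with $\lambda_1(\Omega_n)=x_0$ but $\Omega_n\neq \B$ could spoil the one-sidedness on which the translation from ``$\B$ is not a local extremizer'' to a slope bound from the right relies.
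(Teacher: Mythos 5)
Your proposal is correct and follows essentially the same route as the paper: the same geometric reformulation of local extremality of $\B$ for $F_\gamma$ as a slope condition at $\mathbb V$, the same two-sided sandwich using optimal sets for $L^\pm(x_n)$ in one direction and witnesses of non-extremality in the other, and the same interval (half-line) structure of $\{\gamma : \B \text{ is a local extremizer of } F_\gamma\}$ to close the sandwich. The only detail the paper makes explicit and you elide is that both this set and its complement are nonempty (the paper uses Faber--Krahn/Saint--Venant for one and the Kohler--Jobin bound for the other), which is needed to rule out the degenerate values $\pm\infty$ in the collapse; your separate nonnegativity argument covers this in substance.
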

\begin{proof} We start by studying the function $L^-$. Let $I$ denote the family of parameters $\gamma$ for which the ball is a local minimizer of $F_\gamma$, and by $J$ its complement.
A characterization of the local minimality of $\B$, alternative to \eqref{locmin}, is
\begin{equation}\label{jeudi}
\frac{\Y(\Omega)-\Y(\B)}{\X(\Omega)-\X(\B)}\geq \gamma,
\end{equation}
for every shape $\Omega\in \mathcal A$ such that $0<\X(\Omega) - \X(\B)<\e$ for some $\e>0$ independent of $\Omega$. Such characterization implies that $I$ and its complement $J$ are two intervals. Moreover, $I$ and $J$ are not empty: on one hand, since the ball is a global minimizer for both $X$ and $Y$ (see \eqref{FK} and \eqref{SV}), we infer that every $\gamma\leq 0$ belongs to $I$; on the other hand, taking into account the characterization \eqref{jeudi} and recalling that the diagram is bounded above by the Kohler-Jobin curve $\Gamma_{\mathbb B}$, we infer that every $\gamma > (N+2)/(2T(\B)\lambda_1(\B))$ belongs to $J$. All in all, we infer that 
$$
\sup_I \gamma = \inf_J \gamma <+\infty.
$$

Let $x_n$ be an arbitrary sequence converging to $x_0^+$, let $\gamma$ be an arbitrary element of $I$, and let $\e>0$ be associated to $\gamma$ according to Definition \ref{def-locminmax}. Denote by $\Omega_n$ the shapes in $\mathcal A$ such that $\X(\Omega_n)=x_n$ and $\Y(\Omega_n)=L^-(x_n)$, whose existence in ensured by Lemma \ref{lemma.h}.
By convergence, we infer that, for $n$ large enough, $|x_n-x_0|<\e$. By the characterization \eqref{jeudi} of local minimality, we deduce that
$$
\frac{L^-(x_n)- L^-(x_0)}{x_n-x_0}\geq \gamma.
$$
By the arbitrariness of $\gamma$ in $I$, we conclude that
\begin{equation}\label{gammasup}
\liminf_{n\to \infty} \frac{L^-(x_n)- L^-(x_0)}{x_n-x_0} \geq \sup_I \gamma.
\end{equation}
Let now $\gamma\in J$. Since the ball is not a local minimizer for $F_\gamma$, we may find a sequence of shapes $\Omega_n$ in $\mathcal A$ such that ${\widehat{x}_n}\coloneqq \X(\Omega_n)\to x_0^+$, and ${\widehat{y}_n}\coloneqq \Y(\Omega_n)$ satisfy
$$
\frac{{\widehat{y}_n}-y_0}{{\widehat{x}_n}-x_0} \leq \gamma.
$$ 
By definition, we have $L^-({\widehat{x}_n})\leq {\widehat{y}_n}$, so that
$$
\frac{L^-({\widehat{x}_n})-L^-(x_0)}{{\widehat{x}_n}-x_0} \leq \gamma.
$$
By the arbitrariness of $\gamma \in J$ we get
\begin{equation}\label{gammainf}
\liminf_{n\to \infty} \frac{L^-({\widehat{x}_n})- L^-(x_0)}{{\widehat{x}_n}-x_0} \leq \inf_J \gamma.
\end{equation}
By combining \eqref{gammasup} and \eqref{gammainf} we finally obtain the desired property for $L^-$ at $x_0$.

The statement for $L^+$ can be derived following the same procedure presented for $L^-$ and $\gamma^-$.
\end{proof}

\subsection{Slopes at the vertex $\mathbb V$ in dimension 2}\label{slopes}
In this subsection we find two bounds for the slopes $\gamma^\pm$ at the vertex $\mathbb V$, whose definition is given in \eqref{gammapm}. In this subsection we focus on the planar case $N=2$; a comment for the general dimension is postponed to Remark~\ref{rem-slopes}.

The computation relies on shape derivatives techniques.

\begin{lemma}\label{lem-casi}  Let $F_\gamma$ be the family of shape functionals in \eqref{Fgamma}. Then the following implications hold:
\begin{align*}
 \gamma <  \displaystyle{\frac{32}{j_{0,1}^2(j_{0,1}^2-2)}} \quad & \Rightarrow \quad F''_\gamma(\B;V,W)>0\quad \forall\, V,W\ \hbox{admissible},
\\
 \gamma> \frac{16}{j_{0,1}^2} \quad & \Rightarrow \quad F''_\gamma(\B;V,W)<0\quad \forall\, V,W\ \hbox{admissible}.
\end{align*}
In the intermediate cases, when $32/[j_{0,1}^2(j_{0,1}^2-2)] < \gamma < 16/j_{0,1}^2$, the second order shape derivative does not have constant sign, namely there exist $V_0, W_0$ and $V_1,W_1$ admissible such that
$$
 F''_\gamma(\B;V_0,W_0)<0,\quad  F''_\gamma(\B;V_1,W_1)>0.
$$
\end{lemma}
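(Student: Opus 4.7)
The plan is to diagonalize $F_\gamma''(\B;V,W)$ in Fourier modes and then reduce the statement to a sign analysis of one scalar per mode. Since $T'(\B;V)=0$, the chain rule gives $(1/T)''(\B;V,W) = -T(\B)^{-2} T''(\B;V,W)$, and with $T(\B)=1/(8\pi)$ in dimension $2$, inserting the expressions of Proposition \ref{prop-ds} yields, by bilinearity,
$$
F_\gamma''(\B;V,W) = 2\pi^2 \sum_{m\geq 2} D_m(\gamma)\,(a_m^2+b_m^2),\qquad D_m(\gamma):=16(m-1)-j_{0,1}^2\gamma\, c_m,
$$
where $c_m:=1+j_{0,1}J_m'(j_{0,1})/J_m(j_{0,1})$ and $(a_m,b_m)$ are the Fourier coefficients from \eqref{Fourier}--\eqref{supporto} (recall that $a_0=0$ by \eqref{lastformula} and that the modes $m=1$ encode infinitesimal rigid translations and so are absent from the sums of Proposition \ref{prop-ds}). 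Hence the sign of $F_\gamma''(\B;V,W)$ across admissible pairs is governed by the signs of the scalars $D_m(\gamma)$, $m\geq 2$.

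For each $m \geq 2$ set $\gamma_m := 16(m-1)/(j_{0,1}^2 c_m)$, so that, whenever $c_m>0$, $D_m(\gamma)$ has the same sign as $\gamma_m-\gamma$. A short calculation with the recurrence $J_2(x)=(2/x)J_1(x)-J_0(x)$ at $x=j_{0,1}$, where $J_0(j_{0,1})=0$, gives $c_2=(j_{0,1}^2-2)/2$, whence
$$
\gamma_2 = \frac{32}{j_{0,1}^2(j_{0,1}^2-2)}.
$$
For larger $m$, the identity $j_{0,1}J_m'(j_{0,1})/J_m(j_{0,1}) = m - j_{0,1}J_{m+1}(j_{0,1})/J_m(j_{0,1})$ combined with the decay of $R_m:=J_{m+1}(j_{0,1})/J_m(j_{0,1})$ as $m\to\infty$ gives $c_m = m+1-o(1)$ and $\gamma_m\to 16/j_{0,1}^2$. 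Provided one checks $c_m>0$, $\gamma_m\leq 16/j_{0,1}^2$ and $\gamma_2=\min_{m\geq 2}\gamma_m$, the two extreme implications of the lemma follow at once: for $\gamma<\gamma_2$ every $D_m(\gamma)$ is positive, and for $\gamma>16/j_{0,1}^2$ every $D_m(\gamma)$ is negative.

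For the intermediate range $\gamma_2<\gamma<16/j_{0,1}^2$ I would exhibit two admissible pairs concentrated on a single Fourier mode: taking $\alpha=a_2\cos(2\theta)$ with $\beta$ the constant prescribed by \eqref{lastformula} yields $F_\gamma''(\B;V_0,W_0) = 2\pi^2 D_2(\gamma)\, a_2^2 < 0$ since $\gamma>\gamma_2$, whereas choosing $\alpha=a_{m_\gamma}\cos(m_\gamma\theta)$ for $m_\gamma$ large enough that $\gamma<\gamma_{m_\gamma}$ yields $F_\gamma''(\B;V_1,W_1)>0$. Convexity of $\Omega_\e$ for small $\e$ is easily checked from the inequality $h+h''\geq 0$ on the support function $R+\e\alpha+\e^2\beta/2$, and the exact unit area can be restored by a higher-order dilation whose effect on $F_\gamma''$ is of higher order and therefore negligible. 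The \emph{main obstacle} is the cluster of Bessel-type inequalities behind the claims $c_m>0$, $\gamma_m\leq 16/j_{0,1}^2$ (equivalently $j_{0,1}R_m\leq 2$) and $\gamma_2=\min_{m\geq 2}\gamma_m$; I expect these to yield to an induction on $m$ using the three-term recurrence $R_m = 2m/j_{0,1}-1/R_{m-1}$ started from $R_1=2/j_{0,1}$ (a direct consequence of $J_0(j_{0,1})=0$), combined with the positivity of $J_m(j_{0,1})$ for every $m\geq 0$.
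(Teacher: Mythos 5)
Your reduction is exactly the one the paper uses: since $T'(\B;V)=\lambda_1'(\B;V)=0$, Proposition \ref{prop-ds} diagonalizes $F''_\gamma(\B;V,W)$ over the Fourier modes $m\geq 2$, the sign is governed mode by mode by a scalar (your $D_m(\gamma)$, the paper's $c_m(r_m-\gamma)$ with $r_m=\gamma_m$), the value $\gamma_2=32/[j_{0,1}^2(j_{0,1}^2-2)]$ comes from the recurrence $J_2=(2/x)J_1-J_0$ at $x=j_{0,1}$, the limit $\gamma_m\to 16/j_{0,1}^2$ from the decay of $R_m$, and the intermediate regime is handled by single-mode perturbations. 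All of that is correct and matches the paper.

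The genuine gap is the cluster of uniform-in-$m$ Bessel inequalities that you yourself flag as ``the main obstacle'', and the strategy you propose for them would not work. The decisive inequality is the \emph{upper} bound $j_{0,1}R_m\leq 2$ for all $m\geq 2$ (equivalently $\gamma_m\leq 16/j_{0,1}^2$, equivalently $j_{0,1}J_m'(j_{0,1})/J_m(j_{0,1})\geq m-2$), and a forward induction on $R_m=2m/j_{0,1}-1/R_{m-1}$ cannot deliver it: the interval $(0,2/j_{0,1}]$ is not invariant under the map $r\mapsto 2m/j_{0,1}-1/r$ once $m\geq 3$ (its image at $r=2/j_{0,1}$ is $2m/j_{0,1}-j_{0,1}/2>2/j_{0,1}$ as soon as $m>1+j_{0,1}^2/4\approx 2.45$), reflecting the classical fact that $J_m$ is the \emph{minimal} solution of the three-term recurrence, so forward iteration amplifies any slack in an inductive hypothesis. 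Positivity of $J_m(j_{0,1})$ only propagates \emph{lower} bounds backward (it gives $R_m>j_{0,1}/(2(m+1))$, which does suffice for $\gamma_2=\min_m\gamma_m$, since for $m\geq 4$ even $R_m>0$ is enough and $m=3$ is an explicit rational computation). For the missing upper bound the paper imports Krasikov's estimate $yJ_m'(y)/J_m(y)\geq m-2y^2/(2m+1)$, valid for $y<m+1/2$, which gives $j_{0,1}R_m\leq 2j_{0,1}^2/(2m+1)<2$ for every $m\geq 3$, the case $m=2$ being explicit; some external input of this kind (or a backward/continued-fraction argument yielding genuine upper bounds) is needed to close your proof. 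Two cosmetic points: positivity of $J_m(j_{0,1})$ holds for $m\geq 1$, not $m\geq 0$ (indeed $J_0(j_{0,1})=0$), and your overall prefactor $2\pi^2$ in the diagonalized form of $F''_\gamma$ differs from the paper's \eqref{alter} by a factor $j_{0,1}^2$, which is immaterial for the sign analysis.
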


\begin{proof}

Let $\gamma\in \mathbb R$ and $V,W$ be admissible deformations in {$\mathcal A$}. According to Definition \ref{admissible}, the corresponding small deformations preserve convexity and volume. The volume constraint induces a relation between $V$ and $W$, so that the second order shape derivatives of $T$ and $\lambda_1$ can be written in terms of the sole vector field $V$ (cf. Proposition \ref{prop-ds}). For this reason, in the rest of the proof, $W$ will be omitted. 
Since both $T'$ and $\lambda_1'$ vanish at $\B$, we easily obtain
$$
F_\gamma''(\B;V)= -\frac{T''(\B;V)}{T(\B)^2} - \gamma \lambda_1''(\B;V)\,.
$$
In view of \eqref{L''} and \eqref{T''}, we get
\begin{equation}\label{alter}
F_\gamma''(\B;V) = 2\pi^2 \sum_{m\geq 2} \left(1 + j_{0,1}\frac{J_m'(j_{0,1})}{J_m(j_{0,1})} \right) (r_m-\gamma)  (a_m^2+b_m^2),
\end{equation}
where, for brevity, we have set
$$
r_m\coloneqq \frac{16(m-1)}{j_{0,1}^2\left(1 + j_{0,1}\frac{J_m'(j_{0,1})}{J_m(j_{0,1})}\right)},\quad \hbox{for }m\geq 2.
$$
We claim that
\begin{equation}\label{rmseq}
0<r_2 \leq r_m < 16/{j_{0,1}^2}= \lim_{m\to \infty} r_m.
\end{equation}
These inequalities and the asymptotic behavior of $r_m$ are the consequence of the following estimates, whose statement and proof can be found in \cite[Lemma 11]{Kra1} and \cite[Theorem 1]{Kra2}:
$$
y \frac{J_m'(y)}{J_m(y)}\geq m - \frac{2y^2}{2m+1}\,,\quad y \frac{J_m'(y)}{J_m(y)} \leq \frac{4y^2 - 12 m - 6 + \sqrt{(\mu-4y^2)^3+\mu^2}}{2 [ (2m+1)(2m+5) - 4 y^2]},
$$
valid for $0\leq y < m+1/2$, with $\mu\coloneqq (2m+1)(2m+3)$. Actually, a numerical computation {suggests} that $r_m$ is an increasing sequence, from $r_2$ to $16/j_{0,1}^2$.

Finally, exploiting the positivity of $1+j_{0,1} J_m'(j_{0,1})/J_m(j_{0,1})$ and \eqref{rmseq}, we infer that if $\gamma$ is below $r_2$ or above $16/j_{0,1}^2$, the derivative $F''_\gamma$ has constant sign, positive and negative, respectively, for every admissible deformation. On the other hand, if $\gamma$ is strictly between the two values, there exist suitable choices of $a_m$ and $b_m$ which make the derivative positive or negative. Exploiting the well known properties of the Bessel functions and their derivatives, we obtain the expression $r_2=32/[j_{0,1}^2(j_{0,1}^2-2)]$. This concludes the proof.
\end{proof}

The thresholds appearing in Lemma \ref{lem-casi} give the values of $\gamma^\pm$, as we state in the following.

\begin{proposition}\label{prop-valorislopes} 
In dimension $N=2$ the minimal and maximal slopes introduced in Proposition \ref{gammameno} satisfy
\begin{equation}\label{defgammameno}
\gamma^+= \frac{16}{j_{0,1}^2}  \quad \hbox{and} \quad  \gamma^- {\leq} \frac{32}{j_{0,1}^2(j_{0,1}^2-2)}.
\end{equation}
\end{proposition}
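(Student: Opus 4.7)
The plan is to combine the variational characterization of $\gamma^\pm$ in Proposition~\ref{gammameno} with the sign information of Lemma~\ref{lem-casi}. Three facts must be established: (i) for every $\gamma > 16/j_{0,1}^2$ the ball is a local maximizer of $F_\gamma$, forcing $\gamma^+ \leq 16/j_{0,1}^2$; (ii) for every $\gamma < 16/j_{0,1}^2$ the ball fails to be a local maximizer, forcing $\gamma^+ \geq 16/j_{0,1}^2$; and (iii) for every $\gamma > 32/[j_{0,1}^2(j_{0,1}^2-2)]$ the ball fails to be a local minimizer, yielding the upper bound on $\gamma^-$.

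Items (ii) and (iii) are the direct side of the argument and rely on a single Taylor expansion. Since $F'_\gamma(\B;V) = 0$ for every admissible $V$, for any smooth admissible $\Omega_\varepsilon = (I + \varepsilon V + \tfrac{\varepsilon^2}{2}W)(\B)$ one has
\[
F_\gamma(\Omega_\varepsilon) = F_\gamma(\B) + \tfrac{\varepsilon^2}{2}\,F''_\gamma(\B;V,W) + o(\varepsilon^2).
\]
For (ii): if $\gamma < 32/[j_{0,1}^2(j_{0,1}^2-2)]$, Lemma~\ref{lem-casi} yields $F''_\gamma > 0$ along every nontrivial admissible direction, and an explicit one is obtained by taking a single Fourier coefficient $a_m \neq 0$ for some $m \geq 2$ in~\eqref{supporto}, with $W$ forced by~\eqref{lastformula}; if $32/[j_{0,1}^2(j_{0,1}^2-2)] < \gamma < 16/j_{0,1}^2$, Lemma~\ref{lem-casi} directly supplies admissible $V_1, W_1$ with $F''_\gamma(\B;V_1,W_1) > 0$. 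In either case $F_\gamma(\Omega_\varepsilon) > F_\gamma(\B)$ for small $\varepsilon$, so $\B$ is not a local maximizer. Item~(iii) is the mirror image: for $\gamma > 32/[j_{0,1}^2(j_{0,1}^2-2)]$ Lemma~\ref{lem-casi} supplies admissible $V_0, W_0$ with $F''_\gamma(\B;V_0,W_0) < 0$, hence $F_\gamma(\Omega_\varepsilon) < F_\gamma(\B)$ and $\B$ is not a local minimizer.

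The delicate item is (i). Lemma~\ref{lem-casi} gives $F''_\gamma(\B;V,W) < 0$ along \emph{every} smooth admissible deformation, and by~\eqref{alter} the bound is quantitative, of the form
\[
F''_\gamma(\B;V,W) \leq -\delta(\gamma) \sum_{m \geq 2}(a_m^2 + b_m^2), \qquad \delta(\gamma) > 0.
\]
To convert this into local maximality over the full class $\mathcal A$, the plan is to pass through the support function representation: by Lemma~\ref{lemma.h} any $\Omega \in \mathcal A$ with $\lambda_1(\Omega)$ close to $\lambda_1(\B)$ is, up to a translation (which leaves $F_\gamma$ invariant), close to $\B$ in the Hausdorff metric; hence its support function is $R + \alpha$ with $\|\alpha\|$ small, and after rescaling one extracts admissible fields $V, W$ as in~\eqref{supporto} realizing $\Omega = (I + \varepsilon V + \tfrac{\varepsilon^2}{2}W)(\B)$ for some small $\varepsilon$. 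A uniform second-order expansion then delivers $F_\gamma(\Omega) \leq F_\gamma(\B)$.

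The main obstacle is precisely this uniform Taylor control: the remainder in the second-order expansion of $T$ and $\lambda_1$ at $\B$ must be estimated uniformly over the whole family of admissible Fourier sequences, rather than along a single fixed deformation. Handling this requires combining the quantitative bound above with continuity estimates for the shape Hessians of $T$ and $\lambda_1$ at the ball, together with the regularity of support functions of convex bodies close to the disk. Once this uniform estimate is in place, items~(i)--(iii) combine to give $\gamma^+ = 16/j_{0,1}^2$ and $\gamma^- \leq 32/[j_{0,1}^2(j_{0,1}^2-2)]$.
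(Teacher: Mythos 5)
Your items (ii) and (iii) are correct and are exactly what the paper does: the intermediate-case statement of Lemma \ref{lem-casi} supplies admissible deformations along which $F''_\gamma(\B;V,W)$ is strictly positive (resp.\ negative), and a Taylor expansion along that \emph{single} smooth deformation already produces sets $\Omega_\e$ with $\lambda_1(\Omega_\e)\to\lambda_1(\B)$ and $F_\gamma(\Omega_\e)>F_\gamma(\B)$ (resp.\ $<$), so $\B$ is not a local maximizer (resp.\ minimizer); the characterization of Proposition \ref{gammameno} then gives $\gamma^+\geq 16/j_{0,1}^2$ and $\gamma^-\leq 32/[j_{0,1}^2(j_{0,1}^2-2)]$.

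The gap is in your item (i), i.e.\ the upper bound $\gamma^+\leq 16/j_{0,1}^2$. You propose to prove that $\B$ is a local maximizer of $F_\gamma$ for $\gamma>16/j_{0,1}^2$ by upgrading the negativity of the quadratic form \eqref{alter} to genuine local optimality over all of $\mathcal A$. This requires a second-order expansion whose remainder is controlled \emph{uniformly} over every convex set Hausdorff-close to the disk, and this is precisely the hard point: the quadratic form is coercive only in a weak norm of the perturbation $\alpha$, while the natural remainder estimates involve stronger norms, so ``$F''_\gamma<0$ in all directions $\Rightarrow$ local maximum'' is not automatic. You explicitly acknowledge that this uniform control is needed but do not supply it, so as written item (i) is a programme rather than a proof. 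The paper sidesteps the issue entirely with a global barrier: by the Kohler-Jobin inequality \eqref{KJ} the whole diagram $\E$ lies below the curve $\Gamma_{\mathbb B}$, i.e.\ $L^+(x)\leq c_\B\,x^{(N+2)/2}$ for all $x\geq\lambda_1(\B)$ with equality at $x=\lambda_1(\B)$, hence $\gamma^+$ is at most the slope of $\Gamma_{\mathbb B}$ at $\mathbb V$, namely $(N+2)/[2\,T(\B)\lambda_1(\B)]=16/j_{0,1}^2$ in dimension $2$. Replacing your item (i) by this one-line argument closes the proof; the rest of your reasoning can stand as is.
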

\begin{proof}
In Proposition \ref{gammameno} we have characterized $\gamma^-$ as the infimum of the $\gamma$s for which the ball (here, the disk) is not a local minimizer. In view of the statement of Lemma \ref{lem-casi}, in particular the conclusions concerning the intermediate cases  $32/[j_{0,1}^2(j_{0,1}^2-2)] < \gamma < 16/j_{0,1}^2$, we clearly have $\gamma^- \leq 32/[j_{0,1}^2(j_{0,1}^2-2)]$ and $\gamma^+\geq 16/j_{0,1}^2$. The maximal slope $\gamma^+$ is bounded above by the slope of the Kohler-Jobin curve $\Gamma_{\B}$ at $\mathbb V$, which in dimension 2 equals $16/j_{0,1}^2$. This concludes the proof.
\end{proof}

We conclude the paragraph with two remarks.

\begin{remark}\label{DE}
As already noticed in the proof of Proposition \ref{prop-valorislopes}, $\gamma^+$ agrees with the slope of the Kohler-Jobin curve $\Gamma_{\mathbb B}$ at $\mathbb V$.  This fact is surprising, since the diagrams $\E$ and $\D$ touch the vertex $\mathbb V$ a priori in two different ways. {Notice that $L^+$ is differentiable at $\lambda_1(\B)$, indeed, for every $\gamma<\gamma^+$ and for every $x_n\to x_0:=\lambda_1(\B)$, in view of Lemma \ref{lem-casi} and Proposition \ref{prop-valorislopes}, we may find a family of shapes $\Omega_{\e_n}$ (cf. Definition \ref{admissible}) such that $X(\Omega_{\e_n})=x_n$ and $F_\gamma(\Omega_{\e_n})\geq F_\gamma (\B)$. Therefore, using that $L^+$ is defined as a maximum and exploiting Proposition \ref{prop-valorislopes}, we get
$$
\gamma\leq \liminf_{n\to \infty} \frac{Y(\Omega_{\e_n})-Y(\B)}{X(\Omega_{\e_n})-X(\B)} \leq \liminf_{n\to \infty} \frac{L^+(x_n)-L^+(x_0)}{x_n-x_0} \leq \gamma^+.
$$
By the arbitrariness of $x_n$ and that of $\gamma$, we obtain the differentiability of $L^+$ at $x_0$ with derivative $\gamma^+$.}
\end{remark}

\begin{remark}\label{rem-slopes} The computation done in the planar case could be, in principle, repeated in higher dimension: the description with support functions still applies, but has to be done in a specific way according to $N$ (see also \cite{Bogo}).
\end{remark}

\subsection{A related shape optimization problem}

The relation between the minimization of $F_\gamma$ (introduced in \eqref{Fgamma}) and the boundary of the diagram goes beyond the \emph{local} analysis presented in the previous paragraph, performed near the ball for $F_\gamma$ and near $\mathbb V$ for $L^-$. Actually, if $\Omega^*$ is a \emph{global} minimizer of $F_\gamma$ for some $\gamma$, it is immediate to check that it also minimizes $1/T$ keeping $\lambda_1$ fixed. More precisely, the line $y=\gamma x + F_\gamma (\Omega^*)$ is tangent to $\Gamma^-$ at $(\lambda_1(\Omega^*), T(\Omega^*)^{-1})$ and lies below the diagram $\E$. Also the non existence of minimizers gives some information: if $\inf F_\gamma=q \in \mathbb R$ but the infimum is not attained, this means that the line $y=\gamma x + q$ lies below the diagram and is an asymptote for $\Gamma^-$; if instead the infimum is $-\infty$, it means that for every $q\in \mathbb R$ there exists a point of the diagram which lies below the line $y=\gamma x + q$ (for topological reasons, it means that each of these lines crosses $\Gamma^-$).

In this paragraph we prove the following:
\begin{proposition} There exist two real numbers $0\leq\gamma_0\leq \gamma_1<+\infty$ such that the following facts hold true: in the class $\mathcal A$,
\begin{itemize}
\item[(i)] for every $\gamma \leq \gamma_0$ the ball minimizes $F_\gamma$,
\item[(ii)] for every $\gamma\in (\gamma_0,\gamma_1)$ a minimizer for $F_\gamma$ exists and is not a ball,
\item[(iii)] for every $\gamma >\gamma_1$ the functional $F_\gamma$ does not have a minimizer, 
\end{itemize}
\end{proposition}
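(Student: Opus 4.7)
The plan is to set
\begin{equation*}
\gamma_0 := \sup\{\gamma \in \mathbb R : \B\text{ minimizes } F_\gamma \text{ on } \mathcal A\}, \qquad \gamma_1 := \sup\{\gamma \in \mathbb R : F_\gamma \text{ admits a minimizer on } \mathcal A\},
\end{equation*}
and then verify the bounds $0\leq \gamma_0\leq \gamma_1<+\infty$ together with the three listed properties. The lower bound $\gamma_0\geq 0$ is immediate: for every $\gamma\leq 0$ and every $\Omega\in\mathcal A$, Faber-Krahn \eqref{FK} and Saint-Venant \eqref{SV} yield $F_\gamma(\Omega) = 1/T(\Omega) + |\gamma|\lambda_1(\Omega) \geq 1/T(\B) + |\gamma|\lambda_1(\B) = F_\gamma(\B)$. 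The finiteness $\gamma_1<+\infty$ follows by exhibiting a blow-up sequence in $\mathcal A$: take thin convex cylinders $R_\eta$ with $(N-1)$-dimensional cross-section of size $\eta$ and length of order $\eta^{-(N-1)}$, so that $|R_\eta|=1$. Standard asymptotics based on the cross-section give $\lambda_1(R_\eta)\sim c_1\eta^{-2}$ and $T(R_\eta)\sim c_2\eta^2$ as $\eta\to 0^+$ for positive dimensional constants $c_1,c_2$, hence $F_\gamma(R_\eta)\sim (1/c_2-\gamma c_1)\eta^{-2}\to -\infty$ whenever $\gamma > 1/(c_1c_2)$.

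Next I would prove that both sets whose suprema define $\gamma_0$ and $\gamma_1$ are downward closed. For the first one, if $\B$ minimizes $F_\gamma$ and $\gamma'\leq \gamma$, the identity
\begin{equation*}
F_{\gamma'}(\Omega)-F_{\gamma'}(\B) = \bigl[F_\gamma(\Omega)-F_\gamma(\B)\bigr] + (\gamma-\gamma')\bigl[\lambda_1(\Omega)-\lambda_1(\B)\bigr]
\end{equation*}
displays the left-hand side as a sum of two non-negative terms (using Faber-Krahn for the second), so $\B$ also minimizes $F_{\gamma'}$. A closure argument, passing to the limit in $F_{\gamma_n}(\Omega)\geq F_{\gamma_n}(\B)$ as $\gamma_n\uparrow \gamma_0$ and exploiting continuity of $\gamma\mapsto F_\gamma(\Omega)$, then shows that the supremum $\gamma_0$ is attained.

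The main difficulty is the analogous monotonicity for existence. Let $\Omega^*$ be a minimizer of $F_\gamma$, fix $\gamma'<\gamma$, and take a minimizing sequence $\{\Omega_n\}\subset\mathcal A$ for $F_{\gamma'}$ with $F_{\gamma'}(\Omega_n)\leq F_{\gamma'}(\Omega^*)+1$ (possible since $F_{\gamma'}(\Omega_n)\to\inf F_{\gamma'}\leq F_{\gamma'}(\Omega^*)$). The affine dependence of $F_\gamma$ in $\gamma$ gives the identity $(\gamma-\gamma')\lambda_1(\Omega_n)=F_{\gamma'}(\Omega_n)-F_\gamma(\Omega_n)$, and combining with $F_\gamma(\Omega_n)\geq F_\gamma(\Omega^*)$ yields
\begin{equation*}
(\gamma-\gamma')\lambda_1(\Omega_n)\leq F_{\gamma'}(\Omega_n)-F_\gamma(\Omega^*)\leq (\gamma-\gamma')\lambda_1(\Omega^*)+1,
\end{equation*}
so $\lambda_1(\Omega_n)$ is uniformly bounded. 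Lemma~\ref{lemma.h} then provides a Hausdorff-convergent subsequence with limit $\Omega^{**}\in\mathcal A$, which by continuity of $\lambda_1$ and $T$ is a minimizer of $F_{\gamma'}$. Once this downward closedness is established, the three items follow at once: (i) is the first monotonicity combined with the attainment of $\gamma_0$; (ii) holds because for $\gamma\in(\gamma_0,\gamma_1)$ one can pick $\gamma''\in(\gamma,\gamma_1)$ with $F_{\gamma''}$ admitting a minimizer (by definition of the supremum), and the existence monotonicity then produces a minimizer for $F_\gamma$, which cannot be $\B$ since $\gamma>\gamma_0$; (iii) is the very definition of $\gamma_1$ as a supremum. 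The crucial point -- and the only delicate one -- is that the mere existence of a minimizer at some $\gamma$ supplies, through $\Omega^*$ itself, the $\lambda_1$-bound that drives the compactness argument at every smaller $\gamma$.
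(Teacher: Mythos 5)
Your proposal is correct and follows essentially the same strategy as the paper: define $\gamma_0,\gamma_1$ as suprema of downward-closed sets, use Faber--Krahn and Saint-Venant for $\gamma_0\geq 0$, and drive the key existence-monotonicity step by bounding $\lambda_1$ along a minimizing sequence for $F_{\gamma'}$ via the affine dependence on $\gamma$ and then invoking the compactness of Lemma~\ref{lemma.h}. The only (minor) deviations are that you establish $\gamma_1<+\infty$ with an explicit degenerating family of thin cylinders where the paper instead invokes the reverse P\'olya inequality \eqref{ubE}, and that you add an explicit limiting argument for the attainment of the supremum defining $\gamma_0$ (needed for the closed inequality $\gamma\leq\gamma_0$ in item (i)), a point the paper's proof leaves implicit.
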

\begin{proof}
First we show that the values of $\gamma$ for which $F_\gamma$ admits a minimizer are in an interval of the form $(-\infty, \gamma_1)$, namely if $F_\gamma$ admits a minimizer for some $\gamma$, then all the functionals $F_{\gamma'}$ with $\gamma'<\gamma$ admit a minimizer: by the relation between $\gamma'$ and $\gamma$, it is immediate to check that $F_{\gamma'}$ is bounded below, so that its infimum $\ell'$ is finite; a minimizing sequence $\Omega_n$ satisfies, for $n$ large enough,
$$
\ell' + 1 \geq  F_{\gamma'}(\Omega_n) = F_{\gamma}(\Omega_n) + (\gamma-\gamma')\X(\Omega_n) \geq \ell + (\gamma-\gamma')\X(\Omega_n)
$$
with $\ell:=\inf F_\gamma\in \mathbb R$. In particular the $\X$ coordinate functional is bounded along the sequence $\Omega_n$. This condition, together with the assumption $|\Omega_n|=1$ for every $n$, provides the compactness which ensures the existence of a minimizer (see Lemma \ref{lemma.h}).

The value of $\gamma_1$ is not known, nevertheless, we claim that it is a positive number. More precisely, we prove that $1\leq \gamma_1 \leq 1/C_N$, being $0< C_N < 1 $ the positive dimensional constant appearing in \eqref{ubE}. In view of the P\'olya inequality \eqref{P}, we have 
$$
\Y-\gamma \X \geq (1-\gamma) \X \geq (1-\gamma)\X(\B),
$$
in particular, if $\gamma<1$, we infer that $F_\gamma$ is bounded below and along a minimizing sequence the functional $\X$ is bounded. As above, these two facts imply existence of minimizers for every such $\gamma$. On the other hand, using the estimate \eqref{ubE}, stating that $Y\leq \X / C_N  $, we have
$$
\Y-\gamma \X \leq \left(1/C_N-\gamma\right) \X
$$ 
which leads to $\inf F_\gamma = -\infty$ whenever $\gamma>1/C_N$. 

Let us investigate the role of the ball. In view of the Faber-Krahn inequality \eqref{FK}, it is immediate to check that the $\gamma$s for which the ball is optimal are in an interval of the form $(-\infty, \gamma_0)$, with $\gamma_0\leq \gamma_1$: as before, if $\B$ is a minimizer of $F_\gamma$, then it is a minimizer also for $\gamma'<\gamma$, since
$$
F_{\gamma'}(\Omega)= F_\gamma(\Omega) + (\gamma-\gamma')\X(\Omega) \geq F_\gamma(\B) + (\gamma-\gamma')\X(\B) = F_{\gamma'}(\B).
$$
This proves the proposition.
\end{proof}

The precise values of $\gamma_0$ and $\gamma_1$ are unknown and a priori could coincide. Note that one could also address the maximization problem of the family $F_\gamma$, and arrive to similar conclusions.
 
\subsection{Topology of the diagram} 

In this paragraph we investigate the topology of $\E$. In Proposition \ref{chiusoecpa} we show that the diagram is closed and connected by arcs. Then, in Proposition \ref{noholes1} we exclude the presence of unbounded holes.  

\begin{proposition}\label{chiusoecpa}
The diagram $\E$ is closed and connected by arcs.
\end{proposition}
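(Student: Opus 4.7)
My plan is to argue by sequential closedness. Take any sequence $(x_n,y_n)\in\E$ converging to some $(x,y)\in\mathbb R^2$, and pick representatives $\Omega_n\in\mathcal A$ with $\lambda_1(\Omega_n)=x_n$ and $T(\Omega_n)^{-1}=y_n$. Since the sequence $(x_n)$ is convergent, $\sup_n\lambda_1(\Omega_n)<+\infty$, and by construction $|\Omega_n|=1$. Hence Lemma~\ref{lemma.h} applies: along a subsequence, $\Omega_n$ converges in the Hausdorff metric to a convex set $\Omega$, and by the continuity statement of that lemma we get $|\Omega|=1$, $\lambda_1(\Omega)=x$ and $T(\Omega)^{-1}=y$. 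Therefore $\Omega\in\mathcal A$ and $(x,y)\in\E$.

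\textbf{Arc-connectedness.} My plan is to exhibit, for any two points of $\E$, a continuous path in $\E$ joining them that passes through the vertex $\mathbb V=(\lambda_1(\B),T(\B)^{-1})$. Fix $(x_0,y_0),(x_1,y_1)\in\E$ with associated shapes $\Omega_0,\Omega_1\in\mathcal A$. I apply the continuous Steiner symmetrization of Lemma~\ref{CSS-cvx} to each $\Omega_i$, obtaining deformations $t\mapsto\phi_t(\Omega_i)$, $t\in[0,+\infty]$, that remain in $\mathcal A$ (convexity and unit volume are preserved), start at $\Omega_i$ and end at $\B$, and along which $\lambda_1$ and $T^{-1}$ depend continuously on $t$. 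The composed map
\[
t\mapsto(\X(\phi_t(\Omega_i)),\Y(\phi_t(\Omega_i)))
\]
is then a continuous curve in $\E$ joining $(x_i,y_i)$ to $\mathbb V$. Concatenating the two curves (the second traversed in reverse) yields a continuous path in $\E$ from $(x_0,y_0)$ to $(x_1,y_1)$.

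I do not expect any real obstacle here: both statements are essentially corollaries of the tools already gathered in Section~\ref{sec-prel}. The only point that requires a moment's care is the boundedness hypothesis of Lemma~\ref{lemma.h} in the closedness step, which is granted for free by the convergence of $(x_n)$ and the volume constraint in the strong form. As an alternative for arc-connectedness, one could use the normalized Minkowski curve \eqref{mink} between $\Omega_0$ and $\Omega_1$, which provides a direct continuous path in $\mathcal A$ (and hence in $\E$) without passing through $\mathbb V$; the Steiner route, however, has the pedagogical advantage of making the vertex $\mathbb V$ a common hub, which will be convenient for later statements about the structure of $\partial\E$.
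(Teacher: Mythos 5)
Your proof is correct and follows essentially the same route as the paper: closedness via the compactness and continuity of Lemma~\ref{lemma.h} applied to representative sequences in $\mathcal A$, and arc-connectedness by joining any point to the vertex $\mathbb V$ through the continuous Steiner symmetrization of Lemma~\ref{CSS-cvx} and concatenating. The extra details you supply (the boundedness hypotheses being granted by the convergence of $(x_n)$ and the unit-volume constraint) are exactly the ones the paper leaves implicit.
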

\begin{proof} The closure is a consequence of Lemma \ref{lemma.h}, in which continuity and compactness of sequences of bounded convex sets are stated. As for the connectedness, it is a consequence of the continuous Steiner symmetrization (see \S \ref{sec-CSS}): any point $(x,y)\in \E$ can be connected to the vertex $\mathbb V$ following the continuous path obtained composing $(\X, \Y)$ with a continuous Steiner symmetrization of a set $\Omega$ such that $\lambda_1(\Omega)=x$, $T(\Omega)^{-1}=y$.
\end{proof}

Let us show that no unbounded hole can occur in the diagram.

\begin{proposition}\label{noholes1}
The boundary $\partial \E$ has only one unbounded connected component.
 \end{proposition}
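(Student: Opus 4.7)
\smallskip
\noindent\textbf{Proof plan.}
I would argue by contradiction: suppose $\partial \E$ admits at least two distinct unbounded connected components $C_1$ and $C_2$. The strategy is to locate one of them as the ``outer'' boundary built from the extremal graphs $\Gamma^\pm$ of \eqref{bordi}, and derive a contradiction with the existence of the other by using the continuous Steiner symmetrization to show that an unbounded ``interior hole'' of $\E$ cannot exist.

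\smallskip
\emph{Step 1 (outer boundary).} Using the left-continuity of $L^\pm$ (Propositions \ref{prop-Lpiu} and \ref{prop-Lmeno}), the monotonicity of $L^-$, and the closedness of $\E$ (Proposition \ref{chiusoecpa}), I would exhibit an unbounded connected subset $\mathcal O$ of $\partial \E$ that traces the extremal upper and lower boundaries. The two graphs $\Gamma^\pm$ both contain the vertex $\mathbb V$ (since $L^+(\lambda_1(\B)) = L^-(\lambda_1(\B)) = T(\B)^{-1}$) and both diverge to $+\infty$; any discontinuities of $L^\pm$ produce additional boundary points whose limits lie in $\E$ by closedness and therefore in $\partial\E$, so that $\mathcal O$ is unbounded and connected.

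\smallskip
\emph{Step 2 (locate $C_2$).} Without loss of generality $\mathcal O \subset C_1$, so that $C_2$ is disjoint from $\mathcal O$. In particular $\mathbb V \notin C_2$, and every $(x,y) \in C_2 \subset \E$ must satisfy the strict inequalities $L^-(x) < y < L^+(x)$; otherwise $(x,y)$ would belong to $\Gamma^\pm \subset \mathcal O$. Since $\E$ lies in the cone-shaped region $\mathcal R$ from \eqref{DeU} and $C_2$ is unbounded, the only possibility is that $x\to +\infty$ along $C_2$. Thus $C_2$ is the boundary of an unbounded component $H$ of $\mathbb R^2 \setminus \E$ lying in the open ``interior strip'' between $\Gamma^-$ and $\Gamma^+$.

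\smallskip
\emph{Step 3 (contradiction).} I would rule out such an unbounded interior hole as follows. Pick $(x_*, y_*) \in C_2$ with $x_*$ large, and let $\Omega_*\in\mathcal A$ be an associated convex set. By Lemma \ref{CSS-cvx} and Remark \ref{rem-CSS}, the continuous Steiner symmetrization produces a continuous curve in $\E$ from $(x_*,y_*)$ to $\mathbb V$ along which both coordinates decrease monotonically. Combining this with the analogous curves issued from points of $\mathcal O$ with the same or a nearby abscissa, and with normalized Minkowski paths connecting admissible competitors in $\mathcal A$ (cf. Section \ref{sec-CSS}), one assembles a continuous path in $\E$ from $C_1$ to a neighborhood of $(x_*,y_*)$. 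This path must either enter $H$ (impossible since it lies in $\E$) or pin down $(x_*,y_*)\in C_1 \cap C_2$, contradicting the assumed disjointness.

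\smallskip
\emph{Main obstacle.} The crux is Step 3: ruling out an unbounded interior hole requires showing that, for $x$ large, the vertical slice $\E\cap(\{x\}\times\mathbb R)$ cannot split in a way that persists as $x\to+\infty$. The essential difficulty is that none of the available deformations is, by itself, tailor-made for this: Steiner symmetrization preserves volume but only decreases $\lambda_1$; the normalized Minkowski interpolation preserves volume and convexity but does not fix $\lambda_1$; and simple dilations leave $\E$. One therefore needs to combine these tools to produce a continuous family of admissible competitors that sweeps across the hole $H$ at large $x$, and this combinatorial/topological step is where the technical work lies.
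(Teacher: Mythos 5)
Your overall strategy coincides with the paper's: argue by contradiction, reduce to an unbounded hole $A$ lying strictly between $\Gamma^-$ and $\Gamma^+$, and try to sweep across it at large abscissa with a normalized Minkowski path joining optimal sets for $L^-(x_1)$ and $L^+(x_1)$. However, the proposal stops exactly at the decisive point, which you yourself label the ``main obstacle'': you never show that such a path stays far to the right, and without that the argument does not close. This is a genuine gap, and it is precisely the content of the paper's proof. The missing ingredient is quantitative: writing $\Omega_t = t\Omega_1 \oplus (1-t)\Phi(\Omega_0)$ and $\widetilde\Omega_t=|\Omega_t|^{-1/N}\Omega_t$, one has $|\Omega_t|\geq 1$ by Brunn--Minkowski, the Hersh--Protter inequality gives $\lambda_1(\Omega_t)\geq \pi^2/(4\rho(\Omega_t)^2)$, and the rigid motion $\Phi$ can be chosen so that $\rho(\Omega_t)=t\rho(\Omega_1)+(1-t)\rho(\Omega_0)$. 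Hence
$$
\X(\widetilde\Omega_t)\;\geq\;\frac{\pi^2}{4\max\bigl(\rho(\Omega_0)^2,\rho(\Omega_1)^2\bigr)},
$$
and since $\lambda_1(\Omega_i)=x_1$ forces $\rho(\Omega_i)\to 0$ as $x_1\to+\infty$ (the inscribed ball gives $x_1\leq \lambda_1(B_{\rho(\Omega_i)})=c\,\rho(\Omega_i)^{-2}$), the whole path is confined to a right half-plane $\{x\geq M(x_1)\}$ with $M(x_1)\to+\infty$. For $x_1$ large this path joins $(x_1,L^-(x_1))$ to $(x_1,L^+(x_1))$ entirely within the far-right strip where the unbounded hole must live, so it cuts $A$ --- a contradiction, since the path lies in $\E$.

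A secondary issue is the dichotomy closing your Step 3: a continuous path in $\E$ terminating near $(x_*,y_*)\in C_2$ neither enters $H$ nor forces $(x_*,y_*)\in C_1$; approaching a boundary point of a hole from within $\E$ contradicts nothing. The contradiction must come from a separation argument --- a curve in $\E$ joining the lower to the upper graph at large abscissa necessarily meets the unbounded hole sandwiched between them --- and that is exactly what the inradius estimate above makes possible. Note also that the Steiner symmetrization curves you invoke in Step 3 move \emph{towards} $\mathbb V$, i.e.\ to the left, which is the wrong direction for crossing a hole that escapes to $x=+\infty$; only the Minkowski path, with the abscissa bound, does the job.
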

 \begin{proof}
 Some ideas of the following proof are inspired by {\cite{FL}}, in which the authors study the Blaschke-Santal\'o diagram of the pair (perimeter, $\lambda_1$), under volume constraint.  

In order to prove the statement, it is enough to exclude the presence of unbounded holes into the diagram. Assume by contradiction that there exists an open set $A$ such that:
 \begin{itemize}
 \item[i)] $A$ is simply connected and $A\cap \E=\emptyset$,
 \item[ii)] for every point $(x,y)\in A$, there holds $L^-(x) <y < L^+(x)$,
 \item[iii)] $A$ is unbounded, namely it intersects every half plane $\{x\geq h\}$, for every $h>\lambda_1(\B)$.
 \end{itemize}

Let $x_1>\lambda_1(\B)$ and let $\Phi$ be a rigid motion, which will be suitably chosen later.
For two optimal sets $\Omega_1$ and $\Omega_0$ of ${L}^+(x_1)$ and $L^-(x_1)$, respectively, 
denote by $\Omega_t$ the Minkowski sum $t \Omega_1 \oplus (1-t)\Phi(\Omega_0)$, $t\in [0,1]$, and set the normalized set $\widetilde\Omega_t:= |\Omega_t|^{-1/N}\Omega_t$. As already noticed in Remark \ref{rigid}, the range of the function $t\mapsto (\X(\widetilde\Omega_t), \Y(\widetilde\Omega_t))$ is a continuous curve in $\E$, connecting $(\X(\Omega_0), \Y(\Omega_0))$ to $(\X(\Omega_1), \Y(\Omega_1))$.

We look for a lower bound on the abscissa of the points of such a curve: exploiting the $-2$ homogeneity of $\lambda_1$ we clearly have
\begin{equation}\label{sc1}
\X(\widetilde\Omega_t)=\lambda_1(\widetilde{\Omega}_t) = |\Omega_t|^{2/N} \lambda_1(\Omega_t).
\end{equation}
In view of the Brunn-Minkowski inequality (see \cite{S}), we immediately get $|\Omega_t|\geq 1$. As for $\lambda_1(\Omega_t)$, by the \emph{Hersh-Protter inequality} \cite{Hersh, P},
$$
\lambda_1(\Omega_t)\geq \frac{\pi^2}{4\rho(\Omega_t)^2},
$$
where $\rho(\cdot)$ is the inradius. In general, the inradius of a Minkowski sum is greater than or equal to the sum of the inradii of the addenda; however, there exists a rigid motion $\Phi$ which gives the equality: 
$$
\rho(\Omega_t) = t \rho(\Omega_1) + (1-t)\rho(\Phi(\Omega_0))= t \rho(\Omega_1)+(1-t)\rho(\Omega_0).
$$
These last two facts imply that \eqref{sc1} can be further bounded from below as follows:
\begin{equation}\label{sc2}
\X(\widetilde\Omega_t)\geq \frac{{\pi^2}}{{4}[t \rho(\Omega_1) + (1-t) \rho(\Omega_0)]^2} \geq  \frac{{\pi^2}}{{4}\max \left( \rho(\Omega_0)^2; \rho(\Omega_1)^2 \right)}.
\end{equation}
Notice that if we consider $x_1\to +\infty$, then both the inradii in the right-hand side will go to zero, so that $\X(\widetilde\Omega_t)$ will diverge to $+\infty$. 

Therefore, by taking $x_1$ large enough, the path $t\mapsto (\X(\widetilde{\Omega}_t), \Y(\widetilde{\Omega}_t) )$ cuts the set $A$, in contradiction with (i)-(iii) above.
 \end{proof}

Notice that if $L^\pm$ were continuous, in view of the last proposition, we would have that the unbounded connected component of $\partial \E$ coincides with $\Gamma^+\cup \Gamma^-$.

\section{The diagram $\D$}\label{sec-D}
This section is devoted to the proof of Theorem \ref{thmD}, in which the analysis of the diagram $\D$ with volume constraint is taken in the weak form. The study of $\D$ is closely related to that of $\E$: on one hand, the setting is less rigid  (e.g., contractions of admissible sets are now admissible) and many properties of $\E$ are easily inherited by $\D$; on the other hand, some properties of $\D$ are a posteriori verified by sets of unit volume, allowing us surprisingly to deduce some unnoticed properties of $\E$. 

\subsection{Upper and lower boundaries}

As already done for $\E$, it is natural to introduce two functions which bound from above and below the diagram. Here, in accordance with \eqref{Lpm}, we define
\begin{align}
\widehat{L}^+(x) & \coloneqq \max \left\{ \frac{1}{T(\Omega)}\ :\ \Omega \hbox{ convex, } |\Omega|\leq 1  \right\},\label{Lphat}
\\
\widehat{L}^-(x) & \coloneqq \min \left\{ \frac{1}{T(\Omega)}\ :\ \Omega \hbox{ convex, } |\Omega|\leq 1\right\},\label{Lmhat}
\end{align}
for $x\geq \lambda_1(\B)$.
The existence of the maximum and of the minimum is a direct consequence of Lemma~\ref{lemma.h}.
As already pointed out in the introduction, the unique optimal set associated to $\widehat{L}^+(x)$ is the ball $r\B$, with $r=\sqrt{\lambda_1(\B)/x}$. Moreover,  
\begin{equation}\label{formulaL+}
\widehat{L}^+(x)= \frac{1}{T(\B)\lambda_1(\B)^{(N+2)/2}}\,x^{(N+2)/2},
\end{equation}
and its graph is the Kohler-Jobin curve:
$$
\Gamma_{\mathbb B} = \{(x,\widehat{L}^+(x))\ :\ x\geq \lambda_1(\B)\}.
$$
As a byproduct, we infer that $\widehat{L}^+$ is a continuous curve, increasing, with slope at $\lambda_1(\B)$ equal to $(N+2)/[2 T(\B)\lambda_1(\B)]$ ($=\gamma^+$ in dimension $N=2$).

\begin{remark}\label{rem41} The knowledge of the optimal sets on $\Gamma_{\mathbb B}$ provides a lower bound on the volume of the convex sets in $\D$. Let $\Omega$ be a convex set of volume at most 1, associated to the point $(x,y)$. In view of Lemma~\ref{CSS-cvx}, the ball $B_\Omega$ of volume $|\Omega|$ is necessarily located in the lower left part of $(x,y)$, namely at some $(x_1,L^+(x_1))$ with $x_1\leq x$ and $L^+(x_1)\leq y$. Since $T(B_\Omega)|B_\Omega|^{-(N+2)/N}=T(\mathbb B)$ we infer that
\[
|\Omega|=|B_\Omega|=\left(\frac{T(B_\Omega)}{T(\B)} \right) ^{\frac{N}{N+2}}\geq\left(\frac{\Y(\B)}{y} \right) ^{\frac{N}{N+2}}.
\]
Moreover, from $\lambda_1(B_\Omega)|B_\Omega|=\lambda_1(\mathbb B)$ we also have
\[
|\Omega|=|B_\Omega|=\frac{\la_1(\B)}{\la_1(B_\Omega)}\geq \frac{\X(\B)}{x} 
\]
which combined with the previous one yields the lower bound on the measure of $\Omega$, i.e.,
\[
|\Omega|\geq \max\left\{\frac{\X(\B)}{x} ,\left(\frac{\Y(\B)}{y} \right) ^{\frac{N}{N+2}}\right\}.
\]

In particular, as one may expect, sets associated to points near the vertex $\mathbb V$ have almost unit volume.
Notice that, if the value of $x$ and $y$ are not explicitly known, but only the upper bounds $x\leq x_2$ and $y\leq y_2$ are available, one may say
\[
|\Omega| \geq \max\left\{\frac{\X(\B)}{x_2}, \left(\frac{\Y(\B)}{y_2} \right) ^{\frac{N}{N+2}}\right\}.
\]
\end{remark} 

The properties of $\widehat{L}^-$ are less evident and deserve a deeper analysis. In accordance with \eqref{bordi}, we denote by $\widehat{\Gamma}^-$ its graph:
$$
\widehat{\Gamma}^-:=\{(x,\widehat{L}^-(x)\ :\ x\geq \lambda_1(\B)\}.
$$

\begin{proposition}\label{prop-L2} The function $\widehat{L}^-$ is continuous and increasing.
\end{proposition}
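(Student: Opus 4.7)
The plan is to prove monotonicity first and then deduce continuity from lower and upper semicontinuity. The key new ingredient, compared with the analysis of $L^-$ in Proposition \ref{prop-Lmeno}, is that the weak volume constraint leaves room for both dilations and contractions, a flexibility that will be used decisively for right-continuity. Attainment of the minimum defining $\widehat{L}^-$ is already granted, as noted just after \eqref{Lmhat}.

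For monotonicity, I would fix $\lambda_1(\B)\leq x_1<x_2$ and pick a minimizer $\Omega^*$ for $\widehat{L}^-(x_2)$; it is convex with $|\Omega^*|\leq 1$ and $\X(\Omega^*)=x_2$. A competitor for $\widehat{L}^-(x_1)$ with no larger $\Y$ will be built by concatenating a continuous Steiner symmetrization with a ball homothety. Along the Steiner path $t\mapsto\phi_t(\Omega^*)$ of Lemma \ref{CSS-cvx}, both $\X$ and $\Y$ decrease continuously while the volume stays equal to $|\Omega^*|$, and $\X$ sweeps $[\lambda_1(\B)/|\Omega^*|^{2/N},\,x_2]$ before reaching the terminal ball $|\Omega^*|^{1/N}\B$. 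If $x_1$ lies in this interval, the intermediate value theorem yields a convex competitor at level $x_1$ with $\Y\leq\widehat{L}^-(x_2)$. Otherwise $x_1<\lambda_1(\B)/|\Omega^*|^{2/N}$, and I would continue by dilating the terminal ball into $r\B$ for $r\in[|\Omega^*|^{1/N},1]$: the volume stays in $[|\Omega^*|,1]$, $\X$ decreases continuously down to $\lambda_1(\B)$, and $\Y$ strictly decreases. In either case $\widehat{L}^-(x_1)\leq \widehat{L}^-(x_2)$, with strict inequality since at least one of the two stages strictly lowers $\Y$.

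Lower semicontinuity at $x$ is a standard consequence of Lemma \ref{lemma.h}: for $x_n\to x$ and minimizers $\Omega_n$ realizing $\widehat{L}^-(x_n)$, the family $\{\Omega_n\}$ satisfies $|\Omega_n|\leq 1$ and has bounded $\X$, so up to subsequence it Hausdorff-converges to a convex set $\Omega$ with $|\Omega|\leq 1$, $\X(\Omega)=x$, and $\Y(\Omega)=\liminf_n\widehat{L}^-(x_n)$, which is a competitor for $\widehat{L}^-(x)$. Upper semicontinuity is the step where the weak volume constraint is genuinely exploited. For $x_n\to x^-$, the monotonicity above gives directly $\widehat{L}^-(x_n)\leq\widehat{L}^-(x)$. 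For $x_n\to x^+$, I would pick an optimal $\Omega^*$ for $\widehat{L}^-(x)$ and contract it by $t_n:=\sqrt{x/x_n}<1$: the volume $t_n^N|\Omega^*|$ stays below one, $\X(t_n\Omega^*)=x_n$, and by homogeneity
\[
\widehat{L}^-(x_n)\leq \Y(t_n\Omega^*)=(x_n/x)^{(N+2)/2}\,\widehat{L}^-(x)\ \to\ \widehat{L}^-(x) \quad\text{as } n\to\infty.
\]
The main obstacle is the bridging construction in the monotonicity step, since one must juxtapose a volume-preserving Steiner path with a volume-changing ball homothety while remaining inside $\{|\Omega|\leq 1\}$; the rest is routine homogeneity and compactness, but only the weak volume constraint makes the contraction trick available, which is precisely why $\widehat{L}^-$ is continuous even from the right, in contrast with what is known for $L^-$.
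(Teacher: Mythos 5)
Your proof is correct and follows essentially the same route as the paper: lower semicontinuity from the compactness of Lemma \ref{lemma.h}, monotonicity and left-continuity via the continuous Steiner symmetrization, and right-continuity via contractions $t\Omega^*$ of an optimal set, which is exactly the paper's ``new'' step (phrased there by contradiction along the curve $\Gamma_{\Omega^*}$ rather than with the explicit factor $t_n=\sqrt{x/x_n}$). Your only addition is the careful bridging of the Steiner path with a ball dilation when $|\Omega^*|<1$, a detail the paper leaves implicit.
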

\begin{proof} The proof is divided into four steps: the lower semicontinuity, the continuity from the left, the continuity from the right, and the monotonicity of $\widehat{L}^-$. The first property follows by the definition of $\widehat{L}^-$ together with the compactness Lemma \ref{lemma.h}; the continuity from the left and the monotonicity are consequence of the monotonicity of the continuous Steiner symmetrization (see Remark \ref{rem-CSS}). We omit here the complete proof of these steps, since it would retrace that of Propositions \ref{prop-Lpiu} and  \ref{prop-Lmeno}.

The new part of this proof is the continuity from the right, that we detail here. Assume by contradiction that for some $x$ and $x_n \to x^+$ there holds $\widehat{L}^-(x) < \lim_n \widehat{L}^-(x_n)$. Let $\Omega$ be optimal for $\widehat{L}^-$ and $\Omega_t=(1-t)\Omega$, $t\in [0,1)$ be the continuous family of its contractions. In the diagram, these deformations correspond to the curve $\eta(t)= (x(t), y(t))$, with $x(t)=\lambda_1(\Omega_t)$ and $y(t)=T(\Omega_t)^{-1}$. Such curve starts at $t=0$ from $(x,L^-(x))$ and its vertical component is a continuous increasing function of the horizontal component (see Remark \ref{rem-KJ}). Since $x(t)$ runs from $x$ to $+\infty$, we may find $t_n$ such that $x(t_n)=x_n$, for $n\in \mathbb N$. By definition we have $\widehat{L}^-(x_n)\leq 1/T(\Omega_{t_n})$. Passing to the limit as $n\to \infty$  in both sides, we get $\lim_{n} L^-(x_n)\leq L^-(x)$, which is absurd.
 \end{proof}

\begin{proposition}\label{prop43}
The two functions $\widehat{L}^+$ and $\widehat{L}^-$ have the same value only at the point $\lambda_1(\B)$, and this value is equal to $1/T(\B)$.
\end{proposition}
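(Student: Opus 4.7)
The plan is to split the analysis at $x=\lambda_1(\B)$ and at $x>\lambda_1(\B)$, and to exploit the rigidity parts of the Faber--Krahn and Kohler--Jobin inequalities. At the vertex $x=\lambda_1(\B)$, I would apply \eqref{FK} to an arbitrary convex $\Omega$ with $|\Omega|\leq 1$ and $\lambda_1(\Omega)=\lambda_1(\B)$: this gives
\[
\lambda_1(\B)=\lambda_1(\Omega)\geq \lambda_1(\B)\,|\Omega|^{-2/N}\geq \lambda_1(\B),
\]
forcing $|\Omega|=1$ together with the equality case in Faber--Krahn, hence $\Omega$ must be the unit ball. So the only admissible competitor at $x=\lambda_1(\B)$ is $\B$, and both $\widehat{L}^+(\lambda_1(\B))$ and $\widehat{L}^-(\lambda_1(\B))$ reduce to $1/T(\B)$.

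For $x>\lambda_1(\B)$, the identity \eqref{formulaL+} already says that $\widehat{L}^+(x)=1/T(B_r)$ with $r=\sqrt{\lambda_1(\B)/x}<1$, the shrunken ball $B_r$ being the unique optimizer. To strictly separate $\widehat{L}^-$ from $\widehat{L}^+$, I would produce a non-spherical convex competitor for $\widehat{L}^-(x)$. Such a competitor is supplied by the opening discussion of Section \ref{sec-E}: for every $x\geq \lambda_1(\B)$ the family $\mathcal A(x)$ of unit-volume convex sets with first eigenvalue $x$ is non-empty, via a thin parallelepiped with $\lambda_1>x$ continuously symmetrized by Lemma \ref{CSS-cvx} together with the intermediate value theorem. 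Moreover, when $x>\lambda_1(\B)$, $\mathcal A(x)$ cannot contain any ball, since the unique unit-volume ball has first eigenvalue $\lambda_1(\B)\neq x$. Fix any such $\Omega\in \mathcal A(x)$: it is a legitimate competitor for $\widehat{L}^-(x)$, and the strict form of the Kohler--Jobin inequality \eqref{KJ}, whose equality case is rigidly the ball, applied at the common value $\lambda_1(\Omega)=\lambda_1(B_r)=x$, yields
\[
T(\Omega)^{2/(N+2)}\lambda_1(\Omega) > T(B_r)^{2/(N+2)}\lambda_1(B_r),
\]
equivalently $1/T(\Omega)<1/T(B_r)=\widehat{L}^+(x)$. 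Hence $\widehat{L}^-(x)\leq 1/T(\Omega)<\widehat{L}^+(x)$.

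I do not anticipate any serious obstacle: the argument is essentially a rigidity statement. The one ingredient that has to be checked carefully is the existence, for every $x>\lambda_1(\B)$, of a non-ball convex shape of unit volume with $\lambda_1=x$, and this is precisely what the continuous Steiner symmetrization construction at the beginning of Section \ref{sec-E} supplies. The role of Kohler--Jobin is crucial in that its equality case is exactly the family of balls, which is what turns the non-sphericity of $\Omega$ into the strict inequality separating the two boundary curves away from the vertex.
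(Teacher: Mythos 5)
Your proof is correct, and it reaches the conclusion by a somewhat more economical route than the paper. The paper handles the vertex by noting that $\widehat{L}^-(\lambda_1(\B))\leq 1/T(\B)$ because $\B$ is a competitor and $\widehat{L}^-(\lambda_1(\B))\geq 1/T(\B)$ by Saint--Venant \eqref{SV}, whereas you invoke the rigidity of Faber--Krahn to show $\B$ is the \emph{only} competitor at that abscissa; both are fine, yours being slightly stronger than needed. For $x>\lambda_1(\B)$, both arguments ultimately rest on the strict Kohler--Jobin inequality for a non-ball convex competitor with first eigenvalue $x$, but the competitors are produced differently: the paper perturbs the small ball $r\B$ into a nearby non-ball convex set of the same volume and then dilates it (obtaining a set of volume strictly less than $1$, with a careful choice to keep the volume admissible), while you simply take any $\Omega\in\mathcal A(x)$, whose existence was already established at the start of Section \ref{sec-E} via a thin parallelepiped and the continuous Steiner symmetrization, and observe it cannot be a ball since the unit-volume ball has eigenvalue $\lambda_1(\B)\neq x$. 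Your version makes the role of the scale-invariant quantity $T^{2/(N+2)}\lambda_1$ transparent (at equal eigenvalues, strict Kohler--Jobin translates directly into $T(\Omega)>T(r\B)$) and avoids the paper's dilation bookkeeping; the paper's construction has the minor added feature of exhibiting competitors of volume strictly below $1$ arbitrarily close to the Kohler--Jobin curve. The one hypothesis you rely on, namely that equality in \eqref{KJ} holds only for balls, is also used (implicitly) in the paper's last displayed inequality, so you are not assuming more than the authors do.
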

\begin{proof}
The coincidence of $\widehat{L}^+$ and $\widehat{L}^-$ at $x=\lambda_1(\mathbb B)$ is trivial, since $\widehat{L}^-(\lambda_1(\B)) \leq 1/T(\B)$ just by definition of $\widehat{L}^-$ while $\widehat{L}^-(\lambda_1(\B)) \geq 1/T(\B)$ thanks to \eqref{SV}. Then, by \eqref{formulaL+} we deduce that $\widehat{L}^+(x(\B))=\widehat{L}^-(x(\B))$.

Let now $x> \lambda_1(\B)$ be fixed. If we find a convex set $\Omega$ such that $|\Omega|\leq 1$, $\lambda_1(\Omega)=x$, and $1/T(\Omega) < \widehat{L}^+(x)$, we are done, since this would imply $\widehat{L}^-(x)\leq 1/T(\Omega)<\widehat{L}^+(x)$. In view of \eqref{formulaL+}, $x= \lambda_1(r\B)$ and $\widehat{L}^+(x)=1/T(r\B)$, with $r=\sqrt{\lambda_1(\B)/x}<1$. Let $\Omega$ be a convex set with the same volume of $r\B$, namely $|\Omega|=|r\B|=r^{N}$, and such that
\begin{equation}\label{stime}
x< \lambda_1(\Omega)<\frac{x}{r^N}.
\end{equation}
The former inequality is always true in view of \eqref{FK} taking $\Omega$ different from a ball, while the latter is easily satisfied if, e.g., $\Omega$ is chosen close enough to the ball $r\B$.
Dilating $\Omega$ of a factor $t>1$, the first Dirichlet eigenvalue decreases, in particular, by choosing $t\coloneqq \sqrt{\lambda_1(\Omega)/x}$, we get $\lambda_1(t\Omega) = x$. Moreover, thanks to the second inequality in \eqref{stime}, we have $|t\Omega|= t^N |\Omega| <r^{N/2}<1$, so that $t\Omega$ has volume less than or equal to 1 and $\lambda_1(t\Omega)=x$. By a direct computation, we get
$$
\frac{1}{T(t\Omega)} =\frac{1}{t^{(N+2)}T(\Omega)} = \left(\frac{\lambda_1(r\B)}{ \lambda_1(\Omega)  T(\Omega)^{2/(N+2)}} \right)^{(N+2)/{2}} < \frac{1}{T(r\B)}=\widehat{L}^+(x),
$$
where the last inequality follows from the Kohler-Jobin estimate \eqref{KJ}. This concludes the proof.
\end{proof}

Further properties of $\widehat{L}^-$ are given in \S \ref{back}.

\subsection{Topology of the diagram}

In the previous paragraph we have shown that the diagram $\D$ is enclosed between two increasing curves, both starting from $\mathbb V$, diverging to $+\infty$ as $x\to +\infty$, and with no other intersection than $\mathbb V$. They are defined as the graphs of $\widehat{L}^+$ and $\widehat{L}^-$, denoted by $\Gamma_{\mathbb B}$ and $\widehat{\Gamma}^-$, respectively.

\begin{proposition}\label{prop-Dcvx}
The set $\D$ is the region between the curves $\Gamma_{\mathbb B}$ and $\widehat{\Gamma}^-$. In particular, it is closed, simply connected, and convex in the $x$ and $y$ directions.
\end{proposition}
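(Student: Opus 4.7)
The plan is to identify $\D$ with the closed region
$$
R := \bigl\{(x,y)\in\mathbb R^2 : \ x\geq \lambda_1(\B),\ \widehat L^-(x)\leq y\leq \widehat L^+(x)\bigr\},
$$
since once this identification is established, all the topological and geometric claims will follow almost for free from the properties of $\widehat L^\pm$ collected in Proposition \ref{prop-L2}, formula \eqref{formulaL+}, and Proposition \ref{prop43}.

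The inclusion $\D\subset R$ is immediate from the very definition of $\widehat L^\pm$ as extrema of $\Y$ on level sets of $\X$ inside $\D$. For the reverse inclusion I would run a scanning (intermediate value) argument along homothety curves of Kohler-Jobin type. Fix $(x,y)\in R$ with strict inequalities $\widehat L^-(x)<y<\widehat L^+(x)$; the endpoints belong to $\D$ by construction. For each $\bar x\in[\lambda_1(\B), x]$, Lemma \ref{lemma.h} supplies a convex set $\Omega_{\bar x}$ with $|\Omega_{\bar x}|\leq 1$, $\lambda_1(\Omega_{\bar x})=\bar x$ and $T(\Omega_{\bar x})^{-1}=\widehat L^-(\bar x)$. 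Its contraction $t\,\Omega_{\bar x}$ with $t=\sqrt{\bar x/x}\leq 1$ still has volume at most one, first eigenvalue exactly $x$, and torsional rigidity inverse
$$
f(\bar x) := \Bigl(\frac{x}{\bar x}\Bigr)^{(N+2)/2} \widehat L^-(\bar x).
$$
Continuity of $\widehat L^-$ (Proposition \ref{prop-L2}) makes $f$ continuous on $[\lambda_1(\B), x]$. At $\bar x=x$ one has $f(x)=\widehat L^-(x)$, while at $\bar x=\lambda_1(\B)$ one uses $\widehat L^-(\lambda_1(\B))=1/T(\B)$ from Proposition \ref{prop43} together with \eqref{formulaL+} to obtain $f(\lambda_1(\B))=c_\B\, x^{(N+2)/2}=\widehat L^+(x)$. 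The intermediate value theorem then yields some $\bar x$ with $f(\bar x)=y$, and the corresponding set $t\,\Omega_{\bar x}$ realises $(x,y)$ as a point of $\D$.

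Once $\D=R$ is established, the remaining conclusions are routine. Closedness holds because $R$ is the intersection of the epigraph of the continuous function $\widehat L^-$ with the hypograph of the continuous function $\widehat L^+$ inside the half-plane $\{x\geq\lambda_1(\B)\}$. Convexity in the $y$-direction is tautological: vertical slices are the intervals $[\widehat L^-(x),\widehat L^+(x)]$. Convexity in the $x$-direction follows from the monotonicity of $\widehat L^\pm$: for a fixed level $y$, each of the conditions $\widehat L^-(x)\leq y$ and $\widehat L^+(x)\geq y$ defines a half-line in $x$, so their intersection is an interval. Simple connectedness follows because $R$ deformation retracts onto $\mathbb V$: send $(x,y)$ first vertically down to $(x,\widehat L^-(x))\in\widehat\Gamma^-$ and then along $\widehat\Gamma^-$ back to the vertex $\mathbb V$, which joins every point of $\widehat\Gamma^-$ by a continuous arc.

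The main delicate point is the scanning step, specifically checking that the Kohler-Jobin competitor $t\,\Omega_{\bar x}$ really satisfies the weak volume constraint for every $\bar x\in[\lambda_1(\B),x]$, which is guaranteed precisely by $t=\sqrt{\bar x/x}\leq 1$, and ensuring that the limit case $\bar x=\lambda_1(\B)$ is handled correctly: there, Faber--Krahn forces $\Omega_{\bar x}=\B$, so that $t\,\Omega_{\bar x}$ is nothing but the homothetic ball sitting on the Kohler-Jobin curve $\Gamma_\B$ and realising $\widehat L^+(x)$.
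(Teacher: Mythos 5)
Your proof is correct, and it rests on the same underlying mechanism as the paper's: the region between $\Gamma_{\mathbb B}$ and $\widehat\Gamma^-$ is swept out by Kohler--Jobin homothety curves emanating from points of the lower boundary $\widehat\Gamma^-$. The difference is in how the right curve is located. The paper fixes the target point $(x_1,y_1)$, draws the Kohler--Jobin curve through it and the origin, and invokes a plane-separation argument (that $\Gamma_{\mathbb B}\cup\widehat\Gamma^-$ disconnects the plane, with $(x_1,y_1)$ and the origin on opposite sides) to force an intersection with $\widehat\Gamma^-$; the sweeping curve through that intersection point then passes through $(x_1,y_1)$. You instead scan $\bar x\in[\lambda_1(\B),x]$ along the lower boundary, contract the optimal set $\Omega_{\bar x}$ by $t=\sqrt{\bar x/x}\le 1$ so that the abscissa is pinned at $x$, and apply the intermediate value theorem to the scalar function $f(\bar x)=(x/\bar x)^{(N+2)/2}\widehat L^-(\bar x)$, whose endpoint values $\widehat L^-(x)$ and $\widehat L^+(x)$ you compute correctly from Propositions \ref{prop-L2} and \ref{prop43} and formula \eqref{formulaL+}. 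This buys you something: the one-dimensional IVT replaces the topological separation claim, which in the paper is asserted rather than proved and would itself require a small point-set argument; your version is entirely self-contained once continuity of $\widehat L^-$ is known, and it makes the verification of the weak volume constraint ($t\le 1$) completely explicit. The remaining deductions (closedness from continuity of $\widehat L^\pm$, vertical convexity as a tautology, horizontal convexity from monotonicity of $\widehat L^\pm$, simple connectedness via the two-stage deformation retraction onto $\mathbb V$) are all sound and match the spirit of the paper's conclusion, with the minor remark that the paper obtains closedness directly from Lemma \ref{lemma.h} rather than from the identification $\D=R$.
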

\begin{proof} The closure of $\D$ is a consequence of Lemma~\ref{lemma.h}. As for the simple connectedness, it is enough to show that $\D$ coincides with the region between $\Gamma_{\mathbb B}$ and $\widehat\Gamma^-$. Let $(x_1,y_1)$ be a point lying between the two curves, namely such that
$$
x_1>\lambda_1(\B)\quad \hbox{and}\quad     \widehat L^-(x_1) < y_1 <  \widehat L^+(x_1),
$$
where $\widehat L^\pm$ are the functions defined in \eqref{Lphat} and \eqref{Lmhat}. The equality cases, corresponding to the upper and lower curves, have already been treated in the previous paragraph. Since $\widehat \Gamma^{-}\cup \Gamma_{\mathbb B}$ disconnects the plane into two parts, the former containing $(x_1,y_1)$, the latter containing the origin, we infer that any curve connecting these two points must intersect $\Gamma_{\mathbb B}$ or $\widehat \Gamma^{-}$. In particular, the curve $\Gamma_1\coloneqq\{y= c_1\,x^{(N+2)/N}\}$, with $c_1\coloneqq y_1/x_1^{(N+2)/N}$, which passes through the origin and $(x_1,y_1)$, has to intersect $\widehat\Gamma^{-}$ at some $(x_2,y_2)$, with $\lambda_1(\B)< x_2 < x_1$ and $y_2=c_1\, x_2^{(N+2)/N}$. Since $\widehat\Gamma^-$ is contained into the diagram, we infer that also $(x_2,y_2)\in \D$. As already noticed in \S \ref{sec-CSS}, the whole arc $\Gamma_2\coloneqq \{y=c_2 \, x^{(N+2)/N}\ :\ x\geq x_2\}$, with $c_2\coloneqq y_2/x_2^{(N+2)/N}$, is contained into $\D$. Since $(x_2,y_2)\in \Gamma_1\cap \Gamma_2$, it is immediate to check that $c_1=c_2$, namely $\Gamma_1$ and $\Gamma_2$ are actually the same (more precisely, $\Gamma_2$ is a portion of $\Gamma_1$). In particular, the point $(x_1,y_1)\in \D$. The procedure is also described in Fig. \ref{figura2}. This gives the simple connectedness. Actually, we have proved a stronger fact: for every $x_1$ as above, the whole segment $\{x_1\}\times[\widehat{L}^-(x_1),\widehat{L}^+(x_1)]$ is contained into the diagram, namely $\D$ is vertically convex; moreover, the same reasoning applies in the horizontal direction, concluding the proof.
\end{proof}

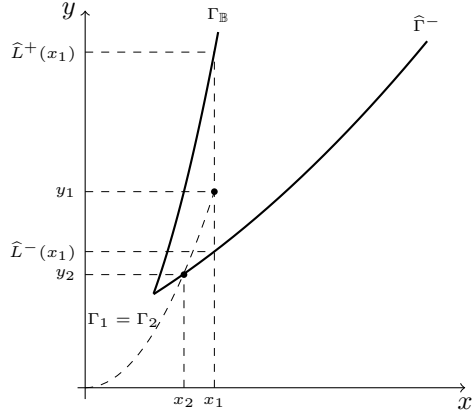
\begin{figure}[t]                                         
\begin{tikzpicture}[domain=0:10, scale=0.5]   

  \draw[->] (-0.2,0) -- (10,0) node[below] {$x$};
  \draw[->] (0,-0.3) -- (0,10) node[left] {$y$};
  \draw[thick,domain=1.8:9]  plot (\x,{(0.2*\x)^2 + 0.5*(\x)+1.45}) node[above] {\tiny $\widehat\Gamma^-$};
  \draw[thick,domain=1.8:3.5]  plot (\x,{(0.77*(\x)^2)}) node[above] {\tiny $\Gamma_{\mathbb B}$};
  
    \fill[black] (3.4,5.2) circle (2.5pt);
    \draw[-, dashed, very thin, domain=0:3.4]  plot (\x,{0.45*(\x)^2}) ;
    \node at (1,1.8) {\tiny $\Gamma_1=\Gamma_2$};
    \draw[-, dashed, very thin] (3.4,0) node[below] {\tiny $x_1$} -- (3.4,8.9) ;
    \draw[-, dashed, very thin] (0,5.2) node[left] {\tiny $y_1$} -- (3.4,5.2) ;
    \draw[-, dashed, very thin] (0,8.9) node[left] {\tiny $\widehat L^+(x_1)$} -- (3.4,8.9) ;
    \draw[-, dashed, very thin] (0,3.61) node[left] {\tiny $\widehat L^-(x_1)$} -- (3.4,3.61) ;
    \draw[-, dashed, very thin] (2.6,0) node[below] {\tiny $x_2$} -- (2.6,3) ;
    \draw[-, dashed, very thin] (0,3) node[left] {\tiny $y_2$} -- (2.6,3) ;
    \fill[black] (2.6,3) circle (2.5pt);
\end{tikzpicture}
\caption{{The construction of $\Gamma_1$ and $\Gamma_2$ in the proof of  Proposition \ref{prop-Dcvx}.}}\label{figura2}
\end{figure}

\subsection{Back to the upper and lower boundaries}\label{back}

In the next propositions we shed some light on the relationship between $\widehat L^-$ and $L^-$.

\begin{proposition}\label{prop-L3} The two functions $\widehat L^-$ and $L^-$ coincide on a closed unbounded set. Moreover, on {the} complement $\widehat L^-$ is a Kohler-Jobin type curve.
\end{proposition}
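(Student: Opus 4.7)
The plan is to analyse the coincidence set $S := \{x \geq \lambda_1(\mathbb B) : \widehat L^-(x) = L^-(x)\}$, which is the set referred to in the statement (the inequality $\widehat L^- \leq L^-$ being trivial, since unit-volume sets are admissible for the weak constraint). First I would verify that $S$ is closed by a direct semicontinuity argument: for $x_n \in S$ with $x_n\to x$, the continuity of $\widehat L^-$ (Proposition~\ref{prop-L2}) and the lower semicontinuity of $L^-$ (Proposition~\ref{prop-Lmeno}) yield $\widehat L^-(x)=\lim_n \widehat L^-(x_n)=\lim_n L^-(x_n)\geq L^-(x)\geq \widehat L^-(x)$, hence $x\in S$.

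The key conceptual step, and the one I expect to be the main obstacle, is a scale-invariant reformulation of $\widehat L^-$. Observing that every $\Omega$ admissible for $\widehat L^-(x)$ rescales to $\tilde\Omega:=|\Omega|^{-1/N}\Omega\in\mathcal A$ with $\lambda_1(\tilde\Omega)\in[\lambda_1(\mathbb B),x]$, and that $T(\Omega)^{-1}=c(\tilde\Omega)\,x^{(N+2)/2}$ with $c(\cdot):=T(\cdot)^{-1}\lambda_1(\cdot)^{-(N+2)/2}$ scale-invariant, I would set $\phi(x):=\widehat L^-(x)/x^{(N+2)/2}$ and $c^*(y):=L^-(y)/y^{(N+2)/2}$, obtaining the identity
\[
\phi(x) \;=\; \min_{y\in[\lambda_1(\mathbb B),\, x]} c^*(y).
\]
In particular $\phi$ is continuous and non-increasing, satisfies $\phi \leq c^*$, and $x \in S$ if and only if the above minimum is attained at $y=x$. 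Once this representation is in hand, the remaining arguments are elementary optimisation on $\mathbb R$.

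From the identity I would show that, on each connected component $(a,b)$ of the open set $S^c$, $\phi$ is constant. Indeed $a\in S$ by closedness; for $x_0\in(a,b)$ the minimum above is attained at some $y_0<x_0$, and combining $c^*(y_0)=\phi(x_0)$ with $\phi(y_0)\leq c^*(y_0)$ and monotonicity of $\phi$ forces $\phi(y_0)=c^*(y_0)$, i.e., $y_0\in S$, so $y_0\leq a$. Monotonicity then gives $\phi\equiv \phi(x_0)$ on $[y_0,x_0]\supseteq[a,x_0]$; varying $x_0$ and using the continuity of $\phi$, I would conclude $\phi\equiv c:=c^*(a)$ on $[a,b]$, so that $\widehat L^-(x)=c\,x^{(N+2)/2}$ on $(a,b)$, a Kohler-Jobin type curve as in Remark~\ref{rem-KJ}.

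Finally, unboundedness of $S$ I would prove by contradiction. If $S\subset[\lambda_1(\mathbb B),M]$ for some $M<\infty$, then $(M,\infty)$ lies in a single connected component of $S^c$, on which $\widehat L^-(x)=c\,x^{(N+2)/2}$ for some constant $c$. The bound $\widehat L^-\leq \widehat L^+=c_{\mathbb B}\,x^{(N+2)/2}$ gives $c\leq c_{\mathbb B}$, while the Kohler-Jobin inequality~\eqref{KJ} applied to any optimiser $\Omega_x$ of $\widehat L^-(x)$ gives $c=c(\Omega_x)\geq c_{\mathbb B}$. Hence $c=c_{\mathbb B}$, so $\widehat L^-=\widehat L^+$ on $(M,\infty)$, contradicting Proposition~\ref{prop43}.
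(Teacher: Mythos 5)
Your reduction of $\widehat L^-$ to the one-dimensional problem $\phi(x)=\min_{y\in[\lambda_1(\mathbb B),x]}c^*(y)$ is correct and is a genuinely different (and arguably cleaner) route than the paper's, which argues geometrically with the dilation curves $\Gamma_{\Omega}(t_1,t_2)$: your Step on the constancy of $\phi$ on components of $S^c$ replaces the paper's ``Step 1'' (all admissible dilations of a $\widehat\Gamma^-$-optimal set of volume $s<1$ stay on $\widehat\Gamma^-$), and your closedness argument via semicontinuity is fine. However, the unboundedness step contains a genuine error: you invoke Kohler--Jobin to claim $c(\Omega_x)\geq c_{\mathbb B}$ for an optimiser $\Omega_x$, but \eqref{KJ} gives the \emph{opposite} inequality. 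Indeed, $T(\Omega)^{2/(N+2)}\lambda_1(\Omega)\geq T(\mathbb B)^{2/(N+2)}\lambda_1(\mathbb B)$ is equivalent to
\[
c(\Omega)=\frac{1}{T(\Omega)\,\lambda_1(\Omega)^{(N+2)/2}}\;\leq\; c_{\mathbb B},
\]
for \emph{every} $\Omega$ --- this is precisely why $\Gamma_{\mathbb B}$ is the \emph{upper} boundary of $\mathcal D$. So your two inequalities both point the same way ($c\leq c_{\mathbb B}$), the conclusion $c=c_{\mathbb B}$ does not follow, and the contradiction with Proposition~\ref{prop43} evaporates.

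The step is easily repaired, and the repair is exactly the paper's ``Step 2'': if $S$ were bounded, then on the terminal component $(a,\infty)$ of $S^c$ you would have $\widehat L^-(x)=c\,x^{(N+2)/2}$ with $c=c^*(a)\geq T(\mathbb B)^{-1}\lambda_1(a)^{-(N+2)/2}>0$, hence $\widehat L^-$ superlinear at infinity; but $\widehat L^-\leq L^-\leq L^+\leq x/C_N$ by the reverse P\'olya bound \eqref{ubE}, which is linear --- a contradiction. With this substitution your argument is complete, and the rest of your proof (closedness of $S$, the scale-invariant identity for $\phi$, and the constancy of $\phi$ on components of the complement, which yields the Kohler--Jobin type behaviour of $\widehat L^-$ there) stands as a valid alternative to the paper's proof.
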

\begin{proof} 

Let $\Omega_1$ be a convex set with volume $s<1$ such that $(x_1,y_1):=(\X(\Omega_1),\Y(\Omega_1))\in \widehat\Gamma^-$. 

\emph{Step 1:} we claim that all the admissible dilations of $\Omega_1$ are associated to points of $\widehat\Gamma^-$, namely, according to the notation introduced in \S \ref{sec-CSS}, we claim that $\Gamma_{\Omega_1}(1,s^{-1/N})\subset \widehat\Gamma^-$. 
If not, there would exist $t^* \in  (1,s^{-1/N}]$ and some convex set $\Omega^*$ with volume less than or equal to 1, such that $\X(\Omega^*)=\X(t^*\Omega_1)$ and $\Y(\Omega^*)<\Y(t^*\Omega_1)$. The relationship between the coordinates of $\Omega^*$ and $t^*\Omega_1$ implies that the curve $\Gamma_{\Omega^*}(0,1)$, 
which is included in the diagram, lies (strictly) below $\Gamma_{\Omega_1}(1,s^{-1/N})$. In particular, we find a point of the diagram which has the same $x$-coordinate of $\Omega_1$, but strictly less $y$-coordinate, in contradiction with the optimality of $\Omega_1$.

\emph{Step 2:} we claim that not all the contractions of $\Omega_1$ are associated to a point of $\widehat{\Gamma}^-$. Indeed, if not, we would obtain that, for $x$ large enough, the set $\widehat\Gamma^-$ coincides with the curve $\Gamma_{\Omega_1}$. In particular, $\widehat\Gamma^-$ is superlinear, contradicting the bound \eqref{ubE}.

\emph{Step 3:} Let $I:=\{\widehat L^-=L^-\}$. The closedness of $I$ is a consequence of Lemma \ref{lemma.h}. As for the unboundedness, it follows arguing by contradiction and using Step 2. Finally, combining Step 1 and 2, we infer that $\widehat L^-$ is a Kohler-Jobin type curve on each connected component of $I^c$.
\end{proof}

Even if $L^-$ and $\widehat L^-$ a priori do not coincide, they share the same minimal slope, in the sense of \eqref{gammapm}, as we show in the following.
\begin{corollary}\label{corollario}
The boundaries of $\D$ and $\E$ have the same minimal slope $\gamma^-$ at $\mathbb V$.
\end{corollary}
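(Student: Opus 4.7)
The plan is to match the minimal slope of $\widehat L^-$ at $\mathbb V$ (defined analogously to $\gamma^-$ in \eqref{gammapm} but with $\widehat L^-$ in place of $L^-$; call it $\widehat\gamma^-$) with $\gamma^-$. One direction is essentially free: since $\widehat L^- \leq L^-$ on $[x_0, \infty)$ with equality at $x_0 := \lambda_1(\B)$, every difference quotient for $\widehat L^-$ is dominated by the corresponding one for $L^-$, and passing to $\liminf$ and then to infimum over sequences gives $\widehat\gamma^- \leq \gamma^-$.

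For the reverse inequality, given an arbitrary sequence $x_n \searrow x_0$, I will produce an auxiliary sequence $y_n \searrow x_0$ in $I := \{x \geq x_0 : \widehat L^-(x) = L^-(x)\}$ along which the slope of $L^-$ does not exceed the slope of $\widehat L^-$ at $x_n$. By Proposition \ref{prop-L3}, on each component $(a,b)$ of $[x_0,\infty) \setminus I$ the function $\widehat L^-$ is a Kohler-Jobin arc $c\, x^{(N+2)/2}$. A key preliminary observation: no such component has left endpoint $x_0$, for otherwise continuity of $\widehat L^-$ (Proposition \ref{prop-L2}) together with $\widehat L^-(x_0) = 1/T(\B)$ would force $c = c_\B$, giving $\widehat L^- = \widehat L^+$ on $(x_0, b)$ in contradiction with Proposition \ref{prop43}. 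I then set $y_n := x_n$ when $x_n \in I$, and $y_n := a_n$ (the left endpoint of the component of $I^c$ containing $x_n$) otherwise, so that $x_0 < y_n \leq x_n$, hence $y_n \to x_0^+$ with $y_n \in I$.

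The core inequality is
$$\frac{L^-(y_n) - 1/T(\B)}{y_n - x_0} \leq \frac{\widehat L^-(x_n) - 1/T(\B)}{x_n - x_0}.$$
When $x_n \in I$ both sides coincide. Otherwise, on $(a_n,b_n)$ one has $\widehat L^-(x) = c_n x^{(N+2)/2}$ with $c_n < c_\B$ by Proposition \ref{prop43}, and both sides are of the form $h(x) := [c_n x^{(N+2)/2} - c_\B x_0^{(N+2)/2}]/(x - x_0)$, evaluated at $y_n$ and $x_n$ respectively; the claim reduces to monotonicity of $h$ on $(x_0, \infty)$. Setting $g(x) := c_n x^{(N+2)/2}$, a brief calculation shows that $\phi(x) := (x - x_0)g'(x) - g(x) + c_\B x_0^{(N+2)/2}$ satisfies $\phi(x_0) = (c_\B - c_n) x_0^{(N+2)/2} > 0$ and $\phi'(x) = (x - x_0)g''(x) \geq 0$ on $[x_0, \infty)$, hence $\phi > 0$ and $h' > 0$. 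Passing to $\liminf_n$ in the core inequality, using that $L^-$ and $\widehat L^-$ agree at $y_n \in I$, and taking infimum over $x_n$, yields $\gamma^- \leq \widehat\gamma^-$.

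I expect the delicate point to be exactly the monotonicity of $h$: the secant is drawn from the external point $(x_0, 1/T(\B))$, which does not lie on the Kohler-Jobin arc $y = c_n x^{(N+2)/2}$, so the usual ``secant from a point on the graph is increasing'' property of convex functions does not apply verbatim. Proposition \ref{prop43}, through the strict inequality $c_n < c_\B$, places the external point strictly above the arc at $x_0$ and is precisely what rescues the monotonicity.
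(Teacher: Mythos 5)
Your argument is correct, but it follows a genuinely different route from the paper's. The paper never invokes Proposition \ref{prop-L3} here: it argues by contradiction at the level of sets, taking a sequence $\Omega_n$ of volume $1-\e_n$ whose difference quotients at $\mathbb V$ tend to some $\gamma<\gamma^-$, passing to the unit-volume normalizations $\widetilde\Omega_n$, and expanding the quotient via the homogeneities of $\lambda_1$ and $T$; the mediant inequality $\frac{a+b}{c+d}\geq\min\left(\frac{a}{c},\frac{b}{d}\right)$ then bounds it from below by the minimum between the difference quotient of $\widetilde\Omega_n$ (which controls $\gamma^-$) and the Kohler-Jobin slope $(N+2)/[2\lambda_1(\B)T(\B)]\geq\gamma^-$, yielding the contradiction. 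You instead lean on the structure theorem of Proposition \ref{prop-L3}: you project each point of $\widehat\Gamma^-$ back along its Kohler-Jobin arc to the left endpoint of the corresponding component of $I^c$, which lies on $\Gamma^-$, and show that the secant slope from $\mathbb V$ does not increase under this projection. Your preliminary observation that no component of $I^c$ can have left endpoint $\lambda_1(\B)$ (via continuity of $\widehat L^-$ and Proposition \ref{prop43}) is exactly what guarantees the projected points stay to the right of $\lambda_1(\B)$, and your treatment of the secant monotonicity from the external point $(\lambda_1(\B),1/T(\B))$ --- which indeed needs $c_n<c_\B$ and not merely convexity of the arc --- is sound. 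The paper's route is shorter and self-contained (it would go through even without Proposition \ref{prop-L3}), and it makes transparent why the Kohler-Jobin slope appears as the second competitor; yours is more geometric, identifying explicitly the points of $\Gamma^-$ that witness the slope $\gamma^-$, at the price of relying on the full strength of the structure of the coincidence set $\{\widehat L^-=L^-\}$.
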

\begin{proof}
Recalling the definition \eqref{gammapm} of $\gamma^-$ and exploiting the inclusion $\E\subset \D$, we infer that the minimal slope of $\widehat L^-$ at $\lambda_1(\B)$ is less than or equal to $\gamma^-$. Assume by contradiction that it is strictly less than $\gamma^-$. Therefore we may find a sequence of sets $\Omega_n$ converging to the ball, with volume $1-\e_n$ for some $\e_n \searrow 0$, and such that $(Y(\Omega_n)-Y(\B))/(X(\Omega_n)-X(\B))\to \gamma<\gamma^-$. Denoting by $\widetilde\Omega_n$ the sequence of normalized sets, by the homogeneity of $X$ and $Y$, we get
\begin{align*}
& \frac{Y(\Omega_n)-Y(\B)}{X(\Omega_n)-X(\B)} = \frac{Y((1-\e_n)^{1/N}\widetilde\Omega_n)-Y(\B)}{X((1-\e_n)^{1/N}\widetilde\Omega_n)-X(\B)} = \frac{Y(\widetilde\Omega_n) - Y(\B) + (N+2)\e_n Y(\widetilde\Omega_n) /N + o(\e_n)}{X(\widetilde\Omega_n) - X(\B) + 2\e_n X(\widetilde\Omega_n)/N + o(\e_n)}
\\
& \geq \min \left( \frac{Y(\widetilde\Omega_n) - Y(\B) }{X(\widetilde\Omega_n) - X(\B)}, \frac{(N+2)Y(\widetilde\Omega_n) + o(1)}{2 X(\widetilde\Omega_n) + o(1)}\right).
\end{align*}
Finally, passing to the limit in $n$, the left-hand side converges to $\gamma$; whereas the right-hand side is bounded from below by $\min(\gamma^-,(N+2)/[2\lambda_1(\B) T(\B)])=\gamma^-$, since $(N+2)/[2\lambda_1(\B) T(\B)]$ is the slope of the Kohler-Jobin curve $\Gamma_\B$ at $\mathbb V$ (see Remark \ref{rem-KJ}). This gives a contradiction concluding the proof.
\end{proof}

\section{Open problems}\label{sec-open}
The first natural questions which arise from our results concern the boundary of the diagram $\E$: are $\Gamma^\pm$ continuous curves? Is the bound in \eqref{eq.slopes} for $\gamma^-$ in dimension 2 optimal? For the latter, we expect that, near $\mathbb V$, the elements of $\Gamma^-$, in view of their optimality for $L^-$, converge in some {\it stronger} sense to the ball, supporting an affirmative answer. In this section we present three other open problems, that we believe to be interesting research lines.

\subsection{Optimal sets on $\Gamma^\pm$}

In order to characterize the optimal shapes on the upper and lower boundaries of $\E$, a natural idea is to write optimality conditions, enclosing the constraints into the functional, via Lagrange multipliers. If $\Omega$ is a critical shape (maximizer or minimizer) for $1/T$ under volume constraint and prescribed $\lambda_1$, say equal to $x$, there exists a Lagrange multiplier $\mu\in \mathbb R$ such that
$$
\left\{
\begin{array}{lll}
& \frac{\de}{\de V} \left(\frac{|\Omega|^2}{T(\Omega)} \right) - \mu  \frac{\de}{\de V} \left(|\Omega|\lambda_1(\Omega) - x \right) = 0
\\
& |\Omega|\lambda_1(\Omega) = x
\end{array}
\right.
$$
for every deformation $V$ which preserves convexity. Here we have denoted, for brevity, the shape derivative in direction $V$ by $\mathrm{d} / \mathrm{d}V$. In case $\Omega$ is smooth and strictly convex on some part $\gamma\subset \partial \Omega$, the deformations fields $V$ can be taken with arbitrary sign on $\gamma$. In this case, taking without loss of generality $|\Omega|=1$ and developing the computations, we get
\begin{equation}\label{Serrin1}
\left\{
\begin{array}{lll}
& |\nabla w_\Omega|^2 - \mu T(\Omega)^2 |\nabla \varphi_\Omega|^2 = 2 T(\Omega) - \mu T(\Omega )^2  \lambda_1(\Omega)  \quad \hbox{on }\gamma,
\\
& \lambda_1(\Omega) = x.
\end{array}
\right.
\end{equation}
The first optimality condition can be rephrased as follows: 
\begin{equation}\label{c-ott}
|\partial_n w_\Omega|^2 - \alpha |\partial_n \varphi_\Omega|^2 = \beta \quad  \hbox{ on }\gamma,
\end{equation}
for some $\alpha, \beta\in \mathbb R$. In other words, the torsion function and the first Dirichlet eigenfunction solve two overdetermined problems, in which the extra-condition involves both $w_\Omega$ and $\varphi_\Omega$. The natural question is: which (if it exists) smooth strictly convex domains satisfy \eqref{c-ott}? For which values $\alpha, \beta$ the sole solution is the ball? A positive answer would imply that the optimal sets different from the ball are either non smooth, or not strictly convex.
\begin{center}
{\bf Open problem 1:} characterize the optimal shapes on $\Gamma^\pm$.
\end{center}
In this respect, we have the following conjecture: the optimal sets on $\Gamma^+$ are polygons, whereas those on $\Gamma^-$ are $C^{1,1}$.

\subsection{Topology of $\E$}
In Theorem \ref{thmE} we have shown that the diagram $\E$ is connected and that the eventual holes are bounded. It would be interesting to exclude, or confirm, the presence of holes; in other words
\begin{center}
{\bf Open problem 2 :} prove or disprove the simple connectedness of $\E$.
\end{center}
Here we provide a nice tool, based on a topological argument, which supports the conjecture of simple connectedness.

To this aim, we need to introduce some notation. Given two convex sets $\Omega_1,\Omega_2$ of $\mathbb R^N$ of unit measure, we define a loop passing through the vertex as follows: first, performing an ``inverse'' continuous Steiner symmetrization, we may pass from $\B$ to $\Omega_1$; then, by applying a normalized Minkowski sum (see \eqref{mink}) we may deform in a continuous way $\Omega_1$ into $\Omega_2$; finally, again using a continuous Steiner symmetrization we may deform $\Omega_2$ into the ball $\B$. 
By composing such deformations with $(\lambda_1,T^{-1})$, we obtain three continuous paths, that can be reparametrized from $[0,1]$ to $\E$. Following the order above, we denote them by $\eta_i(\cdot)$, $i=1,2,3$, and their concatenation by $\eta(\cdot):[0,3]\to \E$. Notice that the constructed path is not unique, since the Minkowski sum is not invariant under rigid motion of sets and the sets associated to the same point of the diagram are not necessarily unique.

\begin{proposition}\label{lemmotopo} Let $\Omega_1,\Omega_2$ be convex sets of $\mathbb R^N$ of unit measure and let $\eta:[0,3]\to \E$ be the continuous closed curve constructed above. Then all the points of the plane with winding number different from zero are in the diagram.
\end{proposition}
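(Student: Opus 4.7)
The plan is to realize the loop $\eta$ as the boundary of a continuous disk sitting inside $\E$, and then to conclude by the standard winding-number argument that any point outside such a disk has zero winding number.

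First I would construct the filling disk. Let
$$
M_t:=\frac{(1-t)\Omega_1\oplus t\Omega_2}{|(1-t)\Omega_1\oplus t\Omega_2|^{1/N}},\qquad t\in[0,1],
$$
be the normalized Minkowski interpolation underlying the path $\eta_2$, and let $\phi_s$, $s\in[0,1]$, denote the continuous Steiner symmetrization of Lemma \ref{CSS-cvx}, reparametrized so that $\phi_0=\mathrm{id}$ and $\phi_1$ sends every unit-volume convex set to $\B$. The natural candidate for a continuous filling of $\eta$ is
$$
H(s,t):=\bigl(\lambda_1(\phi_s(M_t)),\,T(\phi_s(M_t))^{-1}\bigr),\qquad (s,t)\in[0,1]^2.
$$
Since $\phi_s(M_t)$ is convex with unit volume for every $(s,t)$, one has $H([0,1]^2)\subset\E$.

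Next I would identify the boundary of this disk with $\eta$. A direct inspection of the four edges of $\partial[0,1]^2$ yields: $H(0,\cdot)=\eta_2$ (the Minkowski path), $H(\cdot,1)=\eta_3$ (the Steiner path from $\Omega_2$ to $\B$), $H(\cdot,0)=\eta_1^{-1}$ (the reverse of $\eta_1$, since $s\mapsto\phi_s(\Omega_1)$ Steiner-flows from $\Omega_1$ to $\B$), and $H(1,\cdot)\equiv\mathbb V$ (constant at the vertex). Traversing $\partial[0,1]^2$ and collapsing the degenerate constant edge, one recognizes the concatenation $\eta=\eta_1\cdot\eta_2\cdot\eta_3$, up to cyclic reparametrization and orientation. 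Thus $H:[0,1]^2\to\E$ is a continuous map extending $\eta$.

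The topological conclusion is then immediate: given $p\in\mathbb R^2$ with $w(\eta,p)\neq 0$, if one had $p\notin H([0,1]^2)$, then $H$ would be a continuous map from the contractible square into $\mathbb R^2\setminus\{p\}$ whose boundary restriction is $\eta$, forcing $\eta$ to be null-homotopic in $\mathbb R^2\setminus\{p\}$ and hence $w(\eta,p)=0$. This contradiction yields $p\in H([0,1]^2)\subset\E$, which is the claim.

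The main technical obstacle is the joint Hausdorff-continuity of $(s,t)\mapsto\phi_s(M_t)$: one needs the continuous Steiner symmetrization to depend continuously on its initial datum, uniformly on the Hausdorff-precompact family $\{M_t\}_{t\in[0,1]}$. Once this is granted, continuity of the composition with $\lambda_1$ and $T^{-1}$ follows from Lemma \ref{lemma.h}, and the argument is complete.
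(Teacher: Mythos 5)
Your overall topological scheme (fill the loop by a continuous map of the square into $\E$, then use homotopy invariance of the winding number) is the same as the paper's, but your choice of filling has a genuine gap that the paper deliberately avoids. You define $H(s,t)=(\lambda_1(\phi_s(M_t)),T(\phi_s(M_t))^{-1})$, where $\phi_s$ is ``the'' continuous Steiner symmetrization applied to the Minkowski interpolant $M_t$. Lemma \ref{CSS-cvx}, however, only provides, \emph{for each fixed convex set} $\Omega$, a path $t\mapsto\phi_t(\Omega)$ ending at the ball; the construction (a concatenation of continuous Steiner symmetrizations along a sequence of directions chosen according to the set) is not a universal flow acting on all convex bodies at once. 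There is no statement — in this paper or in the references \cite{B,BH,BP} it relies on — asserting that the symmetrization depends continuously (in the Hausdorff metric, jointly in $(s,t)$) on the initial datum, and the choice of symmetrization directions needed to reach the ball genuinely varies with $M_t$. You flag this as ``the main technical obstacle,'' but it is not a technicality to be granted: it is precisely the hard step, and without it the map $H$ is not known to be continuous, so the homotopy argument does not start.

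The paper's proof sidesteps this entirely by never symmetrizing any set other than $\Omega_1$ and $\Omega_2$. Its homotopy $H(s;\cdot)$ keeps the two \emph{fixed} Steiner paths $\eta_1$ and $\eta_3$, truncates them at the sliding parameters $1-s$ and $s$, and joins the corresponding truncation points by a normalized Minkowski curve between the two underlying sets. Joint continuity then only requires that the Minkowski sum $(A,B,t)\mapsto tA\oplus(1-t)B$ be continuous in the Hausdorff metric — which is standard — together with Lemma \ref{lemma.h} for the continuity of $\lambda_1$ and $T$. At $s=1$ both endpoints are $\B$ and the Minkowski curve degenerates to the constant path at $\mathbb V$, giving the null-homotopy. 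If you want to salvage your version, you would have to either prove the uniform continuous dependence of the Steiner symmetrization on its initial datum over the compact family $\{M_t\}$, or switch to the paper's Minkowski-based filling; as written, the proposal is incomplete at its central step.
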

For the definition of the winding number of a curve around a point, see, e.g. \cite{libroAC}. Roughly speaking, our result states that if $\eta$ is a simple curve, then all the points of the bounded region enclosed by $\eta$ are in $\E$.
\begin{proof}[Proof of Proposition \ref{lemmotopo}]
Assume by contradiction that there exists $(x_1,y_1)\notin \E$ such that the winding number of $\eta$ around it is $k\neq 0$. We now introduce an auxiliary function $H$ depending on two variables, $(s,t)\in [0,1]\times[0,3]$, with values in $\E$. For every $s\in [0,1]$ we define $H(s;\cdot)$ as the concatenation of three curves:
\begin{itemize}
\item[-] for $t\in [0,1]$, $H(s;\cdot)$ is the re-parametrization of $\eta_1$ from $\eta_1(0)$ to $\eta_1(1-s)$;
\item[-] for $t\in [1,2]$, $H(s;\cdot)$ is the image of the normalized Minkowski curve from $\eta_1(1-s)$ to $\eta_3(s)$;
\item[-] for $t\in [2,3]$, $H(s;\cdot)$ is the re-parametrization of $\eta_3$ from $\eta_3(s)$ to $\eta_3(1)$.
\end{itemize}
The function $H$ is continuous in both variables. Moreover, for every $s$ fixed, it defines a closed path, which is $\eta$ for $s=0$ and the constant path $\mathbb V$ for $s=1$. Therefore, $H$ is a homotopy from $\eta$ to the constant path. Since the winding number is invariant under homotopy, and since the winding number of the constant path around $(x_1,y_1)$ is 0, we find the contradiction.
\end{proof}

We underline that the key point of the previous proof is that the continuous path $\eta$ comes from a continuous deformation of a set. This is no longer true for generic closed paths, concatenating, e.g., portions of $\Gamma^+$, a suitable Minkowski curve, and a portion of $\Gamma^-$. Notice that the monotonicity of $L^+$ is unknown. 

We conclude the paragraph by showing the result of some numerical simulations in dimension 2, performed with Matlab, which give some intuition on the diagram $\E$.

\begin{figure}[h]   
{\includegraphics[height=7truecm]                        
{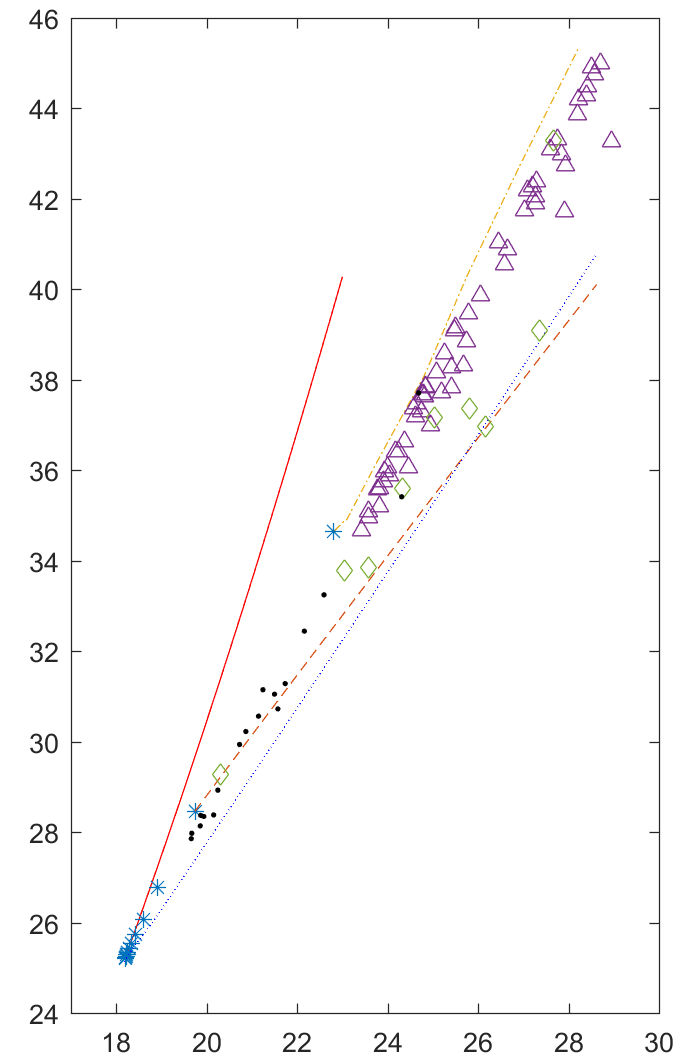}}                       \ \ \  \                         
{\includegraphics[height=7.1truecm]                         
{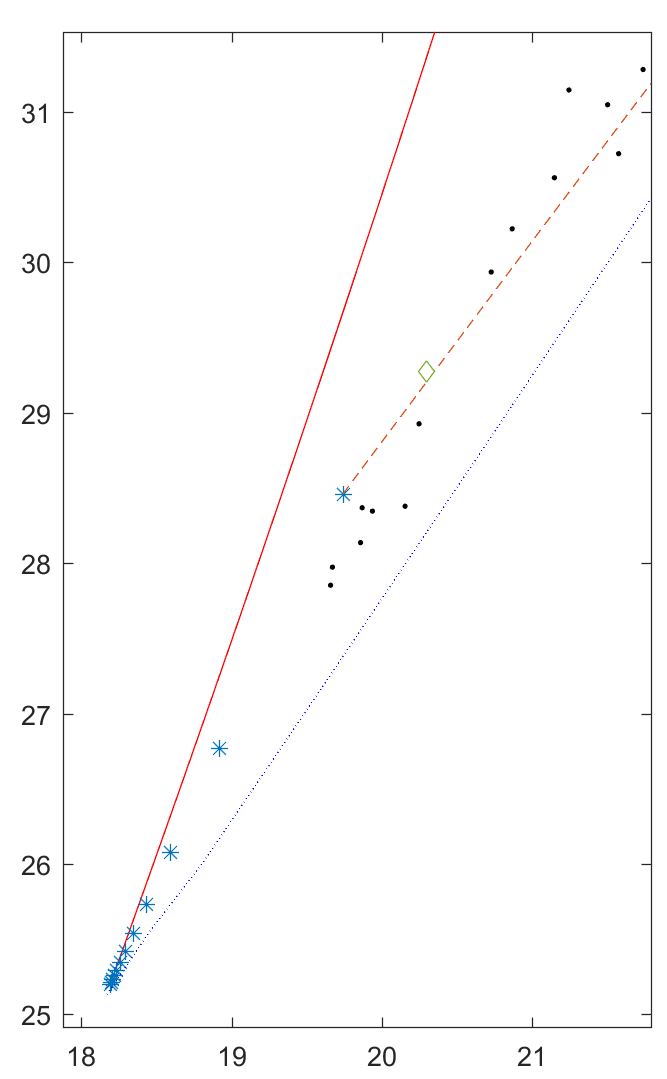}}   
\caption{{\it On the left: the continuous line is the Kohler-Jobin curve $\Gamma_\B$, the dotted line corresponds to ellipses, the dashed line to rectangles, the dotted-dashed line to isosceles triangles, the symbol $*$ is for regular polygons, $\Delta$ for random triangles, $\diamondsuit$ for random quadrilaterals, and the dots for random polygons. On the right: a zoom near the disk.}}
\end{figure}            

\subsection{Blaschke-Santal\'o diagrams on generic sets}
An interesting research line could be to remove the convexity constraint, namely to study the attainable sets  $\widetilde{\E}$ and $\widetilde{\D}$ of $(\lambda_1, T^{-1})$ among the open sets of measure equal to 1 and at most 1, respectively. Here the volume constraint is not as rigid as in the convex framework. Actually, it is easy to see that the two diagrams are essentially the same, and $\widetilde{\E}$ is dense in $\widetilde{\D}$. The Kohler-Jobin curve $\Gamma_\B$ is still an upper barrier for the diagrams, included in $\widetilde{\D}$ but not in $\widetilde{\E}$. The difficult point concerns the ``lower'' boundary. One natural question is the following: 
\begin{center}
{\bf Open problem 3:} Determine the minimal slope at $\mathbb V$ in $\widetilde{\E}$.
\end{center} 
The study carried out so far gives a partial answer: on one hand, the diagram $\widetilde\E$ is contained into the upper right quadrant delimited by $x=X(\B)$ and $y=Y(\B)$; on the other hand, it contains $\E$. Thus we immediately get $0\leq \widetilde{\gamma}^-\leq \gamma^-$. 
A natural conjecture would be that $\widetilde{\gamma}^-$ agrees with one of these two bounds. Note that the explicit value of $\gamma^-$ is not known. 
In this respect, we are able to give an upper bound for $\widetilde{\gamma}^-$ which is better (in dimension 2) than the one found in Proposition \ref{prop-valorislopes}: 
$$
\widetilde{\gamma}^-\leq \displaystyle{\frac{1}{T(\B)^2}\,\frac{w_\B(0)^2}{\varphi_\B(0)^2}}.
$$
This upper bound corresponds to the slope of the trajectory $\e \mapsto (\X(\B \setminus \overline{B}_{\e}(0)), \Y(\B \setminus \overline{B}_{\e}(0)))$ at $\e=0$, namely to a sequence of balls perforated by a vanishing smaller ball. The precise value is $4 |J_0'(j_{0,1})|^2/J_0(0)^2=4 |J_1(j_{0,1})|^2\sim 1.078$  and, as expected, since perforations break the convexity constraint, it is strictly less than the upper bound of $\gamma^-$ found in \eqref{defgammameno}, whose value is $\sim 1.4626$.

\section{Appendix}\label{sec-app}
This section is devoted to the proof of Proposition \ref{prop-ds}, namely to the computation of the second order shape derivatives of $T$ and $\lambda_1$ at $\B$ in dimension 2, with respect to deformations which  preserve convexity and keep the volume unchanged. For the formulas of shape derivatives see \cite[Chapter 5]{HP} and \cite{BFL1, BFL2,NP}. Similar computations in terms of Fourier coefficients can be found in \cite{Bogo,BHL}.

The representation \eqref{Fourier} in terms of support functions accounts for the convexity constraint. As for the volume constraint, since we perform a second order analysis, it is enough to impose that the first and second order shape derivatives of the area vanish. These imply a constraint on the Fourier coefficients. 

\begin{lemma}\label{volume} Let $V$ and $W$ be two admissible deformations in $\mathcal A$. Denote by $\alpha$ and $\beta$ be the first and second variation of the support function, defined according to \eqref{supporto}-\eqref{Fourier}. Then
\begin{equation}\label{optcond}
\int_0^{2\pi}\alpha(\theta)\de \theta=0,\quad \int_0^{2\pi} \beta(\theta)\de \theta = \frac{1}{R}\int_0^{2\pi}[\dot{\alpha}(\theta)^2 -\alpha(\theta)^2] \de \theta.
\end{equation}
\end{lemma}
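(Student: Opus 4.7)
The plan is to exploit the classical area formula for a planar convex body in terms of its support function and then simply read off the coefficients of $\eps$ and $\eps^2$ in the expansion $|\Omega_\eps|=1$. Recall that if a convex body $K\subset \mathbb R^2$ has $2\pi$-periodic support function $h$, then
\[
|K|=\frac12 \int_0^{2\pi} h(\theta)\bigl(h(\theta)+\ddot h(\theta)\bigr)\de\theta = \frac12 \int_0^{2\pi}\bigl[h(\theta)^2 - \dot h(\theta)^2\bigr]\de\theta,
\]
the second equality coming from integration by parts and periodicity. Applied to $\Omega_\eps$, whose support function is $h_\eps(\theta)=R+\eps\alpha(\theta)+\frac{\eps^2}{2}\beta(\theta)+o(\eps^2)$ by \eqref{supporto} and the convention introduced just above it, this gives an explicit Taylor expansion of $|\Omega_\eps|$ to second order in $\eps$.

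Concretely, I would first expand
\[
h_\eps^2 = R^2 + 2R\eps\,\alpha + \eps^2\bigl(R\,\beta + \alpha^2\bigr) + o(\eps^2),\qquad \dot h_\eps^2 = \eps^2 \dot\alpha^2 + o(\eps^2),
\]
and substitute into the area formula to obtain
\[
|\Omega_\eps| = \pi R^2 + R\eps \int_0^{2\pi}\!\!\alpha(\theta)\de\theta + \frac{\eps^2}{2}\int_0^{2\pi}\!\!\bigl[R\,\beta(\theta)+\alpha(\theta)^2-\dot\alpha(\theta)^2\bigr]\de\theta + o(\eps^2).
\]
Since $\pi R^2=|\B|=1$ and $V,W$ being admissible in $\mathcal A$ means $|\Omega_\eps|=1$ for all small $\eps$, the coefficients of $\eps$ and $\eps^2$ must vanish. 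Vanishing of the first-order term yields $\int_0^{2\pi}\alpha\,\de\theta=0$, and vanishing of the second-order term yields $R\int_0^{2\pi}\beta\,\de\theta = \int_0^{2\pi}[\dot\alpha^2-\alpha^2]\,\de\theta$, which are exactly the two identities in \eqref{optcond}.

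The only real point to justify carefully is that the support function of $\Omega_\eps=(I+\eps V+\tfrac{\eps^2}{2}W)(\B)$ really admits the expansion $R+\eps\alpha+\tfrac{\eps^2}{2}\beta+o(\eps^2)$ with $\alpha,\beta$ given by the normal components in \eqref{supporto}. For a strictly convex $C^2$ body such as $\B$, the supremum defining $h_{\Omega_\eps}(\theta)$ is attained at a unique point which depends smoothly on $\eps$; a standard envelope/implicit function argument shows that, at each $\theta$, the first-order correction is the normal component of $V$ at $R(\cos\theta,\sin\theta)$, i.e.\ $\alpha(\theta)$, and likewise the second-order correction is the normal component of $W$, i.e.\ $\beta(\theta)$ (the tangential components enter only at higher order because they merely reparametrise $\partial\B$). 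Granting this expansion, which is in fact the content of the framework introduced in the paragraph preceding \eqref{supporto}, the proof reduces to the elementary computation above.
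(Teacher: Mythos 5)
Your argument is correct, but it follows a genuinely different route from the paper. The paper does not use the support-function area formula at all: it starts from the general first- and second-order shape derivatives of the volume, $\mathrm{Vol}'(\B;V)=\int_{\partial\B}V\cdot n\,\de\mathcal H^1$ and $\mathrm{Vol}''(\B;V,W)=\int_{\partial\B}[\kappa(V\cdot n)^2+Z+W\cdot n]\,\de\mathcal H^1$ with the tangential correction term $Z=(D_\Gamma n\,V_\Gamma)\cdot V_\Gamma-2[\nabla_\Gamma(V\cdot n)]\cdot V_\Gamma$, sets both equal to zero, and then evaluates everything in polar coordinates using \eqref{supporto} (in particular computing $Z=-\dot\alpha^2/R$ on $\partial\B$) to land on the same two identities. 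Your route, via $|K|=\tfrac12\int_0^{2\pi}[h^2-\dot h^2]\,\de\theta$ and a direct Taylor expansion of $h_\eps=R+\eps\alpha+\tfrac{\eps^2}{2}\beta$, is more elementary and avoids the structure theorem for second shape derivatives entirely; the price is that the whole burden shifts to the validity of that expansion of the support function. There your parenthetical claim that the tangential components ``enter only at higher order'' is a little glib as a general statement — at second order the tangential part of $V$ does contribute to the support function for a generic field — but it is harmless here, because the paper's convention \eqref{supporto} prescribes the tangential components to be exactly $\dot\alpha$ and $\dot\beta$, and with that specific form one checks (e.g.\ since $\theta\mapsto p_\eps(\theta)\cdot n(\theta_0)$ has derivative $-[R+\eps(\alpha+\ddot\alpha)+\tfrac{\eps^2}{2}(\beta+\ddot\beta)]\sin(\theta-\theta_0)$, so its maximum stays at $\theta=\theta_0$ for small $\eps$) that the support function of $\Omega_\eps$ is exactly $R+\eps\alpha+\tfrac{\eps^2}{2}\beta$. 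Granting that, your coefficient extraction is clean and reproduces \eqref{optcond} exactly; the paper's approach has the advantage of generalizing to arbitrary deformation fields and higher dimensions, while yours is the more transparent computation in the planar support-function setting the paper actually works in.
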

\begin{proof}
By assumption, for every $\e$ small, the volume, denoted here by $\mathrm{Vol}$, is constant, namely $\mathrm{Vol}(\Omega_\e)=\mathrm{Vol}(\B)$. In particular, $\mathrm{Vol}'(\B;V)=\mathrm{Vol}''(\B;V,W)=0$. In view of the well known formulas for $\mathrm{Vol}'$ and $\mathrm{Vol}''$ (see for instance \cite[\S 5.9.3 and \S5.9.6]{HP}), we have
\begin{equation}\label{volumi}
\mathrm{Vol}'(\B;V)=\int_{\partial \B} V\cdot n \, \de \mathcal H^1 = 0, \quad \mathrm{Vol}''(\B;V,W)= \int_{\partial \B} [\kappa (V\cdot n)^2 + Z + W\cdot n]\, \de \mathcal H^1=0,
\end{equation}
where $\kappa$ denotes the mean curvature, here equal to $1/R$, and $Z$ is the following function, defined on $\partial \B$:
\begin{equation}\label{defZ}
Z\coloneqq   ( D_\Gamma n\, V_\Gamma ) \cdot V_\Gamma - 2 [\nabla_\Gamma (V\cdot n)] \cdot V_\Gamma.
\end{equation}
The subscript $\Gamma$ denotes the tangential component of a vector/operator: for a vector field $U$ and a function $f$ defined in the whole $\mathbb R^2$, there hold 
$$
U_\Gamma\coloneqq U -  (U\cdot n) n\,,\quad D_\Gamma U \coloneqq D U - (DU\, n)\otimes n\,,\quad \nabla_\Gamma f\coloneqq \nabla f - (\nabla f \cdot n) n,
$$
where $DU$ denotes the Jacobian matrix of $U$.  
Let us rewrite the boundary integrals in \eqref{volumi} in polar coordinates: in view of \eqref{supporto}, we have $V\cdot n = \alpha$, $V\cdot \tau =\dot{\alpha}$, and $W\cdot n = \beta$, so that \eqref{volumi} reads
\begin{equation}\label{quasi}
\int_0^{2\pi} \alpha(\theta)\de \theta = \int_0^{2\pi }[\alpha(\theta)^2 + R Z + R\beta] \de \theta =0.
\end{equation}
Choosing any extension of $n$, $\tau$, and $V$ to $\mathbb R^2$, we find $( D_\Gamma n\, V_\Gamma ) \cdot V_\Gamma=[\nabla_\Gamma (V\cdot n)] \cdot V_\Gamma=\dot{\alpha}^2/R$, so that
\begin{equation}\label{zeta}
Z(R \cos \theta, R \sin \theta)=- \frac{\dot{\alpha}^2}{R}.
\end{equation}
Inserting this expression in \eqref{quasi} we conclude the proof.
\end{proof}

\begin{proof}[Proof of Proposition \ref{prop-ds}] Throughout the proof, for brevity, we will omit the subscript $\B$ in the first eigenfunction and in the torsional rigidity, which will be denoted by $\varphi$ and $w$, respectively. The second order shape derivatives of $\lambda_1$ and $T$ at $\B$ are
\begin{align}
\lambda_1''(\B;V,W) & = \int_{\partial \B}  \left( - W\cdot n- Z  + \kappa (V\cdot n)^2 \right) |\partial_\nu \varphi|^2 \de \mathcal H^1 + 2 \int_{\partial \B}\z \partial_\nu \z  \de \mathcal H^1,   \label{Lsec}
\\
T''(\B;V,W) & = \int_{\partial \B} \left[    \left( W\cdot n + Z - \kappa (V\cdot n)^2\right)|\partial_\nu w|^2  + 2  (V\cdot n)^2 |\partial_\nu w| \right]\de \mathcal H^1 
- 2 \int_{\partial \B} v \partial_\nu v \, \de \mathcal H^1,   \label{Tsec}
\end{align}
where $\kappa$ is the curvature, $Z$ is the function introduced in \eqref{defZ}, and $\z$ and $v$ solve
\begin{equation}\label{systems}
\left\{
\begin{array}{lll}
-\Delta \z = \lambda_1(\B) \z  - \varphi \int_{\partial \B} |\partial_\nu \varphi|^2 V\cdot n\, \de \mathcal H^1 \quad & \hbox{in }\B
\\
\z = -(V\cdot n) \partial_\nu \varphi \quad  & \hbox{on }\partial \B
\\
\int_{\B} \z \varphi=0
\end{array}
\right.\qquad \left\{
\begin{array}{lll}
\Delta v = 0\quad & \hbox{in }\B
\\
v = -(V\cdot n) \partial_\nu w \quad  & \hbox{on }\partial \B.
\end{array}
\right.
\end{equation}
We recall that the torsion function of the disk $\B$ is $w=(R^2- |x|^2)/4$ so that, on the boundary, we have $|\partial_\nu w| =R/2$. Similarly, since the first eigenfunction of the Dirichlet Laplacian, normalized in $L^2$, is $
\varphi= J_0(j_{0,1} |x|/R )/|J_0'(j_{0,1})|$, we have $|\partial_\nu \varphi |=j_{0,1}/R$ on the boundary. Let us perform the change of variables in polar coordinates in the integrals above. Using the fact that $\kappa=1/R$, writing $Z$ as in \eqref{zeta}, recalling the expression \eqref{supporto} of $V$ on $\partial \B$ in terms of $\alpha$, and exploiting the conditions \eqref{optcond} on $\alpha$ and $\beta$, we obtain a first simplification:
\begin{align}
\lambda_1''(\B;V,W) & = \frac{2 j_{0,1}^2}{R^2} \int_0^{2\pi} \alpha^2 \de \theta + 2 R \int_0^{2\pi}\z \partial_\nu \z  \, \de \theta,\label{w1}
\\
T''(\B;V,W)& = \frac{R^2}{2} \int_0^{2\pi} \alpha^2\de \theta 
- 2 R \int_0^{2\pi} v \partial_\nu v \, \de \theta.\label{v1}
\end{align}
Let us now determine $\z$ in terms of $\alpha$ and of its Fourier coefficients $a_m$ and $b_m$. First, we notice that, in view of the condition $\int \alpha=0$ in \eqref{optcond}, the PDE solved by $\z$ is $-\Delta \z = \lambda_1(\B)\z$. Therefore, it is natural to look for $\z$ as a linear combination (possibly a series) of the eigenfunctions $J_m(j_{0,1} \rho/R)\cos(m\theta)$ and $J_m(j_{0,1} \rho/R)\sin(m\theta)$ associated to the eigenvalue $\lambda_1(\B)=j_{0,1}^2/R^2$, namely $\z(\rho,\theta)=\sum_{m\geq 0} [A_m \cos(m\theta) + B_m \sin(m \theta)] J_m(j_{0,1}\rho/R)$. The orthogonality condition between $\z$ and the radial function $\varphi$ gives $A_0=0$. Imposing the boundary condition $\z(R,\theta)=j_{0,1}\alpha(\theta)/R$, we get
$$
A_m=\frac{j_{0,1} a_m}{ R J_m (j_{0,1})}\,,\quad B_m=\frac{ j_{0,1}  b_m}{R J_m (j_{0,1})}\,,\quad \forall m\geq1\,.
$$
A direct computation leads to 
\begin{equation}\label{w2}
 \int_{0}^{2\pi} \z \partial_\nu \z \de \theta =   \frac{\pi j_{0,1}^3}{R^3} \sum_{m\geq 1 }\frac{J_m'(j_{0,1})}{J_m(j_{0,1})}(a_m^2 + b_m^2).
\end{equation}
By combining \eqref{w1} and \eqref{w2}, recalling that $\int_0^{2\pi} \alpha^2 = \pi \sum_{m\geq 1}(a_m^2 + b_m^2)$ and using $j_{0,1} J_1'(j_{0,1})=- J_1(j_{0,1})$, we get
$$
\lambda_1''(\B;V,W)  =  \frac{2\pi  j_{0,1}^2}{R^2}  \sum_{m\geq 2}\left[ \left( 1 +  j_{0,1}\frac{J_m'(j_{0,1})}{J_m(j_{0,1})} \right) (a_m^2+b_m^2)\right].
$$
Following the same procedure, we may derive $v$ as a function of $a_m$ and $b_m$. Formally, $v$ can be searched as the infinite sum of harmonic functions, namely $v(\rho,\theta)=\sum_{m\geq 0} [C_m \cos(m\theta) + D_m \sin(m\theta)] \rho^m$. Imposing the boundary condition we obtain
$$
C_0=D_0=0\,,\quad C_m  = \frac{a_m}{2 R^{m-1}} \,,\quad D_m =  \frac{ b_m}{2R^{m-1}}\,,\quad \forall m\geq1\,. 
$$
In particular, 
$$
\int_0^{2\pi} v \partial_\nu v \de \theta = \frac{\pi R}{4} \sum_{m\geq 1} m (a_m^2 + b_m^2),
$$
and \eqref{v1} reads
$$
T''(\B;V,W)   = \frac{\pi R^2}{2} \sum_{m\geq 2} \left[(1-m)(a_m^2 + b_m^2)   \right].
$$
This concludes the proof.
\end{proof}

\begin{remark}\label{lastremark}
At first sight, the equalities \eqref{L''}-\eqref{T''} might seem surprising, since $W$ apparently does not play any role. Actually, as it is clear from the formulas used in the previous proof, in the second order shape derivatives only the normal component of $W$ appears, averaged with $|\nabla w_\B|^2$ or $|\nabla \varphi_\B|^2$ on the boundary. Since both norms of the gradients are constant, the relevant quantity is the average of $W\cdot n$. The average is nothing but $\int \beta = c_0$, which in turn can be written in terms of $\alpha$ or $\{a_m, b_m\}$, as we have proved in Lemma \ref{optcond} and rephrased in \eqref{lastformula}.
\end{remark}

\medskip

\noindent{\bf Acknowledgements.} The authors are grateful to A. Henrot for having suggested the problem, and thank G. Buttazzo, I. Ftouhi, A. Henrot, and J. Lamboley for the fruitful discussions.
The authors are members of the Gruppo Nazionale per l'Analisi Matematica, la Probabilit\`a e le loro Applicazioni (GNAMPA) of the Istituto Nazionale di Alta Matematica (INdAM). 

The work of I.L. was partially supported by the project ANR-18-CE40-0013 SHAPO financed by the French Agence Nationale de la Recherche (ANR), and by the Ypatia Laboratory of Mathematical Sciences (LIA LYSM AMU CNRS ECM INdAM). I.L. acknowledges the Math Department of the University of Pisa for the hospitality. D.Z. acknowledges support of the Research Project INdAM for Young Researchers (Starting Grant) \emph{Optimal Shapes in Boundary Value Problems} and of the INdAM - GNAMPA Project 2018 \emph{Ottimizzazione Geometrica e Spettrale}.

\end{document}